\documentclass[10pt,oneside]{amsart}
\usepackage[a4paper, textwidth=15.6cm]{geometry}
\usepackage[T1]{fontenc}
\usepackage[english]{babel}
\usepackage{aliascnt}

\usepackage{xcolor} 
\usepackage{tikz}
\usepackage[draft=false]{hyperref} 
\usepackage{mathtools}
\usepackage{amsmath,amsthm,amssymb}
\usepackage[all,cmtip]{xy}

\usepackage[capitalise]{cleveref}

\newtheorem{theorem}{Theorem}[subsection]

\newaliascnt{lemma}{theorem}
\newtheorem{lemma}[lemma]{Lemma}
\aliascntresetthe{lemma}
\crefname{Lemma}{Lemma}{Lemmas}
\Crefname{Lemma}{Lemma}{Lemmas}

\newaliascnt{proposition}{theorem}
\newtheorem{proposition}[proposition]{Proposition}
\aliascntresetthe{proposition}
\crefname{Proposition}{Proposition}{Propositions}
\Crefname{Proposition}{Proposition}{Propositions}

\newaliascnt{corollary}{theorem}
\newtheorem{corollary}[corollary]{Corollary}
\aliascntresetthe{corollary}
\crefname{Corollary}{Corollary}{Corollaries}
\Crefname{Corollary}{Corollary}{Corollaries}

\theoremstyle{definition}
\newaliascnt{definition}{theorem}
\newtheorem{definition}[definition]{Definition}
\aliascntresetthe{definition}
\crefname{Definition}{Definition}{Definitions}
\Crefname{Definition}{Definition}{Definitions}

\newaliascnt{notation}{theorem}
\newtheorem{notation}[notation]{Notation}
\aliascntresetthe{notation}
\crefname{Notation}{notation}{Notations}
\Crefname{Notation}{notation}{Notations}

\newaliascnt{construction}{theorem}
\newtheorem{construction}[construction]{Construction}
\aliascntresetthe{construction}
\crefname{Construction}{construction}{Constructions}
\Crefname{Construction}{construction}{Constructions}

\newaliascnt{example}{theorem}
\newtheorem{example}[example]{Example}
\aliascntresetthe{example}
\crefname{Example}{example}{Examples}
\Crefname{Example}{example}{Examples}

\newaliascnt{remark}{theorem}
\newtheorem{remark}[remark]{Remark}
\aliascntresetthe{remark}
\crefname{Remark}{Remark}{Remarks}
\Crefname{Remark}{Remark}{Remarks}

\usepackage{amsmath,amsthm,amssymb,amsfonts,extarrows}
\usepackage{enumerate,amscd,amsxtra,MnSymbol}
\usepackage{mathrsfs}
\usepackage{color}
\usepackage[cmtip,all]{xy}
\usepackage{eucal}
\usepackage{bbold}
\usepackage{upgreek}
\usepackage{paralist}
\usepackage{tikz-cd}
\usepackage{dsfont}
\usepackage{bbm}
\usepackage[bbgreekl]{mathbbol}

\DeclareMathSymbol\bbDelta \mathord{bbold}{"01}
\DeclareMathSymbol\bDelta \mathord{bbold}{"01}

\newcommand{\bD}{{\mathbb D}}
\newcommand{\bE}{{\mathbb E}}

\newcommand{\bH}{{\mathbb H}}
\newcommand{\bN}{{\mathbb N}}

\newcommand{\mB}{{\mathcal B}}
\newcommand{\mC}{{\mathcal C}}
\newcommand{\mD}{{\mathcal D}}
\newcommand{\mE}{{\mathcal E}}
\newcommand{\mF}{{\mathcal F}}

\newcommand{\mJ}{{\mathcal J}}
\newcommand{\mK}{{\mathcal K}}

\newcommand{\mO}{{\mathcal O}}

\newcommand{\mQ}{{\mathcal Q}}

\newcommand{\mU}{{\mathcal U}}
\newcommand{\mV}{{\mathcal V}}
\newcommand{\mW}{{\mathcal W}}

\newcommand{\A}{{\mathrm A}}

\newcommand{\C}{{\mathrm C}}
\newcommand{\D}{{\mathrm D}}
\newcommand{\E}{{\mathrm E}}
\newcommand{\F}{{\mathrm F}}
\newcommand{\G}{{\mathrm G}}

\renewcommand{\L}{{\mathrm L}}

\newcommand{\N}{{\mathrm N}}

\newcommand{\R}{{\mathrm R}}

\newcommand{\X}{{\mathrm X}}
\newcommand{\Y}{{\mathrm Y}}

\newcommand{\ra}{\mathrm{a}}

\newcommand{\rc}{\mathrm{c}}

\newcommand{\bi}{\mathrm{i}}
\newcommand{\m}{\mathrm{m}}
\newcommand{\bk}{\mathrm{k}}

\newcommand{\rt}{\mathrm{t}}
\newcommand{\s}{\mathrm{s}}

\newcommand{\n}{\mathrm{n}}
\newcommand{\br}{\mathrm{r}}

\newcommand{\op}{\mathrm{op}}

\newcommand{\Ch}{\mathsf{Ch}}

\newcommand{\colim}{\mathrm{colim}}

\newcommand{\Mod}{{\mathrm{Mod}}}

\newcommand{\rev}{{\mathrm{rev}}}

\newcommand{\Cmon}{{\mathrm{Cmon}}}

\newcommand{\Rig}{{\mathrm{Rig}}}

\newcommand{\ot}{\otimes}

\newcommand{\co}{\mathrm{co}}

\newcommand{\strict}{\mathrm{strict}}

\newcommand{\Pre}{\mathrm{Pre}}

\newcommand{\id}{\mathrm{id}}

\newcommand{\Cat}{\mathrm{Cat}}

\newcommand{\Set}{\mathrm{Set}}
\newcommand{\sSet}{\mathrm{sSet}}

\renewcommand{\Pr}{\mathrm{Pr}}

\newcommand{\Alg}{\mathrm{Alg}}

\newcommand{\Mon}{\mathrm{Mon}}
\newcommand{\Fun}{\mathrm{Fun}}

\newcommand{\lax}{{\mathrm{lax}}}
\newcommand{\oplax}{{\mathrm{oplax}}}

\newcommand{\tu}{{\mathbb 1}}

\newcommand{\ev}{{\mathrm{ev}}}

\newcommand{\Map}{{\mathrm{Map}}}

\newcommand{\bZ}{{\mathbb{Z}}}

\newcommand{\Mor}{{\mathrm{Mor}}} 
 
\newcommand{\inj}{{\mathrm{inj}}}

\newcommand{\fin}{\mathrm{fin}}
\newcommand{\Free}{\mathrm{Free}}

\newcommand{\Steiner}{\mathrm{Steiner}}
\newcommand{\cop}{\mathrm{cop}}

\newcommand{\Ab}{{\mathrm{Ab}}} 
\newcommand{\Sp}{{\mathrm{Sp}}} 

\newcommand{\Fin}{{\mathrm{Fin}}}

\newcommand{\mon}{{\mathrm{mon}}} 
\newcommand{\Sym}{{\mathrm{Sym}}} 
\newcommand{\Strat}{{\mathrm{Strat}}} 
\newcommand{\Sing}{{\mathrm{Sing}}} 
\newcommand{\precomplicial}{{\mathrm{precomplicial}}}

\newcommand{\Grp}{{\mathrm{Grp}}}

\newcommand{\cube}{{\,\vline\negmedspace\square}}

\begin{document}

\title{Homology of higher categories}

\author{Hadrian Heine, \\ Max Planck Institute Bonn, Germany, \\ heine@mpim-bonn.mpg.de}
\maketitle

\begin{abstract}

Homology is characterized by the Eilenberg-Steenrod axioms.
We define homology of higher categories via a categorical analogue of the Eilenberg-Steenrod axioms.
We prove a categorical Dold–Kan correspondence, providing a combinatorial presentation of categorical homology in which the Street nerve plays the role of the singular complex. This implies a categorical Dold–Thom theorem
that endows categorical homology with a multiplicative structure and leads to computations of categorical homology of the globes.

\end{abstract}

\tableofcontents

\section{Introduction}

Grothendieck’s homotopy hypothesis \cite{grothendieck2022pursuing}, \cite{quillen2006homotopical} suggests that higher category theory
is a refinement of homotopy theory that captures non-invertible phenomena. 
On the other hand, homotopy theory is deeply organized by powerful computational tools in contrast to higher category theory.
In particular, there is currently no theory of homology for higher categories that plays the role of classical homology and provides the basis for a higher-categorical homological algebra.

In this article, we introduce such a theory, based on a representability principle: categorical homology theories are represented by spectra of $(\infty,\infty)$-categories, which we simply refer to as $\infty$-categories. We define categorical homology as the theory represented by the categorical analogue of the Eilenberg–Mac Lane spectrum associated to the natural numbers.

The main theorem is a categorical Dold–Kan correspondence,
which identifies categorical homology with a universal combinatorial and computable model.
This combinatorial presentation is a higher-categorical analogue of the 
classical simplicial construction of homology, in which the Street nerve plays the role of the singular complex.
All further structural properties of categorical homology follow from our main result.

A first consequence is a categorical Dold–Thom theorem, describing categorical homology in terms of strict symmetric powers. This leads to explicit computations, including the homology of the globes, the higher-dimensional walking morphisms. These computations demonstrate that categorical homology distinguishes globes of different dimension and so detects genuinely higher-categorical information invisible to classical homology.
Another direct consequence is that categorical homology is multiplicative. This
implies that the categorical homology of monoidal $\infty$-categories carries a natural multiplicative structure. 

These results demonstrate that the fundamental structural features of classical homology persist in the higher-categorical setting, while capturing inherently non-invertible phenomena.

\section{Main results}

\subsubsection*{Categorical Dold-Kan correspondence}

The singular complex encodes a space in terms of maps from standard simplices, and provides a combinatorial presentation of the space. In higher category theory, the corresponding role is played by the Street nerve.

Street \cite{street1972two} constructs for every natural $\n \geq 0$ an $n$-category $\bDelta^\n$, the oriented $\n$-simplex, that is a higher categorical refinement of the totally ordered set $[n]=\{0 < ... < n\}$, and plays the role of the topological $\n$-simplex. This construction gives rise to the Street nerve of an $\infty$-category $X$, defined by $$\N(X):= \Fun(\bDelta^\bullet, X), $$ generalizing the classical nerve of a category and playing the role of the singular complex of a space.

Unlike the classical nerve, the Street nerve is not fully faithful when regarded as a functor to simplicial sets. However, it becomes fully faithful after passing to stratified simplicial sets, simplicial sets equipped with a distinguished class of simplices \cite{verity2008complicial}.

From this perspective, stratified simplicial sets form a natural combinatorial model for $\infty$-categories \cite{verity2008complicial, verity2008weak, loubaton2022complicial} and play a role analogous to simplicial sets in classical homotopy theory.

The classical Dold–Kan correspondence identifies connective chain complexes with simplicial abelian groups. On the level of homotopy theory, this yields an equivalence between connective $H\bZ$-module spectra, modeled by connective chain complexes, and strict commutative grouplike $\mathbb{E}_\infty$-spaces, modeled by simplicial abelian groups.

We show that an analogous structural equivalence persists in the higher-categorical setting, governed by higher-categorical counterparts of $H\bZ$-modules and simplicial abelian groups.

\begin{theorem}\label{the22}(\cref{theuta2}) 
There are canonical equivalences between the following structures:

\begin{enumerate}
\item Connective left $\bH \bN$-module categorical spectra.

\item Strict symmetric monoidal $\infty$-categories.

\item Homotopy types of stratified simplicial commutative monoids.

\end{enumerate}

\end{theorem}

This extends and recovers the classical Dold Kan correspondence between connective $H \bZ$-module spectra, strict grouplike $\bE_\infty$-spaces, and homotopy types of simplicial abelian groups.

All following structural features of categorical homology are direct consequences of our main result.

\subsubsection*{Combinatorial presentation of categorical homology}

\begin{corollary}\label{the4}(\cref{commodel})
The categorical homology of an $\infty$-category $X$
is modeled by the free stratified simplicial commutative monoid $\bN[\N(X)]$ on the Street nerve of $X.$ 
	
\end{corollary}

This description is an analogue of the classical presentation of homology, in which the homology of a space $X$ is modeled by the free simplicial abelian group $\bZ[\Sing(X)]$ on its singular complex.

\subsubsection*{Categorical Dold–Thom theorem}

By the Dold-Thom theorem \cite{dold1958quasifaserungen} the reduced homology of a connected space $X$ is the homotopy of the infinite symmetric product associated to $X$.
The infinite symmetric product is the colimit of the sequential diagram of strict symmetric powers 
$$ X \to X^{\times 2}_{\Sigma_2} \to X^{\times 3}_{\Sigma_3} \to ... $$
whose transition maps use the base point of $X$.

We construct for every $\infty$-category with distinguished object $X$ an analogous sequential diagram of strict symmetric powers
\begin{equation}\label{kl}
X \to X^{\times 2}_{\Sigma_2} \to X^{\times 3}_{\Sigma_3} \to ... \end{equation} 

We identify the colimit of the sequential diagram (\ref{kl})
with the free strict symmetric monoidal $\infty$-category on the $\infty$-category $X$ with distinguished object.
Thus \cref{the22} implies the following categorical Dold-Thom theorem:

\begin{corollary}\label{klt}(\cref{free})
The categorical homology of any $\infty$-category $X$ with distinguished object is the colimit of the sequential diagram (\ref{kl}).
	
\end{corollary}

\subsubsection*{Categorical homology of the globes}

We compute the categorical homology of the globes, the walking higher morphisms:

\begin{corollary}\label{the3}(\cref{disk1}, \cref{disk2})
Let $\n \geq 0.$ 
	
\begin{enumerate}

\item The non-reduced categorical homology of $\bD^\bk$ is the coproduct $$ \coprod_{\n \geq 0} (\bD^\bk)^{\vee \n}.$$

\item The reduced categorical homology of $\bD^\bk$ is the sequential colimit of the diagram $$ \bD^0 \to \bD^\bk \to ... \to (\bD^\bk)^{\vee \n} \to ... $$

\end{enumerate}
	
\end{corollary}

In particular, categorical homology distinguishes globes of different dimensions, a phenomenon invisible to classical homology.

\subsubsection*{Categorical homology of the oriented simplices}

We obtain a model for the categorical homology of the oriented simplices:

\begin{corollary}(\cref{Orientmodel})
Let $n \geq 0.$
The free stratified simplicial commutative monoid
$\bN[\Delta^n]$ generated by the stratified simplicial set $\Delta^n$
is a model for the categorical homology of $\bDelta^n.$

\end{corollary}

\subsubsection{Multiplicativity of categorical homology}

Classical homology carries a multiplicative structure. 
Since the integers form a ring, homology is a lax symmetric monoidal functor and so takes loop spaces to graded rings.

Categorical homology carries a canonical multiplicative structure:

\begin{corollary}\label{the5} 
Categorical homology canonically refines to a monoidal functor from $\infty$-categories with distinguished objects to left $H\bN$-module categorical spectra.
	
\end{corollary}

\cref{the5} implies that categorical homology sends monoidal $\infty$-categories to categorical $H\bN$-algebras.

\cref{the5} also implies the following:

\begin{corollary}\label{homolalg}

Let $n \geq 0.$ The non-reduced categorical homology of the $n$-th oriented simplex $\bDelta^n$ refines to a coalgebra in categorical $ H(\bN)$-module spectra.

\end{corollary}

Taking the linear dual gives the following: 

\begin{corollary}\label{homolcoalg}

Let $n \geq 0.$ The non-reduced categorical cohomology of the $n$-th oriented simplex $\bDelta^n$ refines to a categorical $H(\bN)$-algebra, a categorical version of the classical cup product.

\end{corollary}

\subsubsection*{Relation to other work}

Several authors have studied categorical spectra, the analogue of spectra in higher category theory.

Kern \cite{kern2024categoricalspectrapointedinftymathbbzcategories} provides a cellular model for categorical spectra. Masuda \cite{masuda2026algebra} constructs a smash product for categorical spectra and studies dualizability and absoluteness of weighted colimits of categorical spectra. Stefanich \cite[\S 13.3]{HigherQuasi} introduces categorical spectra to study higher sheaf theory. In joint work with Stefanich, Scholze \cite{Scholze2025} 
develops a higher algebraic geometry whose central objects are a version of categorical spectra. 




\subsubsection{History of this work}

This paper was the second part of the paper \cite{heine2025categorification} on categorical homology theories.
We decided to split \cite{heine2025categorification} into two papers, one developing the foundations of stable homotopy theory of higher categories
and representing categorical homology theories by categorical spectra \cite{heine2026stable}, and this paper focusing on categorical homology.

\subsubsection*{Acknowledgements}

We thank David Gepner, Markus Spitzweck and David White for helpful discussions.

\subsubsection*{Notation and terminology}

We fix a hierarchy of Grothendieck universes whose objects we call small, large, very large, etc.
We call a space small, large, etc. if its set of path components and its homotopy groups are for any choice of base point. 

We refer to weak $(\infty,n)$-categories for $0 \leq \n \leq \infty$ as $\n$-categories and refer to weak $(\n,\n)$-categories as $(\n,\n)$-categories.
In particular, we refer to $(\infty,1)$-categories as 1-categories or simply categories.

\vspace{1mm}
We write 
\begin{itemize}
\item $\Set$ for the category of small sets.
\item $\Delta$ for (a skeleton of) the category of finite, non-empty, partially ordered sets and order preserving maps, whose objects we denote by $[n] = \{0 < ... < n\}$ for $\n \geq 0$.
\item $\infty\Grp$ for the $\infty$-category of small homotopy types.
\item $ \Cat$ for the $\infty$-category of small $\infty$-categories.	
\item $\Map_\mC(A,B)$ for the space of maps $A \to B$ in $\mC$ for any category $\mC$ containing objects $A,B.$
\item $\Fun(\mC,\mD)$ for the category of functors $\mC \to \mD$ between two categories $\mC,\mD$, the internal hom of $\Cat$ for the cartesian product.

\item We write $\ast$ for the final space.

\item We write $\bD^1$ for the walking arrow, the category with two objects and a unique non-identity arrow.

\item We write $\partial\bD^1$ and $S^0$ for the maximal subspace in $\bD^1$, the set with two elements.

\end{itemize}

\vspace{1mm}

We indicate $\infty$-categories of large objects by $\widehat{(-)}$, for example we write $\widehat{\infty\Grp}, \widehat{\Cat}$ for the $\infty$-categories of large spaces, large categories.

We often call a fully faithful functor $\mC \to \mD$ an embedding.
We call a functor $\mC \to \mD$ an inclusion if it induces an embedding on maximal subspaces and on all mapping spaces. The latter is equivalent to ask that for any category $\mB$ the induced map
$\Map_{\Cat}(\mB,\mC) \to \Map_{\Cat}(\mB,\mD)$ is an embedding.

\section{Recollection on higher categories}

In this section we recall the basic theory of (homotopy coherent) enriched categories, $\infty$-categories, and the Gray tensor product needed in our work.


\subsection{Enriched categories}

We recall the theory of (homotopy coherent) enriched categories \cite{GEPNER2015575}, \cite{Heine2023AnEB}, \cite{heine2025equivalence}, \cite{HINICH2020107129}.


For every presentably monoidal category $\mV$ there is a presentable 2-category $ \mV\mathrm{-}\Cat$ of $\mV$-enriched categories and $\mV$-enriched functors and a forgetful functor $ \iota:\mV\mathrm{-}\Cat \to \Cat$ to the presentable 2-category $\Cat$ of categories.
For $\mV= \infty\Grp$ the category of homotopy types, the forgetful functor $ \iota:\infty\Grp\mathrm{-}\Cat \to \Cat$ is an equivalence by \cite[Corollary 3.23.]{heine2024bienrichedinftycategories}.

\begin{notation}Let $\mV$ be a presentably monoidal category.
A $\mV$-enriched category $\mC$ provides an underlying category $\iota(\mC)$, for every objects $X,Y \in \iota(\mC)$ a morphism object 
$$\Mor_\mC(X, Y) \in \mV$$
and for every objects $X,Y,Z \in \iota(\mC)$ a composition morphism in $\mV:$
$$\Mor_\mC(Y, Z) \ot \Mor_\mC(X, Y) \to \Mor_\mC(X, Z).$$

We write $ X \in \mC$ for $ X \in \iota(\mC)$ and usually notationally identify $\mC$ with $\iota(\mC).$





	
\end{notation}


\begin{notation}
Let $\mV$ be a presentably monoidal category and $\mC, \mD$ be $\mV$-enriched categories. Let $ \mV \mathrm{-}\Fun(\mC,\mD)$ denote the category of $\mV$-enriched functors $\mC \to \mD$ using that $\mV \mathrm{-}\Cat$ is a 2-category.

\end{notation}

\subsubsection{Opposite enrichment}

For every enriched category there is an opposite one:

The following is \cite[Proposition 4.60.]{heine2024bienrichedinftycategories}

\begin{proposition}
There is an involution $$(-)^\circ: \mV\mathrm{-}\Cat \simeq \mV^\rev\mathrm{-}\Cat$$ forming the opposite enriched category.

For every $\mC \in \mV\mathrm{-}\Cat$ and $X,Y \in \mC$
there are canonical equivalences
$$ \iota(\mC^\circ) \simeq \iota(\mC)^\op$$
and $$ \Mor_{\mC^\circ}(X,Y) \simeq \Mor_{\mC}(Y,X).$$
\end{proposition}

\subsubsection{Transfer of enrichment}

The following is \cite[Theorem 3.67., Corollary 3.71., Corollary 3.74.]{heine2024bienrichedinftycategories}:

\begin{proposition}

Let $\mV, \mW$ be presentably monoidal categories and $\phi: \mV \to \mW$ a lax monoidal functor.

\begin{enumerate}

\item There is a canonical functor $\phi_!: \mV\mathrm{-}\Cat \to \mW\mathrm{-}\Cat$
such that for every $\mV$-enriched category $\mC$ there are a canonical essentially surjective map $\kappa: \iota(\mC) \to \iota(\phi_!(\mC)) $ and 
for every $X, Y \in \mC$ a canonical equivalence
$$ \Mor_{\phi_!(\mC)}(\kappa(X), \kappa(Y)) \simeq \Mor_\mC(X,Y).$$

\item Let $\phi: \mV \to \mW$ be a monoidal functor that admits a right adjoint $\gamma.$ There is an adjunction $\phi_!: \mV\mathrm{-}\Cat \rightleftarrows \mW\mathrm{-}\Cat: \gamma_!.$
For every presentably left $\mW$-tensored category $\mD$ the restriction of the left $\mW$-action along $\phi$ identifies with $\gamma_!(\mD).$

\vspace{1mm}

\item If $\phi: \mV \to \mW$ is a lax monoidal embedding, the functor
$\phi_!: \mV\mathrm{-}\Cat \to \mW\mathrm{-}\Cat$ is fully faithful.

\end{enumerate}

\end{proposition}

\begin{remark}\label{indubi} Let $\mU, \mV, \mW $ be presentably monoidal categories and $F: \mV \rightleftarrows \mW :G $ a left adjoint monoidal functor.
The induced left adjoint monoidal functor
$\mU \ot \mV \to \mU \ot \mW $ identifies with the left adjoint
of the induced functor
\begin{equation}\label{homok}
\Fun^R(\mU^\op, \mW) \to \Fun^R(\mU^\op,\mV).\end{equation}

So if $G: \mW \to \mV$ is fully faithful, the right adjoint functor
(\ref{homok}) is fully faithful. 
If the functor $F: \mV \to \mW$ preserves small limits, the left adjoint of (\ref{homok}) identifies with the induced functor $ \Fun^R(\mU^\op, \mV) \to \Fun^R(\mU^\op,\mW)$, which preserves small limits.
The latter is fully faithful if $F$ is fully faithful.

\end{remark}

\subsubsection{Tensors and cotensors}

\begin{definition}Let $\mV$ be a presentably monoidal category, $\mC$ a $\mV$-enriched category and $X \in \mC, V \in \mV.$	 
	
\begin{enumerate}
	
\item The tensor of $V$ and $X$ in $\mC$ is the object $V \ot X \in \mC $ such that there is a morphism
$V \to \Mor_\mC(X, V \ot X) $ in $\mV$ that induces for every $Y \in \mC$ an equivalence
$$ \Mor_\mC(V \ot X,Y) \to \Mor_\mV(V, \Mor_\mC(X,Y)). $$ 

\item The cotensor of $V$ and $X$ in $\mC$ is the object ${^V X} \in \mC $ that is the tensor of $V $ and $X$ in the opposite $\mV^\rev$-enriched category $\mC^\circ.$

\end{enumerate}
	
\end{definition}

\subsubsection{Initial and final objects}

\begin{definition}\label{initi}
Let $\mV$ be a presentably monoidal category and $\mC$ a $\mV$-enriched category.

\begin{enumerate}
\item An object $X $ of $\mC$ is initial if for every object $Y \in \mC$
the morphism object $\Mor_\mC(X,Y)$ is a final object in $\mV.$

\vspace{1mm}

\item An object $X $ of $\mC$ is final if it is initial in $\mC^\circ$ or equivalently if for every object $Y \in \mC$
the morphism object $\Mor_\mC(Y,X)$ is a final object in $\mV.$

\vspace{1mm}

\item An object $X $ of $\mC$ is a zero object if it is initial and final.

\end{enumerate}

\end{definition}

\subsubsection{Enriched adjunctions}

\begin{definition}Let $\mV$ be a presentably monoidal category.
A $\mV$-enriched functor $\mC \to \mD$ admits a left (right) adjoint if
it admits a left (right) adjoint in the 2-category $\mV \mathrm{-}\Cat.$

\end{definition}

\begin{notation}
Let $\mV$ be a presentably monoidal category and $\mC, \mD$ be $\mV$-enriched categories. Let $$ \mV \mathrm{-}\L\Fun(\mC,\mD) \subset \mV \mathrm{-}\Fun(\mC,\mD)$$ be the full subcategory of $\mV$-enriched functors $\mC \to \mD$ that admit a $\mV$-enriched right adjoint.

\end{notation}

The following is \cite[Remark 2.75.]{heine2024bienrichedinftycategories}:

\begin{proposition}

Let $\mV$ be a presentably monoidal category.

\begin{enumerate}

\item A $\mV$-enriched functor $\phi: \mC \to \mD$ admits a $\mV$-enriched right adjoint if and only if for every $\Y \in \mD$ the $\mV^\rev$-enriched functor
$\Mor_\mD(\phi(-),\X): \mC^\circ \to \mV$ is representable.

\vspace{1mm}

\item A $\mV$-enriched functor $\phi: \mC \to \mD$ admits a $\mV$-enriched left adjoint if and only if the opposite $\mV^\rev$-enriched functor $\phi^\circ: \mC^\circ \to \mD^\circ$ admits a $\mV^\rev$-enriched right adjoint. By (1) this holds if and only if for every $\Y \in \mD$ the $\mV$-enriched functor
$\Mor_\mD(\Y,\phi(-)): \mC \to \mV$ is representable.

\end{enumerate}
\end{proposition}

The following is \cite[Lemma 2.76.]{heine2024bienrichedinftycategories}:

\begin{proposition}\label{adj}\label{adj2}Let $\mV$ be a presentably monoidal category.

\begin{enumerate}
 
\item A $\mV$-enriched functor $\mC \to \mD$ admits a right adjoint if and only if it preserves tensors and the underlying functor admits a right adjoint.

\item A $\mV$-enriched functor $\mC \to \mD$ admits a left adjoint if and only if it preserves cotensors and the underlying functor admits a left adjoint.

\end{enumerate}

\end{proposition}

\subsubsection{Presentable enriched categories}

\begin{notation}
Let $\Pr^L \subset \widehat{\Cat}$ be the subcategory of presentable categories and left adjoint functors.

\end{notation}

\begin{remark}
The category $\Pr^L$ carries a canonical closed symmetric monoidal structure such that the inclusion $$\Pr^L \subset \widehat{\Cat}$$ is lax symmetric monoidal, where $\widehat{\Cat}$ carries the cartesian structure \cite[Proposition 4.8.1.15.]{lurie.higheralgebra}.

We write $\ot $ for the tensor product of $\Pr^L.$

\end{remark}

\begin{definition}\label{present} Let $\mV$ be a presentably monoidal category.
A $\mV$-enriched category $\mC$ is presentable if it admits tensors
and the underlying category $\iota(\mC)$ is presentable.
\end{definition}




\begin{notation}Let $\mV$ be a presentably monoidal category.
Let $$\mV \mathrm{-}\Pr^L \subset \mV \mathrm{-}\Cat$$ 
be the subcategory of presentable $\mV$-enriched categories and left adjoint $\mV$-enriched functors.

\end{notation}

The following is \cite[Theorem 1.2.]{Heine2023AnEB}:

\begin{theorem}Let $\mV$ be a presentably monoidal category.
There is a canonical equivalence $${_\mV\Mod(\Pr^L)} \simeq \mV \mathrm{-}\Pr^L,$$ where the left hand side is the 2-category of left $\mV$-modules in $\Pr^L$ and $\mV$-linear left adjoint functors.

\end{theorem}

The following is \cite[Corollary 3.95.]{heine2024bienrichedinftycategories}:

\begin{proposition}\label{adjuiti}Let $\mV, \mW$ be presentably monoidal categories.
There is a canonical equivalence $${_\mV\Mod(\Pr^L)_\mW} \simeq {_{\mV \ot \mW^\rev}\Mod(\Pr^L)}.$$

\end{proposition}

\begin{corollary}

Let $\mV, \mW$ be presentably monoidal categories.
There is a canonical equivalence $${_\mV\Mod(\Pr^L)_\mW} \subset {\mV \ot \mW^\rev} \mathrm{-}\Pr^L.$$

\end{corollary}

\vspace{1mm}

The following is \cite[Lemma 2.79.]{heine2024bienrichedinftycategories}:

\begin{proposition}\label{adjunc}Let $\mV$ be a presentably monoidal category.
There is a canonical equivalence $$\mV \mathrm{-}\Pr^L \simeq (\mV \mathrm{-}\Pr^R)^\op$$ sending left to right adjoints.

\end{proposition}

\subsubsection{Enriched slice categories}

Next we introduce enriched slice categories.


For the next notation we use that the presentable 2-category $\mV \mathrm{-}\Cat$ admits cotensors.

\begin{notation}\label{slices} Let $\mV$ be a presentably monoidal category
whose tensor unit is final.
Let $\mC$ be a $\mV$-enriched category and $X \in \mC.$

Let $\mC_{/X} $ be the fiber over $X$ of the $\mV$-enriched functor $\mC^{\bD^1} \to \mC^{\{1\}}$ evaluating at $1$
in the presentable 2-category $\mV \mathrm{-}\Cat.$
There is a forgetful $\mV$-enriched functor $\mC_{X/} \to \mC^{\bD^1} \to \mC^{\{0\}}$ evaluating at $0.$	
	
\end{notation}

The following is \cite[Remark 2.4.25.]{gepner2025oriented}:

\begin{remark}\label{bien} Let $\mV$ be a presentably monoidal category
whose tensor unit is final.
Let $\mC$ be a $\mV$-enriched category and $X \in \mC.$
Let $\alpha: X \to Y, \beta: X \to Z$ be morphisms in $\mC.$
There is a canonical equivalence 	
$$ \Mor_{\mC_{X/}}(Y,Z) \simeq \{\beta\} \times_{\Mor_{\mC}(X,Z)} \Mor_{\mC}(Y,Z).$$

\end{remark}

The following is \cite[Proposition 3.7.1.]{gepner2025oriented}:

\begin{lemma}\label{left adjoint slice} Let $\mV$ be a presentably monoidal category whose tensor unit is final.
Let $\mC$ be a $\mV$-enriched category and $X \in \mC.$
Every morphism $X \to Y$ in $\mC$ gives rise to a $\mV$-enriched functor
$ \mC_{Y/}  \to \mC_{X/} $ over $\mC.$
If $\mC$ admits conical pushouts, the latter admits a left adjoint.

\end{lemma}

\subsection{$\infty$-categories}

Next we recall the basic theory of $\infty$-categories following \cite[\S 3]{gepner2025oriented}. 


	


\begin{definition}
For every $\n \geq 0$ we inductively define the presentable cartesian closed category $\n\Cat$ of small (univalent) $\n$-categories by setting:
$$0\Cat :=\infty\Grp,\ (\n+1)\Cat:= \n\Cat\mathrm{-}\Cat.$$
	
\end{definition}

\begin{notation}
For every $\n \geq 0$ we inductively define colocalizations
$\n\Cat \rightleftarrows (\n+1)\Cat: \iota_\n,$
where both adjoints  preserve finite products and filtered colimits.
Let
$$0\Cat= \infty\Grp \rightleftarrows 1\Cat = \infty\Grp\mathrm{-}\Cat : \iota_0 $$ be the unique colocalization whose left adjoint preserves finite products. Let $$(\n+1)\Cat= \n\Cat\mathrm{-}\Cat \rightleftarrows (\n+2)\Cat= (\n+1)\Cat \mathrm{-}\Cat:\iota_{\n+1}:= (\iota_\n)_!. $$

\end{notation}

\begin{definition}The presentable cartesian closed category $\infty\Cat$ of small (univalent) $\infty$-categories is the limit
$$\infty\Cat:= \lim(...\xrightarrow{\iota_{\n}} \n\Cat \xrightarrow{\iota_{\n-1}} ... \xrightarrow{\iota_0} 0 \Cat) $$
of presentable cartesian closed categories and right adjoint functors. 
	
\end{definition}

Since forming category objects preserves limits, we obtain the following:

\begin{remark}\label{fix}
	
There is a canonical equivalence
$ \infty\Cat \simeq \infty\Cat\mathrm{-}\Cat. $
	
\end{remark}

\begin{notation}\label{mormoro}

Let $\Mor: \infty\Cat_{\partial\bD^1/} \to \infty\Cat$
be the functor $$\infty\Cat_{\partial\bD^1/} \simeq \Cat_{\partial\bD^1/} \times_{\Cat} \infty\Cat\mathrm{-}\Cat \to \infty\Cat $$ sending $(\mC,X,Y)$ to $\Mor_\mC(X,Y).$

\end{notation}

\begin{remark}\label{homfil}
The functor $\Mor: \infty\Cat_{\partial\bD^1/} \to \infty\Cat$ preserves small limits and small filtered colimits.

\end{remark}

\begin{notation}
For every $0 \leq \n \leq \m$ the left adjoint embeddings $\n\Cat \leftrightarrows \m\Cat$ preserve small limits and thus induce a left adjoint embedding $\n\Cat \leftrightarrows \infty\Cat: \iota_\n$ that preserves small limits and so admits a left adjoint $\tau_\n: \infty\Cat \to \n\Cat$ by presentability.

\end{notation}

By the limit definition of $\infty\Cat$ we have the following filtration \cite[Lemma 2.3.10.]{gepner2025oriented}:

\begin{lemma}\label{decom}
Let $\mC$ be an $\infty$-category.
The sequential diagram $$\iota_0(\mC) \to ... \to \iota_\n(\mC) \to \iota_{\n+1}(\mC) \to ... \to \mC$$ exhibits $\mC$ as the colimit in $\infty\Cat$ of the diagram $\iota_0(\mC) \to ... \to \iota_\n(\mC) \to \iota_{\n+1}(\mC) \to ....$
\end{lemma}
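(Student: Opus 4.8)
The plan is to exploit the two ways of reading the defining formula $\infty\Cat = \lim(\cdots \to n\Cat \to (n-1)\Cat \to \cdots)$: as a limit of presentable categories along the right adjoints, and — via the duality $\PrR \simeq (\PrL)^\op$ — as a sequential (hence filtered) colimit in $\PrL$ along their fully faithful left adjoints $q_n\colon n\Cat \to (n+1)\Cat$. First I would record the structural consequences of this dictionary. The projection $\pi_n\colon \infty\Cat \to n\Cat$ out of the limit is a morphism of $\PrR$, i.e.\ a right adjoint, and its left adjoint is precisely the embedding $\iota_n\colon n\Cat \to \infty\Cat$; since the transition maps $q_n$ are fully faithful, the structure maps $\iota_n$ of the $\PrL$-colimit are fully faithful, so the unit exhibits $\pi_n\iota_n \simeq \id$. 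The embeddings and projections are compatible with the transition maps, $\iota_{n+1}q_n \simeq \iota_n$ and $p_n\pi_{n+1} \simeq \pi_n$, where $p_n$ denotes the right adjoint of $q_n$.

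Next I would assemble the sequential diagram. Writing $\iota_n(\mC)$ for the coreflection $\iota_n\pi_n(\mC)$, the counit $\iota_n\pi_n \to \id$ supplies the cocone maps $\iota_n(\mC) \to \mC$, while applying $\iota_{n+1}$ to the counit of $q_n \dashv p_n$ and invoking the identities $\iota_{n+1}q_n \simeq \iota_n$ and $p_n\pi_{n+1} \simeq \pi_n$ produces the transition maps $\iota_n(\mC) \to \iota_{n+1}(\mC)$ compatibly over $\mC$. This is the diagram in the statement together with its cocone to $\mC$.

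The crux is then to show this cocone is a colimit cocone in $\infty\Cat$, which I would do by computing mapping spaces. Since the forgetful functor $\PrR \to \widehat{\Cat}$ preserves limits, the underlying category of $\infty\Cat$ is $\lim_n n\Cat$ in $\widehat{\Cat}$, so for any $\mD$ one has $\Map_{\infty\Cat}(\mC,\mD) \simeq \lim_n \Map_{n\Cat}(\pi_n\mC, \pi_n\mD)$. On the other hand the adjunction $\iota_n \dashv \pi_n$ gives $\Map_{\infty\Cat}(\iota_n\pi_n\mC, \mD) \simeq \Map_{n\Cat}(\pi_n\mC, \pi_n\mD)$, naturally in $n$ and $\mD$. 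Matching the two descriptions, and checking that the resulting comparison is the one induced by the counit maps, yields $\Map_{\infty\Cat}(\mC,\mD) \simeq \lim_n \Map_{\infty\Cat}(\iota_n(\mC),\mD)$, which is exactly the assertion $\mC \simeq \colim_n \iota_n(\mC)$.

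I expect the main obstacle to be keeping the two descriptions of the single category $\infty\Cat$ straight in this last step. Computing its underlying category as the naive filtered colimit of the $n\Cat$ along the $q_n$ would absurdly force every $\infty$-category to be an $n$-category for finite $n$; the filtration is a colimit in $\PrL$, not in $\widehat{\Cat}$, which is precisely why the mapping spaces must be read off from the limit ($\PrR$) presentation. Reconciling these — underlying category as a limit, filtration as a $\PrL$-colimit — and confirming that the abstract equivalence of mapping spaces is implemented by the genuine counit maps rather than merely holding objectwise, is where the real care is required.
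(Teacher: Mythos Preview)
Your argument is correct and is the standard one. The paper does not actually prove this lemma inline; it is stated as following ``by the limit definition of $\infty\Cat$'' with a citation to \cite[Lemma 2.3.10.]{GepnerHeine}. Your approach --- reading the $\Pr^R$-limit defining $\infty\Cat$ dually as a $\Pr^L$-colimit along the fully faithful left adjoints, and then verifying the colimit cocone by computing mapping spaces via the adjunctions between the projections and the embeddings --- is exactly the argument underlying that cited result. One small notational remark: in the paper $\iota_n$ denotes the right adjoint $\infty\Cat \to n\Cat$ (what you call $\pi_n$), not the embedding; your reconciliation ``$\iota_n(\mC) := \iota_n\pi_n(\mC)$'' is precisely the right way to align the two conventions.
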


\begin{notation}
	
Let $\partial\bD^1:=S^0:=*\coprod*$ be the set with two elements.	
	
\end{notation}

Next we construct the Gray tensor product.


\begin{notation}

By \cite[Definition 3.5.19.]{gepner2025oriented} for every $\n \geq 0 $ there is an $\n$-category $\cube^\n$,
the oriented $\n$-cube, whose 1-truncation is the $\n$-fold product $(\bD^1)^{\times \n}.$

Let $$\cube \subset \infty\Cat $$ be the full subcategory of oriented cubes.
	
\end{notation}

\vspace{1mm}
By \cite[Remark 3.5.18.]{gepner2025oriented} the category $\cube$ carries a monoidal structure $\boxtimes$ whose tensor unit is $\cube^0$ and such that for every $\n,\m \geq 0$ there is an equivalence $\cube^\n \boxtimes \cube^\m \simeq \cube^{\n+\m}.$

\vspace{1mm}

The next is \cite[Theorem 3.14., Example 3.16.]{campion2023gray}
(see also \cite[Corollary 3.6.2.]{gepner2025oriented}):

\begin{theorem}\label{dense}
	
There is a unique presentably monoidal structure on $\infty\Cat$ such that the embedding
$$\cube \subset \infty\Cat$$ is monoidal.

\end{theorem}

We call the monoidal structure on $\infty\Cat$ of Theorem \ref{dense}
the Gray monoidal structure and write $\boxtimes$ for the Gray tensor product.

\begin{notation}

Since the Gray tensor product defines a presentably monoidal structure on $\infty\Cat$, it is biclosed: for every $\infty$-category $\mC$ the functor $$ \mC \boxtimes (-): \infty\Cat \to \infty\Cat$$ admits a right adjoint $\Fun^\lax(\mC,-): \infty\Cat \to \infty\Cat$ and the functor $$ (-) \boxtimes \mC : \infty\Cat \to \infty\Cat$$ admits a right adjoint $ \Fun^\oplax(\mC,-): \infty\Cat \to \infty\Cat.$
\end{notation}

\begin{definition}Let $\F,\G:\mC \to \mD $ be functors of $\infty$-categories.
An (op)lax natural transformation $\F \to \G$ is a morphism in $\Fun^{(\op)\lax}(\mC,\mD)$.

\end{definition}

\begin{remark}\label{lao}

If $\mC$ is an $\n$-category and $\mD$ an $\m$-category for $\n,\m \geq 0$,
then $\mC \boxtimes \mD$ is an $\n+\m$-category.
This holds since $\n\Cat$ is closed under small colimits in $\infty\Cat$ and $\n\Cat$ is generated under small colimits by the oriented $\ell$-cubes for $1 \leq \ell \leq \n$. 
\end{remark}

The following is \cite[Remark 3.12.7.]{gepner2025oriented}:

\begin{remark}\label{grayspace}

The full subcategory $\infty\Grp \subset \infty\Cat$ is closed under the Gray tensor product and the restricted Gray monoidal structure on $\infty\Grp$ is the cartesian structure.
The left adjoint $\tau_0: \infty\Cat \to \infty\Grp$ of the resulting monoidal embedding $\infty\Grp \subset \infty\Cat$ is monoidal.

\end{remark}


The next is \cite[Proposition 3.8.8.]{gepner2025oriented}:

\begin{proposition}\label{dua}
	
There are monoidal involutions
$$(-)^\op, (-)^\co: (\infty\Cat, \boxtimes^\rev) \simeq (\infty\Cat, \boxtimes),$$
which reverse the odd dimensional morphisms, even dimensional morphisms, respectively.
	
\end{proposition}

\begin{corollary}\label{grayhoms}
Let $\mC,\mD \in \infty\Cat$.
There are canonical equivalences $$ \Fun^\oplax(\mC,\mD)^\op \simeq \Fun^\lax(\mC^\op,\mD^\op), \ \Fun^\oplax(\mC,\mD)^\co \simeq \Fun^\lax(\mC^\co,\mD^\co).$$

\end{corollary}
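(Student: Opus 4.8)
The plan is to prove both equivalences by Yoneda, reducing each to the defining adjunctions of the two internal homs together with the monoidal involutions of \cref{dua}. Recall that $\Fun^\lax(\mC,-)$ is right adjoint to $\mC \boxtimes (-)$ and that $\Fun^\oplax(\mC,-)$ is right adjoint to $(-) \boxtimes \mC$. Hence it suffices to produce, naturally in a test object $X \in \infty\Cat$, an equivalence of mapping spaces $\infty\Cat(X, \Fun^\lax(\mC^\op,\mD^\op)) \simeq \infty\Cat(X, \Fun^\oplax(\mC,\mD)^\op)$ and then invoke the Yoneda lemma in $\infty\Cat$; the analogous statement with $(-)^\op$ replaced by $(-)^\co$ handles the second equivalence.

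First I would rewrite the left-hand side using the adjunction for $\Fun^\lax$, obtaining $\infty\Cat(\mC^\op \boxtimes X, \mD^\op)$. The crucial input is that by \cref{dua} the functor $(-)^\op$ is monoidal as an equivalence $(\infty\Cat,\boxtimes)^\rev \simeq (\infty\Cat,\boxtimes)$, so it reverses tensor factors: $(A \boxtimes B)^\op \simeq B^\op \boxtimes A^\op$. Applying this with $A = X^\op$ and $B = \mC$ gives $\mC^\op \boxtimes X \simeq (X^\op \boxtimes \mC)^\op$, whence $\infty\Cat(\mC^\op \boxtimes X, \mD^\op) \simeq \infty\Cat((X^\op \boxtimes \mC)^\op, \mD^\op)$.

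Next, since $(-)^\op$ is an auto-equivalence of $\infty\Cat$ it induces equivalences on mapping spaces, namely $\infty\Cat(A,B) \simeq \infty\Cat(A^\op,B^\op)$; taking $A = X^\op \boxtimes \mC$ and $B = \mD$ turns the previous expression into $\infty\Cat(X^\op \boxtimes \mC, \mD)$. The adjunction for $\Fun^\oplax$ then rewrites this as $\infty\Cat(X^\op, \Fun^\oplax(\mC,\mD))$, and a final application of the involution on mapping spaces yields $\infty\Cat(X, \Fun^\oplax(\mC,\mD)^\op)$, as required. Each step is manifestly natural in $X$, so Yoneda delivers the first canonical equivalence; the second is proved verbatim with the monoidal involution $(-)^\co$ of \cref{dua}, which likewise satisfies $(A \boxtimes B)^\co \simeq B^\co \boxtimes A^\co$.

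I do not expect a genuine obstacle here, as the argument is formal; the one point to get right is the factor reversal encoded in \cref{dua}. It is precisely the passage through $(\infty\Cat,\boxtimes)^\rev$ that converts the right adjoint of right-tensoring, namely $\Fun^\oplax$, into the right adjoint of left-tensoring, namely $\Fun^\lax$, and this is exactly the interchange of lax and oplax transformations. Without the $\rev$ the chain of equivalences would fail to close up, so the remaining care is simply in bookkeeping the opposites and confirming naturality at every stage.
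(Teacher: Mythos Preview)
Your argument is correct and is exactly the formal derivation the paper has in mind: the corollary is stated without proof immediately after \cref{dua}, so the intended reasoning is precisely the Yoneda-plus-adjunction chain you wrote out, using that $(-)^\op$ and $(-)^\co$ are monoidal equivalences $(\infty\Cat,\boxtimes)^\rev \simeq (\infty\Cat,\boxtimes)$ and hence reverse the tensor factors. There is nothing to add.
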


Next we consider a variant of the Gray tensor product for $\infty$-categories with distinguished object that plays the role of the smash product in homotopy theory.

\begin{definition}

The Gray smash monoidal structure on $\infty\Cat_{*}$,
denoted by $\wedge$, is the smash monoidal structure of \cite[Lemma 3.69.]{gepner2025oriented}
applied to $(\infty\Cat, \boxtimes).$
For every $X, Y \in \infty\Cat_*$ the Gray smash product $X \wedge Y$ is the cofiber of the canonical functor $X \vee Y \to X \boxtimes Y. $

\end{definition}

\begin{notation}
Let $\mC,\mD \in \infty\Cat_*.$
Let $ \Fun_*^{(\op)\lax}(\mC,\mD) $ be the fiber of the functor $$ \Fun^{(\op)\lax}(\mC,\mD) \to \Fun^{(\op)\lax}(\ast,\mD)\simeq \mD $$ over the base point.

\end{notation}

\cite[Lemma 2.6.11.]{gepner2025oriented} gives the following:

\begin{lemma}
Let $\mC \in \infty\Cat_*$. 
\begin{enumerate}
\item The functor $$ \mC \wedge (-): \infty\Cat_* \to \infty\Cat_*$$ is right adjoint to the functor $\Fun_*^\lax(\mC,-): \infty\Cat_* \to \infty\Cat_*$. 

\vspace{1mm}
\item The functor $$ (-) \wedge \mC : \infty\Cat_* \to \infty\Cat_*$$ is right adjoint to the functor $\Fun_*^\oplax(\mC,-): \infty\Cat_* \to \infty\Cat_*.$

\end{enumerate}

\end{lemma}

By functoriality of the smash monoidal structure, \cref{dua} gives the following:

\begin{corollary}\label{dua2}
	
The monoidal involutions $$(-)^\op, (-)^\co: (\infty\Cat, \boxtimes)^\rev \simeq (\infty\Cat, \boxtimes)$$
of \cref{dua} induce monoidal involutions
$$(-)^\op, (-)^\co: (\infty\Cat_*, \wedge)^\rev \simeq (\infty\Cat_*, \wedge).$$
	
\end{corollary}




Next we define (reduced) categorical suspension and categorical spheres.

\vspace{1mm}

By \cite[Definition 3.4.2.]{gepner2025oriented} there is an $\infty$-category satisfying the following:  

\begin{notation}\label{suspi}

The functor $\Mor: \infty\Cat_{\partial\bD^1/} \to \infty\Cat$
of (\cref{mormoro}) admits a left adjoint $S: \infty\Cat \to \infty\Cat_{\partial\bD^1/}$, the categorical suspension.

Let $\mC$ be an $\infty$-category. The $\infty$-category $S(\mC)$ has two objects $0,1$ and morphism $\infty$-categories: $$\Mor_{S(\mC)}(1,0)\simeq \emptyset, \ \Mor_{S(\mC)}(0,1)\simeq \mC, \ \Mor_{S(\mC)}(0,0)\simeq \Mor_{S(\mC)}(1,1) \simeq *. $$ 

	
\end{notation}

	




	


	
	

The next proposition is \cite[Proposition 3.16.3.]{gepner2025oriented}:

\begin{proposition}\label{thas2}

For every $\infty$-category $\mC$ there is a canonical equivalence
$$ S(\mC) \simeq  \mC \boxtimes \bD^1 +_{ \mC \boxtimes \partial\bD^1} \partial\bD^1.$$

\end{proposition}

\begin{notation}
Let $\mC, \mD$ be $\infty$-categories and $X,Y \in \mC, X',Y' \in \mD.$
Let $$ \Fun_{\partial\bD^1/}^\oplax((\mC,X,Y), (\mD,X',Y'))$$ be the fiber over $(X',Y') \in \mD \times \mD$ of the following functor evaluating at $(X,Y) \in \mC \times \mC$:
$$ \Fun^\oplax(\mC,\mD) \to \mD \times \mD.$$ 

\end{notation}




We obtain the following immediate corollary:

\begin{corollary}\label{hom}
	
For every $\infty$-category $\mC$ and $X,Y \in \mC$ there is a canonical 
equivalence
$$ \Mor_\mC(X,Y) \simeq \{X \} \times_\mC \Fun^\oplax(\bD^1,\mC) \times_\mC \{Y\}.$$

Moreover for every $\infty$-category $\mB$ there is a canonical equivalence
$$ \Fun^\oplax(\mB, \Mor_\mC(X,Y)) \simeq \Fun^\oplax_{\partial\bD^1/}(S(\mB),\mC).$$
	
\end{corollary}




\begin{definition}Let $\n \geq 0$.
The $\n$-disk (or walking $\n$-morphism) is $\bD^\n:= S^{n}(*).$
	
\end{definition}

\begin{definition}Let $\n \geq 0$.
The boundary of the $\n$-disk is $\partial\bD^\n:= S^{n}(\emptyset).$
	
\end{definition}

\begin{remark}
The functor $\emptyset \subset *$ induces inclusions $\partial\bD^\n \subset \bD^\n$ for every $\n \geq 0.$
	
\end{remark}

\begin{example}
Then $\partial\bD^0=\ast, \partial\bD^1=S(\emptyset)=*\coprod*$ is the set with two elements.	
\end{example}
	
\begin{definition}
Let $\infty\Cat^\fin \subset \infty\Cat$ be the full subcategory generated by the disks under finite colimits. We call objects of $\infty\Cat^\fin$ finite $\infty$-categories.
	
\end{definition}	

	
	

	








\begin{definition}
The category $\infty\Cat_*$ of small $\infty$-categories with distinguished object is the full subcategory of $\Fun(\bD^1, \infty\Cat)$ spanned by the functors whose source is final.
	
\end{definition}

\begin{notation}Let $\mC\in \infty\Cat_*$.
The suspension of $\mC$ is
$$\Sigma(\mC):= S(\mC) +_{\bD^1} \bD^0.$$ 	
	
\end{notation}

	
		
	

	
	


	
	



\begin{definition}
Let $\Omega: \infty\Cat_{*} \to \infty\Cat$
be the composition $$ \infty\Cat_{*} \to \infty\Cat_{\partial\bD^1/} \xrightarrow{\Mor}\infty\Cat, \ (\mC,X) \mapsto \Mor_\mC(X,X), $$
where the first functor is induced by the functor $\partial\bD^1 \to *.$

The functor $\Omega$ preserves the final object and so lifts to a functor $\Omega: \infty\Cat_{*} \to \infty\Cat_*.$

\end{definition}

\begin{remark}

There is an adjunction $ \Sigma : \infty\Cat_* \rightleftarrows \infty\Cat_*: \Omega.$

\end{remark}

\cref{homfil} implies the following:

\begin{remark}\label{endfil}
The functor $\Omega: \infty\Cat_{*} \to \infty\Cat_*$
preserves small filtered colimits.

\end{remark}

\begin{definition}\label{conn} Let $\n \geq 0$.
An $\infty$-category $\mC$ is $\n$-connected if the underlying space of $\mC$ is $\n$-connected and all morphism $\infty$-categories are $\n$-1-connected.
Every $\infty$-category is -1-connected. 

\end{definition}

\begin{remark}
Let $\n \geq 0$. By induction an $\infty$-category is $\n$-connected if it is $\n$+1-connected.

\end{remark}




\begin{notation}
For every $\n \geq 0$ let $ \Mon_{\bE_\n}(\infty\Cat)$
be the category of $\bE_\n$-monoids \cite[Definition 5.1.0.4.]{lurie.higheralgebra} in $\infty\Cat$.

\end{notation}

\begin{remark}

By \cite[Theorem 5.1.2.2]{lurie.higheralgebra} for every $\n \geq 1$ there is a canonical equivalence $$ \Mon_{\bE_\n}(\infty\Cat) \simeq \Mon_{\bE_{\n-1}}(\Mon(\infty\Cat)).$$	

\end{remark}

The next is \cite[Corollary 4.1.24.]{heine2026stable}:

\begin{corollary}\label{deloop} Let $\n \geq 1.$ There is an adjunction
$$B^\n: \Mon_{\bE_\n}(\infty\Cat) \rightleftarrows \infty\Cat_*: \Omega^\n$$
whose right adjoint lifts $\Omega^\n: \infty\Cat_* \to \infty\Cat_*$ and whose left adjoint preserves finite products and induces an equivalence to the full subcategory of $\n-1$-connected $\infty$-categories with distinguished object.

\end{corollary}

	


Next we define categorical spheres.

\begin{definition} 
Let $\n \geq 0$. The categorical $\n$-sphere $S^\n$ is the cofiber of the functor $\partial\bD^\n \subset \bD^\n$ in $\infty\Cat$.
\end{definition}

\begin{remark}By definition the categorical $\n$-sphere is an $\n$-category with distinguished object.

\end{remark}

\begin{notation}Let $\n \geq 0$. Let $ \Free_{\bE_\n}: \infty\Cat \to \Mon_{\bE_\n}(\infty\Cat)$ be the free functor.
 
\end{notation}

The next is \cite[Proposition 4.1.29.]{heine2026stable}:

\begin{proposition}\label{freesp} Let $\n \geq 0$. There is an equivalence of $\n$-categories with distinguished object $$S^\n \simeq B^\n \Free_{\bE_\n}(*).$$ In particular, there is an equivalence $S^1 \simeq B\bN$ of categories with distinguished object. 

\end{proposition}







The next is \cite[Lemma 4.1.31.]{heine2026stable}:

\begin{lemma}\label{smarem}\label{uuupo}
Let $\mC\in \infty\Cat_*$.
There is a canonical equivalence
$$\Sigma(\mC) \simeq \mC \wedge S^1. $$ 		

\end{lemma}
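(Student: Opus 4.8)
The plan is to present both $\Sigma(\mC)$ and $\mC\wedge S^1$ as one and the same quotient of the Gray cylinder $\mC\boxtimes\bD^1$, and then to deduce the cosuspension statement from the suspension statement by transport along the involution $(-)^\co$. Write $x\colon\bD^0\to\mC$ for the distinguished object, set $A:=\mC\boxtimes\bD^1$, and let
$$W:=(\mC\boxtimes\partial\bD^1)\coprod_{\bD^0\boxtimes\partial\bD^1}(\bD^0\boxtimes\bD^1)\longrightarrow A$$
be the subobject assembled from the two ends $\mC\boxtimes\partial\bD^1$ of the cylinder together with the basepoint slice $\bD^0\boxtimes\bD^1\simeq\bD^1$, glued along their common endpoints $\bD^0\boxtimes\partial\bD^1\simeq\partial\bD^1$. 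I claim both objects in question are canonically the collapse $A\coprod_W\bD^0$.

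For $\Sigma(\mC)$ this is a direct mapping-out computation. By \cref{thas2} a functor $S(\mC)\to\mD$ is the same as a functor $A\to\mD$ whose restrictions to the two ends $\mC\boxtimes\partial\bD^1$ are constant, and the further condition imposed by the pushout $\Sigma(\mC)=S(\mC)\coprod_{\bD^1}\bD^0$ is precisely that the basepoint slice $\{x\}\boxtimes\bD^1$ be sent to the identity of the basepoint. Together these say exactly that $A\to\mD$ restricts to the basepoint on all of $W$, which is the universal property of $A\coprod_W\bD^0$.

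Next I would treat $\mC\wedge S^1$. Since $\mC\boxtimes(-)$ preserves colimits and $\bD^0$ is the Gray unit, the presentation $S^1=\bD^1\coprod_{\partial\bD^1}\bD^0$ gives $\mC\boxtimes S^1\simeq A\coprod_{\mC\boxtimes\partial\bD^1}\mC$, where the second leg is the fold map along $x\colon\bD^0\to\mC$. Unwinding the definition of the smash product as the cofiber of $\mC\vee S^1\to\mC\boxtimes S^1$, the wedge is carried to the folded copy of $\mC$ inside $\mC\boxtimes S^1$ together with the basepoint circle $\{x\}\boxtimes S^1$, whose joint preimage in the cylinder $A$ is exactly $W$. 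Collapsing this wedge forces the fold identification to become trivial, so a pasting of the iterated pushouts identifies $\mC\wedge S^1$ with $A\coprod_W\bD^0$ as well; combining the two presentations yields $\Sigma(\mC)\simeq\mC\wedge S^1$. This bookkeeping of pushouts is the one genuinely fiddly point and I expect it to be the main obstacle, since one must check that collapsing the folded copy of $\mC$ and then the residual circle $\{x\}\boxtimes S^1$ agrees with collapsing $W\subset A$ directly.

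Finally, for $\Sigma'(\mC)\simeq S^1\wedge\mC$ I would transport along the monoidal involution $(-)^\co$ of \cref{dua}. Since $(-)^\co$ is an equivalence reversing the order of the Gray tensor product, it reverses the order of the smash, giving $(S^1\wedge\mC)^\co\simeq\mC^\co\wedge(S^1)^\co$; as $S^1\simeq B\bN$ is a $1$-category and $(-)^\co$ is trivial on $1$-categories (being induced from the identity of $\mS$), we have $(S^1)^\co\simeq S^1$. Hence $(S^1\wedge\mC)^\co\simeq\mC^\co\wedge S^1\simeq\Sigma(\mC^\co)$ by the first part, and by \cref{tttj} the latter is $\Sigma'(\mC)^\co$. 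Applying the involution once more gives $S^1\wedge\mC\simeq\Sigma'(\mC)$, completing the argument.
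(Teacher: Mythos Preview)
Your argument is correct and is essentially the same pushout-pasting computation as the paper's, just organized differently: you introduce the common quotient $A\coprod_W\bD^0$ of the Gray cylinder and identify both sides with it, whereas the paper packages the identical computation into a single grid of pushout squares (with your $W$ appearing as $\mC\boxtimes\partial\bD^1\coprod_{\partial\bD^1}\bD^1$ and your intermediate collapse $A\coprod_{\mC\boxtimes\partial\bD^1}\bD^0$ appearing as $S(\mC)/\partial\bD^1$). Your derivation of the second equivalence from the first via $(-)^\co$, \cref{tttj}, and the observation $(S^1)^\co\simeq S^1$ is exactly the paper's ``via \cref{tttj} and \cref{dua}'' spelled out.
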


The next is \cite[Corollary 4.1.32.]{heine2026stable}:

\begin{corollary}\label{freespar} Let $\n \geq 0$.	
There is a canonical equivalence of $\n$-categories with distinguished object $$(-)_+\wedge (S^1)^{\wedge n} \simeq B^\n \circ \Free_{\bE_\n}.$$

\end{corollary}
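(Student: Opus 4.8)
The plan is to recognize the asserted equivalence as a naturality upgrade of \cref{freesp} (which is the case $\mA = \ast$, since $\ast_+ = S^0$ is the unit of $\wedge$), and to prove it by exhibiting both functors $(-)_+ \wedge (S^1)^{\wedge n}$ and $B^n \circ \Free_{\bE_n}$ as left adjoints of one and the same functor $\infty\Cat_* \to \infty\Cat$, so that uniqueness of adjoints forces them to be canonically equivalent. This is cleaner than trying to match the two functors on the generating disks, and it avoids any appeal to directed spheres.

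First I would assemble the right adjoint of $B^n \circ \Free_{\bE_n}$. The free functor satisfies $\Free_{\bE_n} \dashv U$ for the forgetful functor $U \colon \Mon_{\bE_n}(\infty\Cat) \to \infty\Cat$, and by \cref{deloop} we have $B^n \dashv \Omega^n$ with $\Omega^n \colon \infty\Cat_* \to \Mon_{\bE_n}(\infty\Cat)$. Composing these two adjunctions shows that $B^n \circ \Free_{\bE_n}$ is left adjoint to the functor $(\mD, X) \mapsto U\Omega^n(\mD, X)$, which by \cref{deloop} is the underlying $\infty$-category of the $n$-fold endomorphism category $\Omega^n(\mD, X)$.

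Next I would assemble the right adjoint of $(-)_+ \wedge (S^1)^{\wedge n}$. By \cref{uuupo} and associativity of $\wedge$ there is a natural equivalence $\mA_+ \wedge (S^1)^{\wedge n} \simeq \Sigma^n(\mA_+)$, so this functor factors as $\infty\Cat \xrightarrow{(-)_+} \infty\Cat_* \xrightarrow{\Sigma^n} \infty\Cat_*$. The disjoint-basepoint functor $(-)_+$ is left adjoint to the forgetful functor $V \colon \infty\Cat_* \to \infty\Cat$, while $\Sigma^n = (-) \wedge (S^1)^{\wedge n}$ is left adjoint to the internal hom $\Fun^\oplax_*((S^1)^{\wedge n}, -)$ by biclosedness of the Gray smash product. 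This internal hom is exactly the pointed $n$-fold endomorphism functor $\Omega^n \colon \infty\Cat_* \to \infty\Cat_*$, with basepoint the iterated identity of $X$: this is the pointed suspension–endomorphism adjunction, whose mapping-space incarnation is the chain of equivalences built from \cref{hom} that already appears in the proof of \cref{freesp}. Hence $(-)_+ \wedge (S^1)^{\wedge n}$ is left adjoint to $(\mD, X) \mapsto V\Omega^n(\mD, X)$.

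The crux is that these two right adjoints coincide. By \cref{deloop} the $\Mon_{\bE_n}$-valued functor $\Omega^n$ lifts the pointed endomorphism functor $\Omega^n \colon \infty\Cat_* \to \infty\Cat_*$ along $U$, so $U\Omega^n = V\Omega^n$ as functors to $\infty\Cat$, both equal to $(\mD, X) \mapsto \Omega^n(\mD, X)$ viewed as a plain $\infty$-category. By uniqueness of left adjoints, $(-)_+ \wedge (S^1)^{\wedge n}$ and $B^n \circ \Free_{\bE_n}$ are therefore canonically equivalent. The step requiring the most care is precisely this identification: one must check that the basepoint of $\Fun^\oplax_*((S^1)^{\wedge n}, -)$ matches the iterated identity that serves as the unit of the $\bE_n$-monoid $\Omega^n(\mD, X)$, since it is exactly this compatibility that makes the two right adjoints literally the same functor rather than merely objectwise equivalent, and hence lets uniqueness of adjoints deliver a canonical natural equivalence.
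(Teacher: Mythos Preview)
Your proof is correct and follows essentially the same approach as the paper: both identify $B^n \circ \Free_{\bE_n}$ and $\Sigma^n \circ (-)_+ \simeq (-)_+ \wedge (S^1)^{\wedge n}$ as left adjoints of the same functor $\infty\Cat_* \to \infty\Cat$ (the underlying $\infty$-category of $\Omega^n$), and then invoke uniqueness of left adjoints together with \cref{uuupo}. The paper's version is terser, simply noting that the $\Mon_{\bE_n}$-valued $\Omega^n$ followed by the forgetful functor factors through the pointed $\Omega^n$ followed by the forgetful functor; your careful discussion of the basepoint compatibility is not needed once one works with the factorization through $\infty\Cat_*$ rather than directly with the internal hom.
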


\cref{freesp} and \cref{freespar} imply the following:

\begin{corollary}Let $\n \geq 0$. There is a canonical equivalence of $\n$-categories $(S^1)^{\wedge \n} \simeq S^\n$.

\end{corollary}	






%

	

\subsection{Oriented categories}

In this section we consider categories enriched in the Gray tensor product, which are also studied in \cite{gepner2025oriented}. 









\begin{definition}\emph{}
\begin{enumerate}
\item An oriented category is a category enriched in $(\infty\Cat,\boxtimes^\rev).$

\item An oriented functor is a functor enriched in $(\infty\Cat,\boxtimes^\rev).$

\item An antioriented category is a category enriched in $(\infty\Cat,\boxtimes).$

\item An antioriented functor is a functor enriched in $(\infty\Cat,\boxtimes).$

\item A bioriented category is a category enriched in $(\infty\Cat,\boxtimes) \ot (\infty\Cat,\boxtimes^\rev).$

\item A bioriented functor is a functor enriched in $(\infty\Cat,\boxtimes) \ot (\infty\Cat,\boxtimes^\rev).$

\end{enumerate}

\end{definition}

\begin{example}

The Gray monoidal structure on $\infty\Cat$ is biclosed and so exhibits $\infty\Cat$ as enriched in $(\infty\Cat,\boxtimes) \ot (\infty\Cat,\boxtimes^\rev).$ This way we see
$\infty\Cat$ as a large bioriented category, which we denote by the same name.

\end{example}	

\begin{notation}\emph{}

\begin{itemize}

\item Let $$\Cat\boxtimes:= (\infty\Cat,\boxtimes^\rev)\mathrm{-}\Cat $$
be the 2-category of oriented categories.

\item Let $$\boxtimes\Cat:= (\infty\Cat,\boxtimes)\mathrm{-}\Cat$$
be the 2-category of antioriented categories.

\item Let $$\boxtimes\Cat\boxtimes:= (\infty\Cat,\boxtimes) \ot (\infty\Cat,\boxtimes^\rev) \mathrm{-}\Cat$$
be the 2-category of bioriented categories.

\end{itemize}
\end{notation}

\begin{remark}
By \cref{indubi} the unique left adjoint monoidal embedding
$(\infty\Grp, \times) \to (\infty\Cat, \boxtimes), $ which preserves small limits, induces left adjoint monoidal embeddings
$$ (\infty\Cat, \boxtimes) \ot (\infty\Grp, \times) \to (\infty\Cat, \boxtimes) \ot (\infty\Cat, \boxtimes^\rev),$$$$ (\infty\Grp, \times) \ot (\infty\Cat, \boxtimes^\rev) \to (\infty\Cat, \boxtimes) \ot (\infty\Cat, \boxtimes^\rev),$$
which preserve small limits.
The latter induce left adjoint embeddings of 2-categories
$$ \Cat\boxtimes \subset {\boxtimes\Cat\boxtimes}, \boxtimes\Cat \subset {\boxtimes\Cat\boxtimes}, $$
which preserve small limits.

\end{remark}

\begin{notation}\emph{}

\begin{itemize}
\item Let $\mC$ be an oriented category and $X,Y \in \mC$.
We write $\R\Mor_\mC(X,Y)$ for the $\infty$-category of morphisms $X \to Y$ in $\mC$.

\item Let $\mC$ be an antioriented category and $X,Y \in \mC$.
We write $\L\Mor_\mC(X,Y)$ for the $\infty$-category of morphisms $X \to Y$ in $\mC$.

\item Let $\mC$ be a bioriented category and $X,Y \in \mC$.
We write $\L\Mor_\mC(X,Y)$ for the $\infty$-category of morphisms $X \to Y$ in the underlying antioriented category of $\mC$ and 
$\R\Mor_\mC(X,Y)$ for the $\infty$-category of morphisms $X \to Y$ in the underlying oriented category of $\mC$.
    
\end{itemize}
    
\end{notation}

We refer to adjunctions of oriented, antioriented, bioriented categories as oriented, antioriented, bioriented adjunctions.









We are mainly interested in the reduced version:

\begin{definition}\emph{}
\begin{enumerate}

\item A weakly reduced oriented category is a category enriched in $(\infty\Cat_*,\wedge^\rev).$

\item A weakly reduced oriented functor is a functor enriched in $(\infty\Cat_*,\wedge^\rev).$

\item A weakly reduced antioriented category is a category enriched in $(\infty\Cat_*,\wedge).$

\item A weakly reduced antioriented functor is a functor enriched in $(\infty\Cat_*,\wedge).$

\item A weakly reduced bioriented category is a category enriched in $(\infty\Cat_*,\wedge) \ot (\infty\Cat_*,\wedge^\rev).$

\item A weakly reduced bioriented functor is a functor enriched in $(\infty\Cat_*,\wedge) \ot (\infty\Cat_*,\wedge^\rev).$

\end{enumerate}

\end{definition}










		 

		
			

		
		
	

\begin{example}

The Gray smash monoidal structure on $\infty\Cat_*$ is biclosed and so exhibits $\infty\Cat_*$ as enriched in $(\infty\Cat_*,\wedge) \ot (\infty\Cat_*,\wedge^\rev).$ This way we see
$\infty\Cat_*$ as a large weakly reduced bioriented category, which we denote by the same name.

\end{example}	

\begin{notation}\emph{}

\begin{itemize}

\item Let $$\Cat\wedge:= (\infty\Cat,\wedge^\rev)\mathrm{-}\Cat$$
be the 2-category of weakly reduced oriented categories.	

\item Let $$\wedge\Cat:= (\infty\Cat,\wedge)\mathrm{-}\Cat$$
be the 2-category of weakly reduced antioriented categories.

\item Let $$\wedge\Cat\wedge:= (\infty\Cat,\smash) \ot (\infty\Cat,\wedge^\rev) \mathrm{-}\Cat$$
be the 2-category of weakly reduced bioriented categories.

\end{itemize}
\end{notation}

\begin{remark}

By \cref{indubi} the unique left adjoint monoidal embedding
$(\infty\Grp, \times) \to (\infty\Cat, \wedge), $ which preserves small limits, induces left adjoint monoidal embeddings
$$ (\infty\Cat, \wedge) \ot (\infty\Grp, \times) \to (\infty\Cat, \wedge) \ot (\infty\Cat, \wedge^\rev),$$$$ (\infty\Grp, \times) \ot (\infty\Cat, \wedge^\rev) \to (\infty\Cat, \wedge) \ot (\infty\Cat, \wedge^\rev),$$
which preserve small limits.
The latter induce left adjoint embeddings of 2-categories
$$ \Cat\wedge \subset {\wedge\Cat\wedge}, \wedge\Cat \subset {\wedge\Cat\wedge}, $$
which preserve small limits.

\end{remark}

We refer to adjunctions of weakly reduced oriented, antioriented, bioriented categories as weakly reduced oriented, antioriented, bioriented adjunctions.

\begin{notation}We fix the following notation:

\begin{itemize}
\item Let $\mC$ be an antioriented category, $X \in \mC$ and $K \in \infty\Cat.$
We refer to tensors of $K$ and $X$ by left tensors and write $K \ot X$
for the left tensor of $K$ and $X$.
We refer to cotensors of $K$ and $X$ by left cotensors and write ${^K X}$
for the left cotensor of $K$ and $X$.

\item Let $\mC$ be an oriented category, $X \in \mC$ and $K \in \infty\Cat.$
We refer to tensors of $K$ and $X$ by right tensors and write $X \ot K$
for the right tensor of $K$ and $X$.
We refer to cotensors of $K$ and $X$ by right cotensors and write ${X^K}$
for the right cotensor of $K$ and $X$.

\item Let $\mC$ be a bioriented category, $X \in \mC$ and $K \in \infty\Cat.$
We refer to tensors of $K \ot * $ and $X$ by left tensors and write $K \ot X$
for the left tensor of $K$ and $X$.
We refer to tensors of $* \ot K$ and $X$ by right tensors and write $X \ot K$
for the right tensor of $K$ and $X$.
We refer to cotensors of $K \ot *$ and $X$ by left cotensors and write ${^K X}$
for the left cotensor of $K$ and $X$.
We refer to cotensors of $* \ot K$ and $X$ by right cotensors and write ${X^K}$
for the right cotensor of $K$ and $X$.


\item Let $\mC$ be a weakly reduced antioriented category, $X \in \mC$ and $K \in \infty\Cat_*.$
We refer to tensors of $K$ and $X$ by left tensors and write $K \wedge X$
for the left tensor of $K$ and $X$.
We refer to cotensors of $K$ and $X$ by left cotensors and write ${^K X_*}$
for the left cotensor of $K$ and $X$.

\item Let $\mC$ be a weakly reduced oriented category, $X \in \mC$ and $K \in \infty\Cat_*.$
We refer to tensors of $K$ and $X$ by right tensors and write $X \wedge K$
for the right tensor of $K$ and $X$.
We refer to cotensors of $K$ and $X$ by right cotensors and write ${X^K_*}$ for the right cotensor of $K$ and $X$.

\item Let $\mC$ be a weakly reduced bioriented category, $X \in \mC$ and $K \in \infty\Cat_*.$
We refer to tensors of $K \ot * $ and $X$ by left tensors and write $K \wedge X$
for the left tensor of $K$ and $X$.
We refer to tensors of $* \ot K$ and $X$ by right tensors and write $X \wedge K$
for the right tensor of $K$ and $X$.
We refer to cotensors of $K \ot * $ and $X$ by left cotensors and write ${^K X_*}$
for the left cotensor of $K$ and $X$.
We refer to cotensors of $* \ot K$ and $X$ by right cotensors and write ${X^K_*}$ for the right cotensor of $K$ and $X$.




\end{itemize}
	
\end{notation}

An initial object, final object, zero object of a (weakly reduced) oriented, antioriented, bioriented category is an initial object, final object, zero object in the sense of \cref{initi}.

\begin{definition}
An oriented, antioriented, bioriented category is reduced if it admits a zero object.
An oriented, antioriented, bioriented functor is reduced if it preserves the zero object.
\end{definition}

\vspace{1mm}

The next is \cite[Proposition 3.8.]{gepner2025oriented}:

\begin{proposition}\label{reduu}
The forgetful functors $${\wedge\Cat} \to \boxtimes\Cat, \ {\Cat\wedge} \to \Cat\boxtimes, \ {\wedge\Cat \wedge} \to {\boxtimes\Cat \boxtimes}$$
restrict to equivalences between the full subcategories of weakly reduced oriented, antioriented, bioriented categories 
that admit an initial or final object and the subcategories of reduced oriented, antioriented, bioriented categories and reduced oriented, antioriented, bioriented functors, respectively.

\end{proposition}

In view of \cref{reduu} we identify reduced oriented, antioriented, bioriented categories with weakly reduced 
oriented, antioriented, bioriented categories that admit an initial or final object. 
Similarly, we identify reduced oriented, antioriented, bioriented functors between reduced oriented, antioriented, bioriented categories with weakly reduced oriented, antioriented, bioriented functors.

\begin{example}\label{Grayspaces} By \cref{grayspace} there is a monoidal localization
$\tau_0: \infty\Cat \rightleftarrows \infty\Grp$ that by \cref{adj} induces a bioriented localization
$$  \tau_0: \infty\Cat \rightleftarrows \tau_0^*(\infty\Grp).$$

\end{example}

\begin{definition}\emph{}
A weakly (reduced) oriented, antioriented, bioriented category is presentable if it is presentable in the sense of \cref{present}.

\end{definition}

\begin{notation}\label{presol}
Let $${\wedge\Pr^L}, \ {\wedge\Pr^R} \subset \wedge\widehat{\Cat}, \ {\Pr^L\wedge}, \ {\Pr^R\wedge} \subset \widehat{\Cat}\wedge, \ {\wedge\Pr^L\wedge}, \ {\wedge\Pr^R\wedge} \subset \wedge\widehat{\Cat}\wedge$$ be the respective subcategories of presentable reduced antioriented, oriented, bioriented categories and left (right) adjoint reduced antioriented, oriented, bioriented functors.

\end{notation}

\cref{adjunc} gives the following:

\begin{proposition}

There are canonical equivalences sending left to right adjoints: $$	(\wedge\Pr^L)^\op \simeq \wedge\Pr^R, $$$$ (\Pr^L\wedge)^\op \simeq \Pr^R\wedge, $$$$ (\wedge\Pr^L\wedge)^\op \simeq {\wedge\Pr^R\wedge}.$$
\end{proposition}

We often use (weakly) (reduced) oriented, antioriented and bioriented slice categories, which we define as enriched slice categories in the sense of \cref{slices}.

	

 






\subsubsection{Opposite and conjugate oriented categories}

\begin{notation}
The equivalences of \cref{dua} give rise to the following equivalences
$$(-)^\co:= (-)^\op_!: {\boxtimes\Cat} \simeq {\Cat\boxtimes}, $$$$ (-)^\co:= (-)^\op_!: {\Cat\boxtimes} \simeq {\boxtimes\Cat}, $$
$$ (-)^\co:= ((-)^\op, (-)^\op)_! : {\boxtimes\Cat\boxtimes} \simeq {\boxtimes\Cat\boxtimes},$$
where the first two equivalences are inverse to each other and the third equivalence is an involution.
\end{notation}

\begin{notation}
Let $$(-)^\circ: {\Cat\boxtimes} \simeq {\boxtimes\Cat}, $$$$ (-)^\circ: {\boxtimes\Cat} \simeq {\Cat\boxtimes} $$$$ (-)^\circ:  {\boxtimes\Cat\boxtimes} \simeq {\boxtimes\Cat\boxtimes}$$ be the opposite enriched category involutions.	

\end{notation}

\begin{notation}The equivalences of \cref{dua} give rise to the following equivalences

$$ (-)^\op:= (-)^\circ\circ (-)^\co_!: {\boxtimes\Cat} \simeq {\boxtimes\Cat}, $$$$ (-)^\op:= (-)^\circ\circ (-)^\co_!: {\Cat\boxtimes} \simeq {\Cat\boxtimes},$$
$$  (-)^\op :=(-)^\circ \circ ((-)^\co, (-)^\co)_!: {\boxtimes\Cat\boxtimes} \simeq {\boxtimes\Cat\boxtimes}, $$
where the first two equivalences are inverse to each other and the third equivalence is an involution.
\end{notation}

\begin{notation}

Moreover we set
$$ {(-)^{\co\op}}:=  {(-)^{\co}} \circ {(-)^{\op}} \simeq {(-)^{\op}} \circ {(-)^{\co}}:{\boxtimes\Cat} \simeq {\Cat\boxtimes}, {\Cat\boxtimes} \simeq {\boxtimes\Cat}, {\boxtimes\Cat\boxtimes} \simeq {\boxtimes\Cat\boxtimes},$$
$$ {(-)^{\cop}}:=  {(-)^{\co\op}} \circ {(-)^{\circ}} \simeq {(-)^{\circ}} \circ {(-)^{\co\op}} \simeq (-)^{\co\op}_!: {\boxtimes\Cat} \simeq {\boxtimes\Cat}, {\Cat\boxtimes} \simeq {\Cat\boxtimes},$$
$${^{\cop}(-)}:= ((-)^{\co\op}, \id)_!, \ (-)^\cop:= (\id, (-)^{\co\op})_!: {\boxtimes\Cat\boxtimes} \simeq {\boxtimes\Cat\boxtimes},$$
$${^{\cop}(-)}^\cop:=(-)^\cop \circ  {^{\cop}(-)} \simeq $$$$ {^{\cop}(-)}\circ (-)^\cop \simeq ((-)^{\co\op}, (-)^{\co\op})_! \simeq$$$$ {(-)^{\co\op}} \circ {(-)^{\circ}} \simeq {(-)^{\circ}} \circ {(-)^{\co\op}}: {\boxtimes\Cat\boxtimes} \simeq {\boxtimes\Cat\boxtimes}.$$

\end{notation}

The equivalences of \cref{dua} gives rise to the following equivalences of bioriented categories:
\begin{corollary}\label{duae}There are canonical equivalences of bioriented categories:
$$(-)^\op: \infty\Cat^\co \simeq \infty\Cat, $$$$ (-)^\co: \infty\Cat^\op \simeq \infty\Cat^\circ, $$$$ (-)^{\co\op} : {^\cop\infty\Cat^{\cop}} \simeq \infty\Cat. $$	

\end{corollary}

Next we define oriented pushouts and oriented pullbacks following \cite[4.8.]{gepner2025oriented}.

\begin{definition}
Let $\mC$ be an oriented category.
The oriented pullback of a diagram $X \to Z\leftarrow Y$ in $\mC$ is a diagram
\[
\xymatrix{& W\ar[rd]\ar[ld] &\\
X\ar[rd] & \Longrightarrow & Y\ar[ld]\\
& Z &}
\]
in $\mC$ such that for all objects $T$ of $\mC$ the induced functor
\begin{equation}\label{indmapl}
\R\Mor_{\mC}(T,W)\to \R\Mor_{\mC}(T,X) \underset{\R\Mor_{\mC}(T,Z)}{\times}
\Fun^\oplax(\bD^1,\R\Mor_{\mC}(T,Z)) \underset{\R\Mor_{\mC}(T,Z)}{\times} \R\Mor_{\mC}(T,Y)
\end{equation}
is an equivalence in $\infty\Cat$.
We write $X\overset{\to}{\times}_Z Y$ or $ Y\overset{\leftarrow}{\times}_Z X$ for $W$ if it exists.

\end{definition}

\begin{definition}Let $\mC$ be an oriented category.
The oriented pushout of a diagram $X \leftarrow Z\to Y$ in $\mC$
is the oriented pullback in the oriented category $\mC^\op$ of the corresponding diagram.
We write $ X\overset{\to}{+}_Z Y $ or $Y\overset{\leftarrow}{+}_Z X$ for the oriented pushout.

\end{definition}

\begin{definition}Let $\mC$ be an antioriented category. 
\begin{enumerate}
\item The antioriented pullback of a diagram $X \to Z \leftarrow Y $ in $\mC$ denoted by $  X \overset{\bar{\to}}{\times}_Z Y $ or
$ Y \overset{\bar{\leftarrow}}{\times}_Z X $ is the oriented pullback of the corresponding diagram in the oriented category $\mC^{\co}$.	

\item The antioriented pushout of a diagram $X \leftarrow Z \to Y $ in $\mC$
denoted by $  X \overset{\bar{\to}}{+}_Z Y $ or $ Y \overset{\bar{\leftarrow}}{+}_Z X $ is the oriented pushout of the corresponding diagram in the oriented category $\mC^\co$.

\end{enumerate}	

\end{definition}

\begin{definition}
The (anti)oriented pushout and pullback in any bioriented category is the (anti)oriented pushout and pullback in the underlying (anti)oriented category.

\end{definition}

The next is \cite[Lemma 4.4.6.]{gepner2025oriented}:

\begin{lemma}\label{0desc}\label{adesc}
Let $\mC$ be an oriented category and $Z \to X, Z \to Y, X \to Z, Y \to Z$ morphisms in $\mC$.

\begin{enumerate}

\item If $\mC$ admits pushouts and right tensors with $\bD^1$, 
there is a canonical equivalence $$ X \overset{\to}{+}_Z Y \simeq X +_{\{0\}\otimes Z} (Z \ot \bD^1) +_{\{1\}\otimes Z} Y.$$	

\item If $\mC$ admits pullbacks and right cotensors with $\bD^1$, 
there is a canonical equivalence $$ X \overset{\to}{\times}_Z Y \simeq X \times_{Z^{\{0\}}} {Z^{\bD^1}} \times_{Z^{\{1\}}} Y.$$	
\end{enumerate}

\end{lemma}









\begin{definition}Let $\mC$ be a reduced oriented category.

\begin{enumerate}
\item Let $\phi: A \to B$ be a morphism in $\mC.$ The oriented left cofiber of $\phi$ is $ 0 \overset{\to}{+}_A B $.

\item Let $\phi: A \to B$ be a morphism in $\mC.$ The oriented right cofiber of $\phi$ is $ B \overset{\to}{+}_A 0 $.

\item Let $\phi: B \to A$ be a morphism in $\mC.$ 
The oriented left fiber of $\phi$ is $ 0 \overset{\to}{\times}_A B  $.

\item Let $\phi: B \to A$ be a morphism in $\mC.$ 
The oriented right fiber of $\phi$ is $ B \overset{\to}{\times}_A 0 $.

\end{enumerate} 
\end{definition}

\begin{definition}Let $\mC$ be a reduced antioriented category.

\begin{enumerate}
\item Let $\phi: A \to B$ be a morphism in $\mC.$ The antioriented left cofiber of $\phi$ is $ 0 \overset{\bar{\to}}{+}_A B $.

\item Let $\phi: A \to B$ be a morphism in $\mC.$ The antioriented right cofiber of $\phi$ is $B \overset{\bar{\to}}{+}_A 0$.

\item Let $\phi: B \to A$ be a morphism in $\mC.$ 
The antioriented left fiber of $\phi$ is $ 0 \overset{\bar{\to}}{\times}_A B $.

\item Let $\phi: B \to A$ be a morphism in $\mC.$ 
The antioriented right fiber of $\phi$ is $ B \overset{\bar{\to}}{\times}_A 0 $.

\end{enumerate} 
\end{definition}

\begin{definition}

A reduced (anti)oriented category admits (anti)oriented (co)fibers if it admits the (anti)oriented left and right (co)fiber of any morphism.

\end{definition}

The next is \cite[Corollary 4.3.12.]{gepner2025oriented}:

\begin{corollary}Let $\mC$ be a reduced oriented category and $X \in \mC$.
The oriented pushout $ {\overset{\overset{\rightarrow}{}}{0 +_C} 0}$ and the right tensor $X \wedge S^1$
both satisfy the same universal property.

\end{corollary}

The next is \cite[Corollary 4.3.13.]{gepner2025oriented}:

\begin{corollary}Let $\mC$ be a reduced antioriented category and $X \in \mC$.
The antioriented pushout $  {\overset{\overline{\rightarrow}}{0 +_C} 0}$ and the left tensor $S^1 \wedge X$ both satisfy the same universal property.
\end{corollary}






Next we refine (reduced) categorical suspension and (reduced) categorical antisuspension to a bioriented functor (\cref{reduc}) and prove that this refinement is functorial (\cref{impol}).
This is crucial to construct a bioriented category of categorical spectra in the next section.

The next is \cite[Theorem 4.4.4.]{heine2026stable}:

\begin{theorem}\label{reduc} Let $\mC$ be a reduced bioriented category.

\begin{enumerate}\item If $\mC$ admits suspensions, the antioriented functor $\Sigma: \mC \to \mC$ refines to a bioriented functor $$\mC^\cop \to \mC.$$

\item If $\mC$ admits endomorphisms, the antioriented functor $\Omega: \mC \to \mC$ refines to a bioriented functor $$\mC \to \mC^\cop.$$

\item If $\mC$ admits suspensions and endomorphisms, there is a bioriented adjunction
$$\Sigma: \mC^\cop \rightleftarrows \mC: \Omega.$$ 




\item If $\mC$ admits small colimits and left and right tensors, then $\Sigma: \mC \to \mC$ factors as
$$\mC \ot_{\infty\Cat_*} \Sigma: \mC \simeq \mC {\ot_{\infty\Cat_*} \infty\Cat_*}  \to \mC {\ot_{\infty\Cat_*} \infty\Cat_*} \simeq \mC.$$ 



\item There is an equivalence of reduced bioriented functors $$\Sigma \circ (-)^{\co\op}\simeq (-)^{\co\op} \circ \Sigma: {^\cop\infty\Cat_*} \to \infty\Cat_*.$$

\end{enumerate}	
\end{theorem}

The next is \cite[Proposition 4.4.5.]{heine2026stable}:

\begin{proposition}\label{impol} Let $\mC, \mD$ be reduced bioriented categories.

\begin{enumerate}
\item If $\mC,\mD$ admit endomorphisms, there is a transformation
$$\F \mapsto \F \circ \Omega_\mC \to \Omega_\mD \circ \F$$
of endofunctors of the category of reduced bioriented functors $\mC \to \mD.$



\item If $\mC,\mD$ admit suspensions, there is a transformation 
$$\F \mapsto \Sigma_\mD \circ \F \to \F \circ \Sigma_\mC $$
of endofunctors of the category of reduced bioriented functors $\mC \to \mD.$


\item If $\mC$ admits supensions and $\mD$ admits endomorphisms,
there is a transformation 
$$\F \mapsto \F \to\Omega_\mD \circ \F \circ \Sigma_\mC $$
of endofunctors of the category of reduced bioriented functors $\mC \to \mD.$


\end{enumerate}

\end{proposition}








\newpage

\section{A categorical Dold Kan correspondence}

In this section we prove a categorical analogue of the Dold–Kan correspondence, which provides a combinatorial presentation of categorical homology in which the Street nerve plays the role of the singular complex.




\subsection{Categorical stability}

In the following we introduce stable oriented categories and their variants, which are higher-categorical analogues of classical stable categories. We follow \cite{heine2026stable}.

\begin{definition}
An oriented category $\mC$ is stable if the following conditions are satisfied:
	
\begin{enumerate}
\item $\mC$ admits a zero object.
		
\item $\mC$ admits oriented fibers and oriented cofibers.
 
\item The endomorphisms functor $\Omega: \mC \to \mC $ is an equivalence. 

\end{enumerate}
	

\end{definition}

	
	
	
	
	













\begin{definition}
A bioriented category is stable if it is reduced and its underlying oriented category is stable.	
	
\end{definition}

\begin{example}

Every stable category viewed as a bioriented category is stable.

\end{example}


	








We prove next that stability implies preadditivity.

\begin{definition}
An oriented, antioriented, bioriented category is preadditive if it is reduced, admits finite coproducts and finite products and for every $A,B \in \mC$ the canonical morphism $A \coprod B \to A \times B$ induced by the identity of $A,B$ and the zero morphisms, is an equivalence.	
	
\end{definition}

\begin{remark}

An oriented, antioriented, bioriented category is preadditive if and only if it is reduced, admits finite coproducts and finite products and the underlying category is preadditive.
    
\end{remark}


The next is \cite[Proposition 5.1.11.]{heine2026stable}:

\begin{proposition}\label{preadd}
Every stable oriented category is preadditive.	
\end{proposition}

\begin{notation}
Let $\Pr^L \subset \widehat{\Cat}$ be the subcategory of presentable categories and left adjoint functors.	

\end{notation}
\begin{notation}
Let $\Pr^L_{\mathrm{preadd}} \subset \Pr^L$ be the full subcategory of preadditive presentable categories.

\end{notation}

By \cite[Theorem 4.6.]{gepner2016universality} there is a symmetric monoidal localization \begin{equation}\label{loo}\Mon_{\bE_\infty}:\Pr^L \rightleftarrows \Pr^L_{\mathrm{preadd}},\end{equation} where $\Mon_{\bE_\infty}$ assigns to any presentable category $\mD$ the category of commutative monoid objects in $\mD$ and the unit is the free functor.
The localization (\ref{loo}) gives rise to a localization on associative algebras whose  local objects are precisely the preadditive presentably monoidal categories:  \begin{equation}\label{adjass}
\Mon_{\bE_\infty}:\Alg(\Pr^L) \rightleftarrows \Alg(\Pr^L_{\mathrm{preadd}}).\end{equation}

Moreover it gives rise to localizations, where the subscript preadd refers to the full subcategories of preadditive presentable reduced oriented, bioriented categories:
$$ \Mon_{\bE_\infty}: \wedge \Pr^L\rightleftarrows \wedge \Pr^L_{\mathrm{preadd}}, \  \Mon_{\bE_\infty}: \wedge \Pr^L\wedge \rightleftarrows \wedge {\Pr^L_{\mathrm{preadd}}\wedge} .$$


The next is \cite[Corollary 5.1.15.]{heine2026stable}:

\begin{corollary}\label{hhoo}\emph{ }
\begin{enumerate}

\item Let $\mC$ be a presentable stable oriented category.
The right adjoint oriented forgetful functor $ \Mon_{\bE_\infty}(\mC) \to \mC $ is an equivalence.
\vspace{1mm}


\vspace{1mm}
\item Let $\mC$ be a presentable stable bioriented category.
The right adjoint bioriented forgetful functor $ \Mon_{\bE_\infty}(\mC) \to \mC$ is an equivalence.
	
\end{enumerate}
\end{corollary}




	



\subsection{Categorical spectra}

Next we define spectrum objects in bioriented categories. The prime example are spectrum objects in the bioriented category $\infty\Cat$, which are known as categorical spectra. 


\begin{definition}Let $\mC$ be a reduced bioriented category that admits endomorphisms.
The bioriented category $\Sp(\mC)$ of spectrum objects in $\mC$ is the limit of the following  diagram of reduced bioriented categories: 
$$...\xrightarrow{\Omega} \mC^\cop \xrightarrow{\Omega} \mC  \xrightarrow{\Omega} \mC^\cop \xrightarrow{\Omega} \mC.$$

\end{definition}

	


\begin{example}
	
The bioriented category $\Sp(\infty\Grp)$ is the usual category of spectra viewed as a bioriented category.	

\end{example}

\begin{definition}\label{catspe}
The bioriented category of categorical spectra is $$\Cat\Sp := \Sp(\infty\Cat_*).$$

\end{definition}

\begin{remark}\label{nnm}
Let $\mC$ be a reduced presentable bioriented category. Then the bioriented functor $\Omega$ admits a left adjoint.
This implies that the bioriented category $\Sp(\mC)$ is presentable.
This follows from the fact that the subcategory $\wedge\Pr^R\wedge $ of presentable bioriented categories and right adjoint bioriented functors admits small limits preserved by the inclusion $\wedge\Pr^R\wedge \subset {\wedge\widehat{\Cat}\wedge}.$
\end{remark}

\begin{remark}\label{nnmt}
Via the equivalence $\wedge\Pr^L\wedge \simeq (\wedge\Pr^R\wedge)^\op$ and \cref{nnm} the bioriented category $\Sp(\mC)$ is also the colimit in $\wedge\Pr^L\wedge$ of the sequential diagram $$\mC \xrightarrow{\Sigma} \mC^\cop \xrightarrow{\Sigma} \mC \xrightarrow{\Sigma}....$$
\end{remark}

\begin{remark}\label{sptens}
Let $\mC$ be a presentable reduced bioriented category.
By \cref{reduc} the reduced bioriented functor $\Sigma: \mC \to \mC$ factors as
$$\mC \ot_{\infty\Cat_*} \Sigma: \mC \simeq \mC \ot_{\infty\Cat_*} \infty\Cat_* \to \mC \ot_{\infty\Cat_*} \infty\Cat_* \simeq \mC.$$ 
Hence there is a canonical equivalence of reduced bioriented categories $\Sp(\mC) \simeq \mC \ot_{\infty\Cat_*} \Cat\Sp.$ 
	
\end{remark}
	

\begin{notation}Let $\mC$ be a reduced bioriented category that admits endomorphisms.
The reduced bioriented functor of infinite endomorphisms $$\Omega^\infty: \Sp(\mC) \to \mC$$ is the canonical bioriented functor projecting to the value of the final object of the diagram.

\end{notation}

\begin{notation}\label{infsuspi}
Let $\mC$ be a reduced presentable bioriented category. The bioriented category $\Sp(\mC)$ is presentable by \cref{nnm} and the bioriented functor $\Omega^\infty: \Sp(\mC) \to \mC$ is accessible, preserves small limits, and left and right cotensors. Thus $\Omega^\infty: \Sp(\mC) \to \mC$ admits a bioriented left adjoint $\Sigma^\infty: \mC \to \Sp(\mC)$
by \cref{adj}, which we call infinite suspension.

\end{notation}

\begin{remark}Let $\mC$ be a reduced presentable bioriented category such that endomorphisms preserve small filtered colimits.
The bioriented functor $\Omega^\infty: \Sp(\mC) \to \mC $ preserves small filtered colimits so that the bioriented left adjoint $\Sigma^\infty: \mC \to  \Sp(\mC) $ preserves compact objects. 	
	
\end{remark}

The next is \cite[Lemma 5.2.14. ]{heine2026stable}:

\begin{lemma}Let $\mC$ be a bioriented category that admits a final object and the coproduct of any object with the final object.
The bioriented forgetful functor $\mC_* \to \mC$ admits a bioriented left adjojnt $(-)_+$ that sends $X$ to $ X \coprod *.$ 
	
\end{lemma}

\begin{notation}Let $\mC$ be a presentable bioriented category. Then by \cref{infsuspi} the forgetful bioriented functor $\Sp(\mC) \to \mC_*$ admits a bioriented left adjoint $\Sigma^\infty.$
We define the bioriented adjunction $$\Sigma^\infty_+:= \Sigma^\infty \circ (-)_+: \mC \rightleftarrows  \mC_* \rightleftarrows \Sp(\mC):\Omega^\infty.$$
	
\end{notation}

\begin{remark}\label{comprestri}
Let $\mC$ be a reduced compactly generated bioriented category.
Since the bioriented functor $\Omega: \infty\Cat^\cop \to \infty\Cat $ preserves small filtered colimits by \cref{endfil}, the bioriented functor $\Omega: \mC^\cop \to \mC $ preserves small filtered colimits. Thus the bioriented left adjoint $\Sigma: \mC \to \mC^\cop $ preserves compact objects and so restricts to a functor $\Sigma: \mC^\omega \to (\mC^\omega)^\cop $ on compact objects. 

\end{remark}




	



The next is \cite[Proposition 5.2.19.]{heine2026stable}:

\begin{proposition}\label{rstab}
Let $\mC$ be a reduced bioriented category that admits endomorphisms.
The functor $\Omega: \Sp(\mC) \to \Sp(\mC)$ is equivalent to the evident induced functor on limits $$ \lim(... \xrightarrow{\Omega}\mC \xrightarrow{\Omega} \mC) \to \lim(... \xrightarrow{\Omega}\mC \xrightarrow{\Omega} \mC), \ (X_0, X_1,X_2, ...) \mapsto (X_{-1}, X_0,X_1, ...),$$
which is inverse to the shift functor $$ \lim(... \xrightarrow{\Omega}\mC \xrightarrow{\Omega} \mC) \to \lim(... \xrightarrow{\Omega}\mC \xrightarrow{\Omega} \mC), \ (X_0, X_1,X_2, ...) \mapsto (X_1, X_2,X_3, ...).$$

	
\end{proposition}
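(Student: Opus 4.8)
The plan is to show that the intrinsic endomorphism functor of the limit Gray-category $\Sp(\mC)$ is computed levelwise, and that the $\cop$-alternation in the defining tower turns this levelwise $\Omega$ into a shift, which is manifestly invertible. Write $\mC_n$ for the $n$-th term of the tower (so $\mC_0 = \mC$, $\mC_1 = \mC^\cop$, and in general $\mC_n^\cop = \mC_{n+1}$), with all transition functors equal to $\Omega$. By \cref{reduc}(2) each such $\Omega$ is a reduced Gray-functor, so the zero object of $\mC$ lifts to a levelwise zero object of $\Sp(\mC)$, which is therefore reduced. Moreover, morphism objects in a sequential limit of Gray-categories are the limits in $\infty\Cat$ of the morphism objects along the tower, with transition maps induced by $\Omega$ on morphism objects; hence the projections $\Omega^\infty_n \colon \Sp(\mC) \to \mC_n$ are Gray-functors in the limit cone, so they preserve morphism objects and in particular endomorphisms, and they jointly detect equivalences. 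It follows that $\Sp(\mC)$ admits endomorphisms, with $\Omega_{\Sp(\mC)}$ characterised by $\Omega^\infty_n \circ \Omega_{\Sp(\mC)} \simeq \Omega_{\mC_n} \circ \Omega^\infty_n$ for every $n$.

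Next I would identify $\Omega_{\Sp(\mC)}$ with the down-shift. Applying $\Omega$ in each coordinate sends a spectrum object $(X_0, X_1, X_2, \dots)$, subject to the defining equivalences $X_n \simeq \Omega(X_{n+1})$, to $(\Omega X_0, \Omega X_1, \Omega X_2, \dots)$; this lies again in $\Sp(\mC)$ because applying $\Omega$ to the structure equivalences supplies the required compatibilities, and because $\Omega_{\mC_n}$ lands in $\mC_n^\cop = \mC_{n+1}$, so the $\cop$-alternation makes the levelwise construction an endofunctor of $\Sp(\mC)$ rather than of $\Sp(\mC)^\cop$. By the characterisation above this levelwise functor is $\Omega_{\Sp(\mC)}$. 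Using $\Omega X_n \simeq X_{n-1}$ one reads off that its $n$-th coordinate is $X_{n-1}$ for $n \geq 1$ and $\Omega X_0 =: X_{-1}$ for $n = 0$, which is exactly the down-shift $(X_0, X_1, \dots) \mapsto (X_{-1}, X_0, X_1, \dots)$ of the statement.

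Finally I would verify invertibility and conclude. The candidate inverse is the up-shift $(X_0, X_1, \dots) \mapsto (X_1, X_2, \dots)$, which lands in $\Sp(\mC)$ since the structure equivalences $X_{n+1} \simeq \Omega(X_{n+2})$ provide the new ones. Composing, the up-shift after the down-shift returns $(X_0, X_1, \dots)$ on the nose, while the down-shift after the up-shift returns $(\Omega X_1, X_1, X_2, \dots) \simeq (X_0, X_1, X_2, \dots)$ using $\Omega X_1 \simeq X_0$; hence $\Omega_{\Sp(\mC)}$ is an equivalence with the up-shift as inverse. Since $\Sp(\mC)$ is reduced, admits endomorphisms, and has $\Omega$ an equivalence, it is oplax quasi-stable by definition.

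The main obstacle I expect is the first paragraph: rigorously matching the intrinsic endomorphism functor $\Omega_{\Sp(\mC)}$ with the levelwise application of $\Omega$, while tracking the $\cop$-twists so that the output is an endofunctor of $\Sp(\mC)$. Once ``endomorphisms are computed levelwise'' is secured — via morphism objects in the limit being limits of morphism objects and the projections forming a jointly conservative limit cone of Gray-functors — the identification with the shift and its invertibility are purely formal. As an alternative one can dualise through \cref{nnmt}, where $\Sp(\mC)$ is the colimit in $\wedge\Pr^L\wedge$ of $\mC \xrightarrow{\Sigma} \mC^\cop \xrightarrow{\Sigma} \cdots$, exhibit $\Sigma$ as the up-shift directly, and obtain $\Omega$ as its inverse by passing to right adjoints.
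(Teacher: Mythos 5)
Your overall strategy is the same as the paper's: realize $\Omega_{\Sp(\mC)}$ as the functor on limits induced by applying $\Omega$ levelwise to the defining tower, identify that with the down-shift via the structure equivalences $X_n \simeq \Omega(X_{n+1})$, and observe that the shift is invertible. But there is a genuine gap at precisely the point you flag as ``the main obstacle'' and then pass over. To get a map of towers at all, each square must be made to commute by supplying an interchange equivalence $\Omega\circ\Omega\simeq\Omega\circ\Omega$; and to identify the induced functor on limits with the \emph{evident} down-shift, that interchange must be the identity rather than some nontrivial self-equivalence. Your characterisation of $\Omega_{\Sp(\mC)}$ by $\Omega^\infty_n\circ\Omega_{\Sp(\mC)}\simeq\Omega_{\mC_n}\circ\Omega^\infty_n$ does not suffice: a functor into $\lim_n\mC_n$ is not determined by its compositions with the projections alone, so ``reading off that the $n$-th coordinate is $X_{n-1}$'' leaves open whether the levelwise identifications $\Omega X_n\simeq X_{n-1}$ are compatible with the transition maps --- and that compatibility is exactly what the interchange datum governs.

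This coherence check is the entire content of the paper's proof: the interchange $\Omega\circ\Omega\simeq\Omega\circ\Omega$ is induced by the flip $\tau\colon S^1\wedge S^1\simeq\Sigma(S^1)\simeq\Sigma'((S^1)^{\co\op})\simeq\Sigma'(S^1)\simeq S^1\wedge S^1$ of \cref{ahoit}, and one must then invoke \cref{autoeq} (every auto-equivalence of $S^1\wedge S^1$ is the identity, since $\pi_0(\infty\Cat_*(S^2,S^2))\cong\bN$ has trivial units) to conclude that $\tau$ is the identity. This is a non-formal input resting on \cref{freesp}; without it your argument only shows that $\Omega_{\Sp(\mC)}$ is a possibly twisted shift, which is not what the proposition asserts. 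Your proposed alternative via \cref{nnmt} runs into the same interchange issue for $\Sigma$ on the colimit, and in addition requires presentability of $\mC$, whereas the proposition is stated for an arbitrary reduced Gray-category with endomorphisms.
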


	

\begin{corollary}\label{stab}
The presentable bioriented category $\Cat\Sp$ is stable.

\end{corollary}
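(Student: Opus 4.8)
The plan is to reduce stability of $\Sp$ to the combination of quasi-stability, which has essentially already been established, and presentability, which supplies the missing (co)fibers. First I would record that $\Sp = \Sp(\infty\Cat_*)$ is oplax quasi-stable: this is a direct instance of \cref{rstab}, applied to the reduced Gray-category $\infty\Cat_*$, which admits endomorphisms. Dually, using the equivalence $\Sp \simeq \Sp^\co(\infty\Cat_*)$ established above together with \cref{crstab}, the Gray-category $\Sp$ is lax quasi-stable. Hence $\Sp$ is quasi-stable, and by \cref{agos} so is its opposite $\Sp^\co$; in particular both $\Sp$ and $\Sp^\co$ are lax quasi-stable.

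Next I would invoke presentability to produce all the fibers and cofibers demanded by the definition of lax stability. By \cref{nnm} the Gray-category $\Sp = \Sp(\infty\Cat_*)$ is presentable, and since $(-)^\co$ is an equivalence $\wedge\Pr^L\wedge \simeq \wedge\Pr^L\wedge$, the opposite $\Sp^\co$ is presentable as well. A presentable reduced Gray-category admits all small limits and colimits together with reduced left and right tensors and cotensors with $\bD^1$; plugging these into the explicit descriptions of \cref{redfib} shows that such a category admits every flavour of lax and oplax left and right fiber and cofiber.

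Assembling the two inputs gives the result. The underlying left Gray-category of $\Sp$ is lax quasi-stable and admits lax fibers and cofibers, hence is lax stable, so $\Sp$ is lax stable. The identical argument applied to $\Sp^\co$ shows that $\Sp^\co$ is lax stable, which is exactly the statement that $\Sp$ is oplax stable. Being both lax and oplax stable, $\Sp$ is stable.

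I expect no serious obstacle here: the genuine work lives in \cref{rstab} and \cref{crstab}, where the shift functor is shown to invert $\Omega$. The only point requiring care is the bookkeeping of the duality operations, namely checking that lax quasi-stability of $\Sp^\co$ really does follow from quasi-stability of $\Sp$ via \cref{agos}, and that the four types of (co)fibers are interchanged correctly under $(-)^\co$, so that presentability of both $\Sp$ and $\Sp^\co$ is precisely what is needed.
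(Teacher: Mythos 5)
Your proposal is correct and matches the paper's (implicit) argument: the paper states this as a direct corollary of \cref{rstab} and \cref{crstab} applied to $\Sp \simeq \Sp(\infty\Cat_*) \simeq \Sp^\co(\infty\Cat_*)$, with presentability (\cref{nnm}) supplying the lax and oplax (co)fibers via the formulas of \cref{redfib}. The only cosmetic difference is that you pass through presentability of $\Sp^\co$, whereas one can equally note that the required oplax (co)fibers already exist in $\Sp$ itself.
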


Next we consider an example of categorical spectrum.
The next example is \cite[Example 13.3.12.]{HigherQuasi}:

\begin{example}

Let $\mC$ be a symmetric monoidal category compatible with
geometric realizations.

Haugseng \cite{haugseng2017} constructs for every $\n \geq 0$ an $(\n+ 2)$-category $\mathrm{Morita}^{n}(\mC)$ of $\bE_{\n+1}$-algebras in $\mC$ whose objects are $\bE_{\n+1}$-algebras in $\mC$ and whose morphisms between
$\bE_{\n+1}$-algebras $A,B$ in $\mC$ are $\bE_{\n}$-algebras in $(A,B)$-bimodules in $\mC$, and so on.
The latter are non-unital after taking $\n+1$-fold morphism objects.

So
$\mathrm{Morita}^0(\mC)$
is the usual Morita 2-category of $\mC$.
We view $\mathrm{Morita}^\n(\mC)$ as $\infty$-category with distinguished object, where the distinguished object is the tensor unit of $\mC.$

By \cite[Corollary 5.51]{haugseng2017} 
for every $\n \geq 0$ there is a canonical equivalence $$ \Omega(\mathrm{Morita}^\n(\mC)) \simeq \mathrm{Morita}^{\n-1}(\mC)$$
of $\infty$-categories with distinguished object.
Hence there is a categorical spectrum
$$ \mathrm{morita}(\mC) := \{\mathrm{Morita}^\n(\mC)\}, $$
which we call the Morita spectrum of $\mC$. 

\end{example}

	

\begin{definition}\label{Brau} Let $R$ be an $\bE_\infty$-ring spectrum.
The Morita spectrum $\mathrm{morita}(R) $ of $R$ is the Morita spectrum of $R\mathrm{-}\Mod(\Sp(\infty\Grp)).$	
	
\end{definition}

\begin{definition}
	
A presentably monoidal reduced bioriented category is stable if its underlying presentable reduced bioriented category is
stable.
\end{definition}




The next is \cite[Corollary 5.2.36.]{heine2026stable}:

\begin{corollary}\label{unicity}

The category $\Cat\Sp$ carries a unique presentably monoidal structure such that the functor $\Sigma^\infty: \infty\Cat \to \Cat\Sp$ refines to a monoidal functor.

\end{corollary}

Next we consider prespectrum objects in any reduced bioriented category. 

\begin{notation}

Let $\mC$ be a reduced bioriented category that admits endomorphisms.
Let $$\mC^\Omega:= \mC^{\bD^1}\times_{\mC^{\{1\}}}\mC^\cop$$ be the pullback in $\wedge\Cat\wedge$ of evaluation at the target along $\Omega: \mC^\cop \to \mC.$

\end{notation}

\begin{remark}

There are canonical bioriented functors $\mC^\Omega \to \mC^{\bD^1} \to \mC^{\{0\}}$ and $ \mC^\Omega \to \mC^\cop$.

\end{remark}

\begin{definition}
The bioriented category of prespectrum objects in $\mC$ is the following limit in $\wedge\Cat\wedge$:
$$\Pre\Sp(\mC):= ... \times_{\mC^\cop} \mC^\Omega \times_{\mC}(\mC^\cop)^\Omega \times_{\mC^\cop} \mC^\Omega. $$

\end{definition}

\begin{remark}
A prespectrum in $\mC$ consists of a sequence $(X_0,X_1,X_2,...)$ of objects in $\mC$ and bonding morphisms $X_\n \to \Omega(X_{\n+1})$ in $\mC$ for every $\n \geq 0$.

\end{remark}

\begin{example}
	
The bioriented category $\Pre\Sp(\infty\Grp)$ is the usual category of prespectra viewed as a bioriented category.	
	
\end{example}

\begin{remark}
	
Since $\mC$ admits a zero object and endomorphisms, also $\mC^\Omega$ and so also the limit $\Pre\Sp(\mC)$ admit a zero object and endomorphisms.	
	
\end{remark}

\begin{definition}
The bioriented category of categorical prespectra is $$\Cat\Pre\Sp:=\Pre\Sp(\infty\Cat_*).$$

\end{definition}






The next is \cite[Lemma 6.1.12.]{heine2026stable}:

\begin{lemma}\label{embes}
Let $\mC$ be a reduced bioriented category that admits endomorphisms.
There is an embedding of bioriented categories 
$$\Sp(\mC) \subset \Pre\Sp(\mC)$$
whose essential image precisely consists of the prespectra whose bonding morphisms are equivalences.

\end{lemma}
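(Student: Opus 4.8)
The plan is to realize $\Sp(\mC)$ as the full sub-Gray-category of $\Pre\Sp(\mC)$ spanned by the pre-spectra all of whose bonding morphisms are equivalences, and to exhibit the inclusion as a Gray-embedding. First I would construct the comparison Gray-functor $\Phi\colon \Sp(\mC) \to \Pre\Sp(\mC)$. The Gray-category $\Sp(\mC) = \lim(\cdots \xrightarrow{\Omega} \mC^\cop \xrightarrow{\Omega} \mC)$ carries projection Gray-functors $\pi_n$ onto the alternating copies of $\mC$ and $\mC^\cop$ together with coherent equivalences $\pi_n \simeq \Omega \circ \pi_{n+1}$. For each $n$ the data $(\pi_{n+1}\colon \Sp(\mC)\to\mC^\cop)$ and the structural equivalence $\pi_n \simeq \Omega(\pi_{n+1})$, viewed as an arrow $\Sp(\mC)\to\mC^{[1]}$ with target $\Omega(\pi_{n+1})$, determine a Gray-functor $\Sp(\mC) \to \mC^\Omega = \mC^{[1]} \times_{\mC^{\{1\}}} \mC^\cop$ (resp.\ into $(\mC^\cop)^\Omega$) whose source- and target-projections are $\pi_n$ and $\pi_{n+1}$. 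These are compatible with the gluing maps in the defining limit of $\Pre\Sp(\mC)$, so by the universal property of that limit they assemble into $\Phi$. By construction the bonding morphism of $\Phi(X)$ at level $n$ is the equivalence $\pi_n(X) \simeq \Omega(\pi_{n+1}(X))$, whence $\Phi$ lands in the full subcategory $\mF \subseteq \Pre\Sp(\mC)$ of pre-spectra with invertible bonding morphisms.

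For full faithfulness I would compare morphism $\infty$-categories. For $X,Y \in \Sp(\mC)$ the limit description gives $\Mor_{\Sp(\mC)}(X,Y) \simeq \lim(\cdots \to \Mor_\mC(X_1,Y_1) \to \Mor_\mC(X_0,Y_0))$, the transition maps being induced by $\Omega$ and the bonding equivalences of $X$ and $Y$. On the pre-spectrum side, \cref{comp} expresses $\Mor_{\Pre\Sp(\mC)}(\Phi X,\Phi Y)$ as the iterated fibre product $\Mor_\mC(X_0,Y_0) \times_{\Mor_\mC(X_0,\Omega Y_1)} \Mor_\mC(X_1,Y_1) \times_{\Mor_\mC(X_1,\Omega Y_2)} \cdots$; since the bonding morphisms of $\Phi Y$ are equivalences, each map $\Mor_\mC(X_n,Y_n) \to \Mor_\mC(X_n,\Omega Y_{n+1})$ is an equivalence, so the fibre products telescope to exactly the sequential limit above (this is precisely the content of \cref{compo}). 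Identifying the transition maps of the two towers shows that $\Phi$ induces an equivalence on all morphism $\infty$-categories, hence is a Gray-embedding.

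It remains to identify the essential image with $\mF$. Containment in $\mF$ holds by the first step. Conversely, a pre-spectrum $(X_n)$ with invertible bonding morphisms $X_n \xrightarrow{\sim} \Omega(X_{n+1})$ is exactly a cone over the diagram $\cdots \xrightarrow{\Omega} \mC^\cop \xrightarrow{\Omega} \mC$, hence defines an object of $\Sp(\mC)$ that $\Phi$ carries back to the given pre-spectrum; thus $\Phi$ is essentially surjective onto $\mF$. The conceptual engine behind this is the local statement that the full sub-Gray-category $\mE \subseteq \mC^\Omega$ of objects with invertible arrow is carried by the target-projection to an equivalence $\mE \simeq \mC^\cop$, with inverse the identity-arrow section and with the source-projection becoming $\Omega$ under this identification; the unit and counit are equivalences because an invertible arrow contracts onto the identity of its target.

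I expect the main obstacle to be the bookkeeping of the opposite/conjugate decorations on the alternating copies of $\mC$ and $\mC^\cop$: one must check that the transition maps of the tower produced by \cref{comp}/\cref{compo} genuinely coincide with those in the defining limit of $\Sp(\mC)$ (both arising from $\Omega$, but through different structural equivalences), and that the whole comparison is valid at the Gray-enriched level rather than merely on underlying objects. Equivalently, one must verify that forming the full subcategory on invertible-bonding objects commutes with the limit defining $\Pre\Sp(\mC)$ and matches the local equivalence $\mE \simeq \mC^\cop$ factorwise, which is the delicate enriched point of the argument.
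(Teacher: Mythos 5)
Your proof is correct and follows essentially the same route as the paper: the paper's argument is precisely your "conceptual engine," namely that the full subcategory $(\mC^\Omega)' \subset \mC^\Omega$ of objects with invertible bonding arrow is carried by the target-projection to an equivalence with $\mC^\cop$ over $\mC \times \mC^\cop$, so that the levelwise embeddings $(\mC^\Omega)' \subset \mC^\Omega$ induce the embedding $\Sp(\mC) \simeq \cdots \times_{\mC}\mC \times_{\mC^\cop}\mC^\cop \subset \Pre\Sp(\mC)$ on limits. Your extra verification of full faithfulness via \cref{comp} and \cref{compo} is a sound elaboration of a step the paper leaves implicit, not a different argument.
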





\begin{definition}\label{hiul}
Let $\mC$ be a reduced bioriented category that admits endomorphisms and sequential colimits and $X \in \Pre\Sp(\mC)$. The associated spectrum of $X$ is the following spectrum $X'$:
for every $\n \geq 0$ let $$X'_\n := \colim(X_\n \to \Omega(X_{\n+1}) \to \Omega^2(X_{\n+2}) \to ...).$$
The bonding morphism $ X'_\n \to \Omega(X'_{\n+1})$ is the canonical equivalence
$$  \colim(X_\n \to \Omega(X_{\n+1}) \to \Omega^2(X_{\n+2}) \to ...) \simeq \colim(\Omega(X_{\n+1}) \to \Omega^2(X_{\n+2}) \to \Omega^3(X_{\n+3}) \to ...) $$$$\simeq \Omega(\colim(X_{\n+1} \to \Omega(X_{\n+2}) \to \Omega^2(X_{\n+3}) \to ...)).$$

\end{definition}

\begin{remark}

There is a morphism $X \to X'$ in $\Pre\Sp(\mC)$
such that for any $\n \geq 0$ the morphism $X_\n \to X'_\n$ is the morphism to the colimit using the canonical factorization $$X_\n \to \Omega^2(X_{\n+2}) \to \Omega^2(X'_{\n+2}) \simeq X'_\n $$ of the morphism $X_\n \to X'_\n.$

\end{remark}

The next is \cite[Proposition 6.1.16.]{heine2026stable}:

\begin{proposition}\label{spectri}

Let $\mC$ be a reduced bioriented category that admits endomorphisms and sequential colimits and let $X \in \Pre\Sp(\mC)$ and $Y \in \Sp(\mC).$
The following induced functor is an equivalence: $$\Mor_{\Pre\Sp(\mC)}(X',Y) \to \Mor_{\Pre\Sp(\mC)}(X,Y).$$

The bioriented embedding $\Sp(\mC)\subset \Pre\Sp(\mC)$ admits a left adjoint sending $X \in \Pre\Sp(\mC)$ to $X'$.

\end{proposition}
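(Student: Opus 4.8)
The plan is to prove the two assertions in turn: first the $\Mor$‑equivalence, and then to deduce the Gray‑adjunction from it. Once we know that for every $X \in \Pre\Sp(\mC)$ and every $Y \in \Sp(\mC)$ the morphism $X \to X'$ of \cref{hiul} induces an equivalence $\Mor_{\Pre\Sp(\mC)}(X',Y) \xrightarrow{\sim} \Mor_{\Pre\Sp(\mC)}(X,Y)$, natural in both variables, we are essentially done. Indeed, the bonding morphisms of $X'$ are equivalences by construction, so $X' \in \Sp(\mC)$ by \cref{embes}, and the displayed natural equivalence exhibits $X \to X'$ as a reflection of $X$ into the full Gray‑subcategory $\Sp(\mC)$. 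A reflection existing at every object, compatibly with the enrichment, produces the left adjoint Gray‑functor via the enriched adjoint‑functor criterion (\cite[Remark 2.72.]{heine2024bienriched}, as applied in the proof of \cref{Morenrfun}), with unit $X \to X'$; this gives the final sentence.

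For the $\Mor$‑equivalence, I would first apply \cref{compo} to both sides, which is legitimate since $Y \in \Sp(\mC)$: this identifies $\Mor_{\Pre\Sp(\mC)}(X,Y) \simeq \lim_m \Mor_\mC(X_m,Y_m)$ and $\Mor_{\Pre\Sp(\mC)}(X',Y) \simeq \lim_m \Mor_\mC(X'_m,Y_m)$, with the comparison map induced levelwise by precomposition with the structure maps $X_m \to X'_m$. Next, expand $X'_m = \colim_k \Omega^k X_{m+k}$ from \cref{hiul}. Since the enriched morphism object $\Mor_\mC(-,Y_m)$ is a representable weighted‑limit functor and hence continuous in its first variable, it carries this defining sequential colimit to a sequential limit (by the weighted‑(co)limit formalism of \cite{heine2024higher}), giving $\Mor_\mC(X'_m,Y_m) \simeq \lim_k B_{m,k}$ with $B_{m,k} := \Mor_\mC(\Omega^k X_{m+k}, Y_m)$. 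The crucial bookkeeping step is to recognise how the $\lim_m$‑transition of the $X'$‑tower acts on the $B_{m,k}$: because $Y$ is a spectrum, $Y_{m-1} \simeq \Omega(Y_m)$, and because the bonding equivalence $X'_{m-1} \simeq \Omega(X'_m)$ of \cref{hiul} is realised by dropping the first term of the defining colimit and commuting $\Omega$ past it (this is exactly where one uses that $\Omega$ preserves the relevant sequential colimits, as in \cref{hiul} and \cref{endfil}), the transition is induced not by fixing $k$ but by the shift $B_{m,k} \to B_{m-1,k+1}$, obtained by applying $\Omega$ and using $Y_{m-1}\simeq \Omega(Y_m)$.

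Consequently the whole bi‑indexed system assembles into a single diagram on the region $R := \{(m,N) : 0 \le m \le N\}$, via $(m,N) \mapsto \Mor_\mC(\Omega^{N-m}X_N, Y_m)$, in which the generating transition maps decrease $m$ or decrease $N$ independently. Under this reindexing $\Mor_{\Pre\Sp(\mC)}(X',Y) \simeq \lim_R$, whereas the $X$‑side $\lim_m \Mor_\mC(X_m,Y_m)$ is the limit over the diagonal $D = \{(m,m)\}$ (the locus $k=0$), and the comparison map is precisely the restriction $\lim_R \to \lim_D$, since precomposition with the colimit structure map $X_m \to X'_m$ is the projection $\lim_k B_{m,k} \to B_{m,0}$. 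The diagonal $D$ is initial (cofinal for the limit): for each $(a,b)\in R$ the relevant slice is $\{(m,m):m\ge b\}\cong \bN_{\ge b}$, which is weakly contractible. Hence restriction along $D \hookrightarrow R$ is an equivalence of limits, yielding the desired $\Mor$‑equivalence. The main obstacle is the middle step: pinning down the twisted transition maps and assembling the $B_{m,k}$ into one coherent diagram on $R$ — rather than merely a levelwise map of towers — which requires both the continuity of the enriched $\Mor$ in its first variable and the compatibility of $\Omega$ with the colimit reindexing that defines the bonding maps of $X'$. The subsequent cofinality of the diagonal is then a short combinatorial verification.
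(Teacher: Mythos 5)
Your proof is correct and follows essentially the same route as the paper's: both sides are identified via \cref{compo}, $\Mor_\mC(X'_n,Y_n)$ is expanded as a limit using continuity of the enriched hom in its first variable together with the spectrum structure of $Y$, and the conclusion is a limit-interchange argument (your explicit diagonal-cofinality over the region $R$ is just a careful spelling-out of the paper's terse ``commuting limits with limits''). The only divergence is the last step: the paper produces the Gray-left adjoint via \cref{adj} by checking that $\Sp(\mC)$ is closed in $\Pre\Sp(\mC)$ under left and right cotensors (an explicit description it reuses in \cref{spectrii}), whereas you invoke the pointwise enriched-reflection criterion; both are valid.
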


	
	
	
	
	


	

The following is \cite[Corollary 6.1.19.]{heine2026stable}:

\begin{corollary}\label{domonos}

Let $\mC, \mD$ be reduced bioriented categories that admit endomorphisms and sequential colimits and $\phi: \mC \to \mD$ a  reduced bioriented functor that preserves sequential colimits and endomorphisms.
The induced reduced bioriented functor $$\phi_*: \Pre\Sp(\mC) \to \Pre\Sp(\mD)$$ descends to a reduced bioriented functor $\phi_!: \Sp(\mC) \to \Sp(\mD)$, which is the restriction of $\phi_*.$

If $\mC,\mD$ admit suspensions and $\phi$ preserves suspensions, $\phi_*$ and $\phi_!$ preserve suspensions. In this case $\phi_*$ preserves infinite suspension prespectra and $\phi_!$
preserves infinite suspension spectra.

\end{corollary}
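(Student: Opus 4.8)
The plan is to build $\phi_*$ as the map induced on limits, restrict it to genuine spectra to get $\phi_!$, and only then treat the suspension statements. First I would use that $\phi$ preserves endomorphisms: this furnishes a coherent equivalence $\phi \circ \Omega \simeq \Omega \circ \phi$ filling the square relating the structure maps $\Omega : \mC^\cop \to \mC$ and $\Omega : \mD^\cop \to \mD$. Post-composition with $\phi$ induces compatible reduced Gray-functors $\mC^{[1]} \to \mD^{[1]}$ and $\mC^\cop \to \mD^\cop$, and the above coherence makes them respect the pullbacks $\mC^\Omega = \mC^{[1]} \times_{\mC^{\{1\}}} \mC^\cop$ that assemble $\Pre\Sp$. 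The universal property of the limit in $\wedge\Cat\wedge$ then yields the reduced Gray-functor $\phi_* : \Pre\Sp(\mC) \to \Pre\Sp(\mD)$; concretely it sends $(X_n, \beta_n : X_n \to \Omega X_{n+1})$ to $(\phi X_n, \phi X_n \xrightarrow{\phi\beta_n} \phi\Omega X_{n+1} \simeq \Omega\phi X_{n+1})$, so that $\phi_*$ is computed level-wise.

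Next I would descend to genuine spectra. By \cref{embes} the embedding $\Sp(\mC) \subset \Pre\Sp(\mC)$ is the full subcategory of pre-spectra with invertible bonding morphisms. A Gray-functor preserves equivalences, so if each $\beta_n$ is an equivalence then so is each $\phi\beta_n$ followed by the comparison $\phi\Omega X_{n+1} \simeq \Omega\phi X_{n+1}$; hence $\phi_*$ carries $\Sp(\mC)$ into $\Sp(\mD)$. As the embedding is fully faithful, the restriction $\phi_! := \phi_*|_{\Sp(\mC)}$ is a reduced Gray-functor, and it is literally the restriction of $\phi_*$: no spectrification intervenes, since on genuine spectra the left adjoint of \cref{spectri} is the identity.

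For the suspension statements I would first record how $\Sigma$ is computed on each side. Since $\mC,\mD$ admit suspensions, $\Omega$ on $\Pre\Sp$ is level-wise, and its left adjoint $\Sigma_{\Pre\Sp}$ (via \cref{reduc}) is the level-wise $\Sigma$ with bonding maps built from $\Sigma\Omega \to \Omega\Sigma$; the hypothesis $\phi\Sigma \simeq \Sigma\phi$ then gives $\phi_*\Sigma \simeq \Sigma\phi_*$, provided one checks that units of $\Sigma \dashv \Omega$ are carried to units. On genuine spectra the situation is different: by \cref{rstab}, $\Omega_{\Sp}$ is the up-shift, so $\Sigma_{\Sp}$ is the inverse shift $(X_n)_n \mapsto (X_{n+1})_n$, and because $\phi_!$ acts level-wise it commutes with this shift, giving $\phi_!\Sigma_{\Sp} \simeq \Sigma_{\Sp}\phi_!$ with no further hypothesis. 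For the infinite suspension pre-spectrum $\Sigma^\infty(X) = (X, \Sigma X, \Sigma^2 X, \dots)$ with unit bonding maps, iterating $\phi\Sigma \simeq \Sigma\phi$ gives $\phi_*\Sigma^\infty(X) \simeq \Sigma^\infty(\phi X)$; since $\Sigma^\infty_{\Sp} = L \circ \Sigma^\infty_{\Pre\Sp}$ for the spectrification $L$, and $L$ is the level-wise sequential colimit $X'_n = \colim_k \Omega^k X_{n+k}$ of \cref{hiul}, the fact that $\phi$ preserves sequential colimits and endomorphisms yields $\phi_* L \simeq L \phi_*$, whence $\phi_!\Sigma^\infty_{\Sp} \simeq \Sigma^\infty_{\Sp}\phi$.

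The main obstacle I anticipate is not a hard estimate but coherent bookkeeping in the enriched setting: one must make the comparisons $\phi\Omega \simeq \Omega\phi$ and $\phi\Sigma \simeq \Sigma\phi$ compatible with bonding morphisms and with the $\Sigma \dashv \Omega$ (co)units at the level of reduced Gray-functors, and verify $\phi_* L \simeq L\phi_*$ as an equivalence of Gray-functors rather than merely objectwise. The subtlest conceptual point is that suspension on $\Sp$ is the shift (so $\phi_!$ preserves it automatically) while on $\Pre\Sp$ it is the level-wise $\Sigma$ (so $\phi_*$ preserves it only because $\phi$ preserves $\Sigma_\mC$) — the two descriptions agree only when $\mC$ is itself stable, and keeping them apart is essential to the argument.
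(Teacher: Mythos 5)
Your proposal is correct and follows essentially the same route as the paper: level-wise induction of $\phi_*$ on pre-spectra, restriction to spectra via preservation of endomorphisms, descent via the colimit formula for the spectrification of \cref{hiul} together with $\phi$ preserving sequential colimits and endomorphisms, and the factorization $\Sigma^\infty_{\Sp} = L \circ \Sigma^\infty_{\Pre\Sp}$ for the last claim. Your additional observation that $\Sigma$ on $\Sp(\mC)$ is the shift (so $\phi_!$ preserves it without hypothesis) is a harmless refinement of the paper's argument, which instead deduces this from $\phi_*$ preserving object-wise suspensions and descending.
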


	

	

	

\begin{remark}

The bioriented localization $\tau_0: \infty\Cat \rightleftarrows \infty\Grp$ of \cref{Grayspaces}
gives rise to a bioriented localization $$ \Cat\Pre\Sp \rightleftarrows \Pre\Sp(\infty\Grp)$$
that descends to a bioriented localization $\tau_0: \Cat\Sp \rightleftarrows \Sp(\infty\Grp)$ using \cref{spectri},
where the left adjoint takes the spectrification of the degreewise classifying space.

\end{remark}

\subsection{Categorical homology}


We introduce a categorical analogue of excision.


\begin{definition}Let $\mC, \mD$ be reduced oriented categories. 
A reduced oriented functor $\F: \mC \to \mD$ is excisive if for every oriented pushout square
\[
\begin{tikzcd}
X \ar{r}{} \ar{d}[swap]{} & 0 \ar[double]{dl}{} \ar{d}{} \\
0 \ar{r}[swap]{} & Y
\end{tikzcd}
\]
in $\mC$ the induced oriented square 
\[
\begin{tikzcd}
H(X) \ar{r}{} \ar{d}[swap]{} & 0 \ar[double]{dl}{} \ar{d}{} \\
0 \ar{r}[swap]{} & H(Y)
\end{tikzcd}
\]
in $\mD$ is an oriented pullback square.

\end{definition}

\begin{definition}
Let $\mC, \mD$ be reduced antioriented categories. A reduced antioriented functor $\F: \mC \to \mD$ is antiexcisive if the reduced oriented functor $\F^\co: \mC^\co \to \mD^\co$ is excisive.
	
\end{definition}

\begin{definition}
Let $\mC, \mD$ be reduced bioriented categories.  
A reduced bioriented functor $\F: \mC \to \mD$ is excisive if its underlying oriented functor is excisive.



	
	
\end{definition}

Now we are ready to state the categorical Eilenberg-Steenrod axioms.

\begin{construction}
	
Let $\mD$ be a reduced bioriented category that admits endomorphisms, $H: \infty\Cat_* \to \mD$ a reduced excisive oriented functor and $X \in \infty\Cat_*.$ 
For every even $\ell \geq 0$ the canonical morphism
$$ H(S^\ell)\wedge X \to H(S^\ell \wedge X)$$ gives rise to a canonical morphism
$$ \Omega^\ell(H(S^\ell)\wedge X) \to \Omega^\ell(H(S^\ell \wedge X)) \simeq \Omega^\ell(H(X \wedge S^\ell)) \simeq H(X).$$
The family of such morphisms for even $\ell \geq 0$ induces a canonical morphism:
\begin{equation}\label{eqzz}
\Omega^\infty(H(S^\bullet)\wedge X) \simeq \colim(H(S^0) \wedge X \to \Omega(H(S^1)\wedge X) \to \Omega^2(H(S^2)\wedge X) \to...)  \simeq \end{equation}	$$ \colim(H(S^0) \wedge X \to \Omega^2(H(S^2)\wedge X) \to \Omega^4(H(S^4)\wedge X) \to ...) \to H(X),$$
where the first equivalence is by \cref{spectri} and the second equivalence is by cofinality.
	
\end{construction}

\begin{definition}Let $\mD$ be a reduced bioriented category that admits endomorphisms.
A categorical homology theory is a reduced oriented functor $H: \infty\Cat_* \to \mD$ 
that satisfies the following axioms:
\begin{enumerate}
\item $H$ preserves small filtered colimits.

\vspace{1mm}

\item $H$ is excisive.

\vspace{1mm}

\item $H$ is spherical, i.e. for every $X \in \infty\Cat_*$ the morphism (\ref{eqzz}) is an equivalence.

\end{enumerate}
\end{definition}

	



\begin{example}\label{homol} Let $\mD$ be a reduced presentable bioriented category and $\E$ a spectrum in $\mD$. The oriented functor $$\infty\Cat_* \to \mD, X \mapsto \Omega^\infty(\E \wedge X)$$
is a categorical homology theory since it is reduced and excisive by stability (\cref{stab}), preserves small filtered colimits, and the morphism (\ref{eqzz}) is an equivalence, where we use that $\Omega^\infty(\E \wedge S^\n)\simeq \E_\n$ compatible with the bonding maps. We call $ \Omega^\infty(\E \wedge (-))$ the categorical homology theory represented by $\E.$

\end{example}

\begin{definition}

A categorical spectrum $\mC$ is connective if for every $\n \geq 0$ the $\infty$-category
$\Omega^\infty(\Sigma^\n(\mC)) $ is $\n$-1-connected in the sense of \cref{conn}.

\end{definition}

\begin{notation}
Let $\Cat\Sp_{\geq0} \subset \Cat\Sp $ be the full subcategory of connective categorical spectra.	

\end{notation}

\begin{construction}
	
The left adjoint of \ref{adjass} sends the presentably monoidal category $(\infty\Cat,\wedge)$
to a presentably monoidal structure on $\Mon_{\bE_\infty}(\infty\Cat)$
such that the free functor $$\infty\Cat_* \to \Mon_{\bE_\infty}(\infty\Cat)$$
is monoidal. The free functor makes $ \Mon_{\bE_\infty}(\infty\Cat)$ to a presentably monoidal category under $\infty\Cat$.

\end{construction}

The next is \cite[Lemma 5.3.7.]{heine2026stable}:

\begin{lemma}\label{deoo}
	
The endomorphisms functor $$\Omega: \Mon_{\bE_\infty}(\infty\Cat)^\cop \to \Mon_{\bE_\infty}(\infty\Cat) $$ admits a left adjoint reduced bioriented functor
$$B: \Mon_{\bE_\infty}(\infty\Cat) \to \Mon_{\bE_\infty}(\infty\Cat)^\cop$$
that preserves finite products and induces an equivalence. The essential image of $B$ precisely consists of the connected $\infty$-categories with distinguished object.
The underlying functor of $B$ is induced by the finite products preserving functor
$B: \Mon(\infty\Cat) \to \infty\Cat $ of \cref{deloop}.
	
\end{lemma}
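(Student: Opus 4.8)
The plan is to obtain the underlying ordinary adjunction $B \dashv \Omega$ on $\mV:=\Mon_{\bE_\infty}(\infty\Cat)$ by applying $\Mon_{\bE_\infty}(-)$ to the directed delooping adjunction of \cref{hhnl}, and then to promote it to a Gray-adjunction using \cref{reduc} and \cref{adj}. The key observation is that both functors in the adjunction $B:\Mon(\infty\Cat) \rightleftarrows \infty\Cat_*:\Omega$ of \cref{hhnl} preserve finite products: the right adjoint $\Omega$ preserves all limits, and $B$ preserves finite products by \cref{hhnl}. Hence the adjunction, the fully faithfulness of $B$, and the description of its essential image are all compatible with passing to commutative monoid objects.

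First I would apply $\Mon_{\bE_\infty}(-)$ to this adjunction. A product-preserving functor induces a functor on commutative monoid objects, and $\Mon_{\bE_\infty}(-)$ carries an adjunction of product-preserving functors to an adjunction. I then identify the two resulting categories with $\mV$: on the one hand $\Mon_{\bE_\infty}(\infty\Cat_*)\simeq \Mon_{\bE_\infty}(\infty\Cat)=\mV$, since the unit of a commutative monoid supplies its basepoint; on the other hand $\Mon_{\bE_\infty}(\Mon(\infty\Cat))\simeq \Mon_{\bE_\infty}(\infty\Cat)=\mV$ by Dunn additivity $\bE_1\otimes\bE_\infty\simeq\bE_\infty$ \cite[Theorem 5.1.2.2]{lurie.higheralgebra}. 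Under these identifications $\Mon_{\bE_\infty}(\Omega)$ is the endomorphisms functor of $\mV$ and $\Mon_{\bE_\infty}(B)$ is a left adjoint to it. Since $\Mon_{\bE_\infty}(-)$ preserves fully faithful product-preserving functors and is compatible with the forgetful functors to underlying objects, $\Mon_{\bE_\infty}(B)$ is fully faithful, preserves finite products, and its essential image consists of the connected $\infty$-categories with distinguished object; moreover its underlying functor is, by construction, the functor $B$ of \cref{hhnl}, which is the last assertion of the lemma.

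It remains to lift this to the Gray-enriched setting. As $\mV$ is a reduced Gray-category that is cotensored over $\infty\Cat_*$, it admits endomorphisms (the cotensor with $S^1\simeq B\bN$), so \cref{reduc}(2) refines $\Omega$ to a reduced Gray-functor $\mV^\cop\to\mV$. By \cref{adj} this Gray-functor admits a left adjoint Gray-functor precisely when it preserves left and right cotensors and its underlying functor admits a left adjoint: the former holds because $\Omega$ is an enriched limit (a cotensor), and the latter was established above. This yields the reduced Gray-functor $B:\mV\to\mV^\cop$, whose underlying functor is the $B$ just constructed. Finite-product preservation is a conical property detected on underlying categories, and $B$ is Gray-fully-faithful because the unit $\id\to\Omega B$ is a Gray-natural transformation that is an equivalence on underlying objects and hence an equivalence; its essential image is the connected objects exactly as on the underlying level.

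The step I expect to be the main obstacle is the identification, compatible with the Gray-enrichment, of $\Mon_{\bE_\infty}(\Omega)$ with the Gray-endomorphisms functor of $\mV$ — equivalently, checking that applying $\Mon_{\bE_\infty}(-)$ to the directed endomorphisms functor of $\infty\Cat$ reproduces the cotensor with $S^1$ on $\Mon_{\bE_\infty}(\infty\Cat)$ under the Dunn equivalence — together with the parallel check that the enriched left adjoint supplied by \cref{adj} agrees with $\Mon_{\bE_\infty}(B)$. Once this compatibility is secured, the remaining assertions reduce to statements about underlying categories and morphism objects that are already covered by \cref{hhnl}, \cref{reduc}, and \cref{adj}.
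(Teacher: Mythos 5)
Your proposal is correct and follows essentially the same route as the paper: apply $\Mon_{\bE_\infty}(-)$ to the delooping adjunction of \cref{hhnl}, identify $\Mon_{\bE_\infty}(\Mon(\infty\Cat))$ and $\Mon_{\bE_\infty}(\infty\Cat_*)$ with $\Mon_{\bE_\infty}(\infty\Cat)$, observe that the resulting right adjoint underlies the Gray-endomorphisms functor, and then produce the Gray left adjoint and identify it with $B$ by uniqueness of left adjoints. Your explicit appeal to \cref{reduc} and \cref{adj} for the enriched lift is just a slightly more detailed version of the step the paper asserts directly.
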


\begin{example}
	
Let $\mC$ be a symmetric monoidal $\infty$-category. The categorical spectrum $$B^\infty(\mC):= (\mC, B\mC, B^2\mC, ...)$$ with the natural bonding maps is connective.
	
\end{example}


The next is \cite[Theorem 5.3.9.]{heine2026stable}:

\begin{theorem}\label{symsp}

There is a left adjoint monoidal functor $$ \Mon_{\bE_\infty}(\infty\Cat) \to \Mon_{\bE_\infty}(\Cat\Sp) \simeq \Cat\Sp $$
that sends a symmetric monoidal $\infty$-category $\mC$ to $B^\infty(\mC)$ and induces a monoidal equivalence
$$ \Mon_{\bE_\infty}(\infty\Cat) \simeq \Cat\Sp_{\geq0}.$$

\end{theorem}	

\begin{corollary}\label{domon}
Let $\mC$ be a reduced bioriented category that admits suspensions, endomorphisms and sequential colimits. Every reduced bioriented functor $\phi: \mC \to \Mon_{\bE_\infty}(\infty\Cat)$ that preserves suspensions, endomorphisms and sequential colimits also preserves oriented pushouts.

\end{corollary}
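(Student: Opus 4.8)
The plan is to factor $\phi$ through the Gray-category $\Sp$ of categorical spectra, where the ambient stability of $\Sp$ together with \cref{preserv} turns oplax pushouts into something manageable, and then to descend the conclusion back to $\Mon_{\bE_\infty}(\infty\Cat)$. Write $J:=\Mon_{\bE_\infty}(\Sigma^\infty)\colon \Mon_{\bE_\infty}(\infty\Cat)\to\Sp$ for the fully faithful left adjoint reduced Gray-functor onto the connective spectra of \cref{symsp} (see also \cref{rero}), and set $\Psi:=J\circ\phi\colon \mC\to\Sp$. Since $J$ is a left adjoint Gray-functor it preserves oplax left pushouts, and being fully faithful it reflects them; as $\Mon_{\bE_\infty}(\infty\Cat)$ is presentable it admits oplax pushouts, so a short diagram chase shows that $\phi$ preserves oplax pushouts if and only if $\Psi$ does. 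It therefore suffices to prove that $\Psi$ preserves oplax \emph{left} pushouts, since an oplax right pushout is the oplax left pushout of the swapped span.

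First I would record that $\Psi$ preserves suspensions, endomorphisms and sequential colimits, as each of $\phi$ and $J$ does. By \cref{domonos} the functor $\Psi$ then induces a reduced Gray-functor $\Psi_!\colon \Sp(\mC)\to\Sp(\Sp)$ that preserves suspensions and infinite suspension spectra. Next I would build the infinite suspension functor $\Sigma^\infty_\mC\colon \mC\to\Sp(\mC)$ as the composite of the infinite suspension pre-spectrum functor, which is left adjoint to $\gamma$ by \cref{infs}, with spectrification, which is left adjoint by \cref{spectri}; being a composite of left adjoint reduced Gray-functors it is itself a left adjoint reduced Gray-functor and so preserves oplax left pushouts. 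Because $\Sp$ is stable (\cref{stab}, \cref{rstab}) the functor $\Sigma^\infty_\Sp\colon \Sp\to\Sp(\Sp)$ is an equivalence with inverse the projection $q:=\Omega^\infty$, and the fact that $\Psi_!$ preserves infinite suspension spectra gives $\Psi_!\circ\Sigma^\infty_\mC\simeq\Sigma^\infty_\Sp\circ\Psi$. Composing with $q$ yields the factorization
\[
\Psi\;\simeq\;q\circ\Psi_!\circ\Sigma^\infty_\mC .
\]

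It then remains to see that $\Psi_!$ preserves oplax left pushouts. Here I would use \cref{sptens} together with the observation that $\Sp(\mC)$ is oplax quasi-stable: one has $\Sp(\mC)\simeq\mC\ot_{\infty\Cat_*}\Sp$ and $\Sp(\Sp)\simeq\Sp\ot_{\infty\Cat_*}\Sp\simeq\Sp$, both of which are spectral right Gray-categories, and $\Psi_!$, preserving endomorphisms, refines to a spectral right Gray-functor. By \cref{preserv}(2) it then preserves oplax pushouts (and pullbacks). Since $\Sigma^\infty_\mC$ and $q$ preserve oplax left pushouts as well, the displayed factorization shows $\Psi$ does, and hence so does $\phi$.

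The main obstacle is the final paragraph: one must genuinely exhibit $\Psi_!$ as a spectral right Gray-functor so that \cref{preserv} applies, while \cref{sptens} is stated only for presentable Gray-categories. I expect to handle the presentability point by the standard device (used in the proofs of \cref{repp} and \cref{laxfib}) of embedding $\mC$ into a presentable reduced Gray-category by a reduced Gray-embedding preserving endomorphisms, suspensions, sequential colimits and oplax pushouts via \cite[Lemma 3.80., Remark 3.85.]{heine2024higher}; and the right $\Sp$-linearity of $\Psi_!$ should follow from its compatibility with the $S^1$-action (equivalently, preservation of suspensions) together with preservation of sequential colimits, since the right $\Sp$-enrichment on $\Sp(\mC)$ is determined by the invertibility of the suspension-endomorphism equivalence on its right morphism objects.
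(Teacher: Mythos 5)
Your proposal is correct and its skeleton coincides with the paper's: both arguments stabilize $\phi$ via \cref{domonos}, identify the target of the stabilized functor with $\Sp$ (you via $\Sp(\Sp)\simeq\Sp$, the paper via $\Sp(\Mon_{\bE_\infty}(\infty\Cat))\simeq\Mon_{\bE_\infty}(\Sp)\simeq\Sp$ from \cref{hhoo}, which amounts to the same thing), and then exploit that $B^\infty$ is a left adjoint Gray-embedding detecting oplax pushouts together with the compatibility $\phi_!\circ\Sigma^\infty\simeq B^\infty\circ\phi$. Where you genuinely diverge is the final step: the paper disposes of it in one line, asserting that $\phi_!\circ\Sigma^\infty$ preserves oplax pushouts ``as a left adjoint Gray-functor'', whereas you split the composite and justify that the middle functor $\Psi_!$ preserves oplax pushouts by refining it to a spectral right Gray-functor and invoking \cref{preserv}. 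Since $\phi_!$ is merely induced by $\phi$, which is not assumed to admit a right adjoint, the paper's one-line justification is not literally available, and your detour through \cref{preserv} is what actually carries the argument; it buys rigor at the cost of needing the spectral refinement of $\Sp(\mC)$. On that last point your presentability workaround via \cref{sptens} is unnecessary: $\Sp(\mC)$ is oplax quasi-stable by \cref{rstab} with no presentability hypothesis, so \cref{spectral0} (or the uniqueness statement \cref{specenr}, which is logically independent of \cref{domon}) hands you the spectral right Gray-structure on $\Sp(\mC)$ and the refinement of the suspension-preserving functor $\Psi_!$ directly. With that substitution your sketch of the final paragraph closes completely.
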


\begin{proof}By \cref{domonos} the reduced bioriented functor $\phi_* :  \Pre\Sp(\mC) \to \Pre\Sp(\Mon_{\bE_\infty}(\infty\Cat))$ 
restricts to a reduced bioriented functor $$\phi_!:  \Sp(\mC) \to \Sp(\Mon_{\bE_\infty}(\infty\Cat)) \simeq  \Mon_{\bE_\infty}(\Sp) \simeq \Sp,$$ where the last equivalence is by \cref{hhoo},
that preserves infinite suspension spectra.
By \cref{symsp} the infinite suspension spectrum bioriented functor of $  \Mon_{\bE_\infty}(\Sp)$ is the bioriented embedding
$$B^\infty:  \Mon_{\bE_\infty}(\Sp) \to \Sp(\Mon_{\bE_\infty}(\infty\Cat)) \simeq  \Mon_{\bE_\infty}(\Sp) \simeq \Sp, $$ which as a left adjoint bioriented embedding detects oriented pushouts.
In other words there is a canonical equivalence $$\phi_! \circ \Sigma^\infty \simeq B^\infty \circ \phi.$$
Hence $\phi$ preserves oriented pushouts because $ \phi_! \circ \Sigma^\infty$ preserves oriented pushouts as a left adjoint bioriented functor.

\end{proof}

\begin{definition}\emph{}
Let $\mC$ be a presentably monoidal category.
A rig in $\mC$ is an associative algebra in $ \Mon_{\bE_\infty}(\mC).$

\end{definition}

\begin{definition}

A rig is a rig in $\Set.$
A rig space is a rig in $\infty\Grp.$

\end{definition}

For the next definition we use that $\Cat\Sp$ carries a monoidal structure
(\cref{unicity}).

\begin{definition}A categorical rig spectrum is an associative algebra in $\Cat\Sp.$

\end{definition}




\begin{notation}Let $\mC$ be a presentably monoidal category.
Let $$ \Rig(\mC):= \Alg(\Mon_{\bE_\infty}(\mC)).$$ 

\end{notation}

\begin{example}\emph{}
\begin{itemize}

\item The natural numbers $(\bN,+,\bullet)$ are a rig.	

\item The free symmetric monoidal category $\coprod_{\n \geq 0} B\Sigma_\n$ generated by one object is a rig space.

\end{itemize}

\end{example}

\begin{example}
	
By adjunction \ref{adjass} the canonical symmetric monoidal localizations $$ \infty\Cat \rightleftarrows \infty\Grp, \ \pi_0: \infty\Grp \rightleftarrows \Set$$ give rise to monoidal localizations $$ \Mon_{\bE_\infty}(\infty\Cat) \rightleftarrows \Mon_{\bE_\infty}(\infty\Grp), \ \Mon_{\bE_\infty}(\infty\Grp) \rightleftarrows \Mon_{\bE_\infty}(\Set), $$
which give rise to localizations $$ \Rig(\infty\Cat) \rightleftarrows \Rig(\infty\Grp), \ \Rig(\infty\Grp) \rightleftarrows \Rig(\Set). $$

\end{example}

\begin{remark}

The left adjoint monoidal embedding $$B^\infty: \Mon_{\bE_\infty}(\infty\Cat) \simeq \Mon_{\bE_\infty}(\infty\Cat_*) \to \Mon_{\bE_\infty}(\Cat\Sp) \simeq \Cat\Sp$$
of \cref{symsp} induces a left adjoint embedding
$$B^\infty: \Rig(\infty\Cat) \to \Rig(\Cat\Sp).$$

\end{remark}

	
	


\begin{definition}\label{rigit}
The categorical Eilenberg-MacLane spectrum functor $H$ is the following composition of lax monoidal embeddings:
$$\Mon_{\bE_\infty}(\Set) \subset \Mon_{\bE_\infty}(\infty\Grp) \xrightarrow{B^\infty} \Cat\Sp,$$
which gives rise to an embedding
$$\Rig(\Set) \subset \Rig(\Cat\Sp).$$

\end{definition}



    






\begin{definition}Let $R$ be a commutative monoid.
Categorical $R$-homology, denoted by $ H(-; R)$, is the categorical homology theory associated to the spectrum $H(R).$	
For $R=\bN$ we skip $R.$
	
\end{definition}

\begin{remark}

By definition, categorical $R$-homology factors as the oriented functor
$H(R) \wedge (-): \infty\Cat_* \to {{H(R)}\mathrm{-}\Mod(\Cat\Sp)} $
followed by the oriented forgetful functor ${{H(R)}\mathrm{-}\Mod(\Cat\Sp)} \to \Cat\Sp $
followed by the oriented functor $\Omega^\infty: \Cat\Sp \to \infty\Cat_*.$

\end{remark}

\begin{example}\label{homosph} Let $R$ be a commutative monoid and $\n \geq 0.$ The categorical $R$-homology of $S^\n$ is 
$$ \Omega^\infty(H(R)\wedge  S^\n) \simeq \Omega^\infty(H(R)\wedge \Sigma^\n(S^0)) \simeq  \Omega^\infty(\Sigma^\n(H(R)\wedge S^0)) \simeq \Omega^\infty(\Sigma^\n(H(R))) \simeq B^\n(R).$$

\end{example}






\subsection{Strict symmetric monoidal $\infty$-categories}

\begin{notation}

Let $ \C\mon^{\mathrm{free},\fin} \subset \C\mon$ be the full subcategory of commutative monoids free on a finite set.

\end{notation}

\begin{remark}\label{rempuy}
The category $\C\mon$ is preadditive and the full subcategory $\C\mon^{\mathrm{free},\fin} \subset \C\mon$ is generated by $\bN$ under finite products. In particular, $$\C\mon^{\mathrm{free},\fin} \subset \C\mon$$ is closed under finite products and therefore is preadditive, too.

\end{remark}

\begin{remark}
Since $\bN$ is the tensor unit of the canonical closed symmetric monoidal structure on $\C\mon$,
the canonical closed symmetric monoidal structure on $\C\mon$ restricts to $\C\mon^{\mathrm{free},\fin}$ and every object of $\C\mon^{\mathrm{free},\fin} $ is dualizable in $ \C\mon$ with respect to the canonical closed symmetric monoidal structure. Hence taking the dual provides a symmetric monoidal equivalence 
$$(\C\mon^{\mathrm{free},\fin})^\op \simeq \C\mon^{\mathrm{free},\fin}.$$

\end{remark}

The category $ (\C\mon^{\mathrm{free},\fin})^\op $ is the Lawvere theory for commutative monoids.
So in view of \cref{rempuy} we make the following definition:

\begin{notation}

Let $\mC$ be a category that admits finite products.
The category of strict commutative monoid objects in $\mC$ is the full subcategory $$\C\mon(\mC) \subset \Fun(\C\mon^{\mathrm{free},\fin} ,\mC)$$ of functors $\C\mon^{\mathrm{free},\fin} \to \mC$ preserving finite products.

\end{notation}

\begin{remark}\label{Preaddd}
The category $\C\mon(\mC)$ is preadditive by \cite[Corollary 2.4.]{gepner2016universality}. 
\end{remark}

\begin{notation}

The forgetful functor $\C\mon(\mC) \to \mC$ is the restriction of the functor  evaluating at $\bN$:
$$ \Fun(\C\mon^{\mathrm{free},\fin},\mC) \to \mC.$$

\end{notation}

\begin{lemma}\label{sift1}\label{sift2} Let $\mC$ be a category that admits finite products
and $\mK$ a collection of categories.

\begin{enumerate}
\item If $\mC$ admits $\mK$-indexed limits, also the category $ \C\mon(\mC)$ admits $\mK$-indexed limits and the forgetful functor $\C\mon(\mC) \to \mC$ preserves $\mK$-indexed limits. 

\item If $\mK$ consists of sifted categories and $\mC$ admits $\mK$-indexed colimits, also $ \C\mon(\mC)$ admits $\mK$-indexed colimits and the forgetful functor $\C\mon(\mC) \to \mC$ preserves $\mK$-indexed colimits. 

\end{enumerate}

\end{lemma}

\begin{proof}

(1) If $\mC$ admits $\mK$-indexed limits, the full subcategory $$ \C\mon(\mC)\subset \Fun(\C\mon^{\mathrm{free},\fin},\mC) $$ is closed under $\mK$-indexed limits since $\mK$-indexed limits commute with finite products.
This gives (1).

(2) If $\mK$ consists of sifted categories and $\mC$ admits $\mK$-indexed colimits, the full subcategory $$ \C\mon(\mC)\subset \Fun(\C\mon^{\mathrm{free},\fin},\mC) $$ is closed under $\mK$-indexed colimits since $\mK$-indexed colimits commute with finite products as every category of $\mK$ is sifted. This implies (2).

\end{proof}

\begin{notation}

Let $\mC$ be a category that admits finite products.
The category of $\bE_\infty$-monoid objects in $\mC$ is the full subcategory
$\Mon_{\bE_\infty}(\mC) \subset \Fun(\Fin_* ,\mC)$ of functors $X: \Fin_* \to \mC$ 
such that for every $\n \geq 0$ the following induced morphism is an equivalence: $$ X(\{1,...,\n\}_+) \to \prod_{\bi=1}^\n X(\{1\}_+) $$

\end{notation}

\begin{remark}\label{Preaddd2}
The category $\Mon_{\bE_\infty}(\mC)$ is preadditive by \cite[Proposition 2.3.]{gepner2016universality}. 
\end{remark}

\begin{remark}

The same lemma as \cref{sift1} with the same proof holds for $\Mon_{\bE_\infty}$ instead of $\Cmon.$

\end{remark}

\begin{remark}\label{locquip}

Let $\mC$ be a $\kappa$-compactly generated category for some regular cardinal $\kappa.$
An object of $ \Fun(\C\mon^{\mathrm{free},\fin},\mC)$ belongs to $\C\mon(\mC)$
if and only if it is local with respect to the set of morphisms
$$\coprod_{\bi=1}^\n \C\mon^{\mathrm{free},\fin}(\bN,-) \ot Z  \to  \C\mon^{\mathrm{free},\fin}(\bN^{\times \n},-) \ot Z $$
for every $Z \in \mC^\kappa$ if and only if it is local with respect to the set of morphisms
$$\coprod_{\ell=1}^\bk \C\mon^{\mathrm{free},\fin}(\bN^{\times \n_\ell},-) \ot Z  \to  \C\mon^{\mathrm{free},\fin}(\bN^{\times{\n_1 + ... + \n_\bk}},-) \ot Z $$
for every $Z \in \mC^\kappa$ and $\bk, \n_1, ..., \n_\bk \geq 0$.
Thus $\C\mon(\mC) \subset  \Fun(\C\mon^{\mathrm{free},\fin},\mC)$ is an accessible localization.

Similarly, an object of $ \Fun(\Fin_*,\mC)$ belongs to $\Mon_{\bE_\infty}(\mC)$
if and only if it is local with respect to the set of morphisms
$$\coprod_{\bi=1}^\n \Fin_*(\{1\}_+) \ot Z  \to \Fin_*(\{1,...,\n\}_+) \ot Z $$
for every $Z \in \mC^\kappa$ if and only if it is local with respect to the set of morphisms
$$\coprod_{\ell=1}^\bk \Fin_*(\{1,..., \n_\ell\},-) \ot Z  \to  \Fin_*(\{1,...,\n_1 + ... + n_\bk\},-) \ot Z $$
for every $Z \in \mC^\kappa$ and $\bk, \n_1, ..., \n_\bk \geq 0$.
Thus $\Mon_{\bE_\infty}(\mC) \subset   \Fun(\Fin_*,\mC)$ is an accessible localization.

\end{remark}

\begin{notation}\label{heuty}
Let $\mC$ be a presentable category. By \cref{locquip} the full subcategory $$\C\mon(\mC) \subset \Fun(\C\mon^{\mathrm{free},\fin},\mC)$$ is an accessible localization.
The functor $ \Fun(\C\mon^{\mathrm{free},\fin},\mC) \to \mC$ evaluating at $\bN$ admits a left adjoint $\phi$ that sends an object $X $ of $\mC$ to $\C\mon^{\mathrm{free},\fin}(\bN,-) \times X.$ 
Consequently, the forgetful functor $ \C\mon(\mC) \to \mC$ admits a left adjoint $$\Sym: \mC \to \C\mon(\mC) $$ that factors as $\phi$ followed by the localization. 
We call $\Sym$ the free strict commutative monoid functor.

Since $\C\mon(\mC)$ is presentable, also the forgetful functor $ \C\mon(\mC) \to \mC_*$ admits a left adjoint $$\widetilde{\Sym}: \mC _* \to \C\mon(\mC).$$ 
We call $\widetilde{\Sym}$ the free reduced strict commutative monoid functor.

\end{notation}

\begin{remark}\label{naturalis}

If $\mC$ is cartesian closed, the functor $(-)\times *: \infty\Grp \to \mC$ preserves finite products.
In this case $\phi(*)\simeq \C\mon^{\mathrm{free},\fin}(\bN,-) \times *$ preserves finite products and so is already local and so agrees with $\Sym(*).$ 
In particular, the image of $\Sym(*) \simeq \widetilde{\Sym}(*\coprod *)$ in $\mC$ under the forgetful functor is $$\C\mon^{\mathrm{free},\fin}(\bN,\bN) \times * \simeq \bN \times *.$$

\end{remark}

\begin{remark}\label{holuk}
The category $\Set_*$ of pointed sets is a symmetric monoidal category via the smash product (\cite[Lemma 3.69.]{gepner2025oriented}).
The smash monoidal structure restricts to $\Fin_*$. By \cite[Theorem 4.6.]{gepner2016universality} the free functor $\Set_* \to \C\mon$ is symmetric monoidal. Hence the symmetric monoidal structure on $\C\mon$ restricts to
$\C\mon^{\mathrm{free},\fin}$, the image of $\Fin_*$ under the free functor, and the symmetric monoidal free functor $\Set_* \to \C\mon$ restricts to a symmetric monoidal functor $\Fin_* \to \C\mon^{\mathrm{free},\fin}. $
\end{remark}

\begin{notation}\label{restrii}

For every category $\mC$ that admits finite products the restricted free functor $\Fin_* \to \C\mon^{\mathrm{free},\fin}$ induces a functor $ \Fun(\C\mon^{\mathrm{free},\fin}, \mC) \to \Fun(\Fin_*,\mC),$ which restricts to a functor \begin{equation}\label{forgeto}
\nu: \C\mon(\mC) \to \Mon_{\bE_\infty}(\mC).
\end{equation}

\end{notation}

\begin{remark}
Since the category $ (\C\mon^{\mathrm{free},\fin})^\op $ is the Lawvere theory for commutative monoids, for every $(1,1)$-category $\mC$ that admits finite products the forgetful functor 
(\ref{forgeto}) is an equivalence.

\end{remark}

\begin{lemma}\label{rerul}
	
Let $\mC$ be a category that admits finite products.

\begin{enumerate}
\item By \cref{sift1}, \cref{sift2} the categories $\C\mon(\mC), \Mon_{\bE_\infty}(\mC)$ admit finite products and the forgetful functors 
\begin{equation*}\label{foeti}
\C\mon(\mC) \to \mC, \ \Mon_{\bE_\infty}(\mC) \to \mC \end{equation*}
and (\ref{forgeto}) preserve finite products and finite coproducts.

\vspace{1mm}

\item If $\mC$ admits small sifted colimits, the functor (\ref{forgeto}) preserves small colimits.

\end{enumerate}
	
\end{lemma}

\begin{proof}

(1): By \cref{sift1}, \cref{sift2} the categories $\C\mon(\mC), \Mon_{\bE_\infty}(\mC)$ admit finite products and the conservative forgetful functors 
\begin{equation*}\label{foeti}
\C\mon(\mC) \to \mC, \ \Mon_{\bE_\infty}(\mC) \to \mC \end{equation*} preserve finite products. So the functor (\ref{forgeto}) preserves finite products and so also finite coproducts because the categories
$ \C\mon(\mC), \Mon_{\bE_\infty}(\mC)$ are preadditive by \cref{Preaddd}, \cref{Preaddd2}.

(2): If $\mC$ admits small sifted colimits, the functor (\ref{forgeto}) preserves small sifted colimits because the categories $ \C\mon(\mC),$$  \Mon_{\bE_\infty}(\mC)$ admit small sifted colimits and the forgetful functors (\ref{foeti}) preserves small sifted colimits (\cref{sift2}). So the functor (\ref{forgeto}) preserves small colimits since it preserves finite coproducts by (1).

\end{proof}

\begin{proposition}\label{laxcomp}
Let $\mC$ be a presentably monoidal category.
The categories $\C\mon(\mC), \Mon_{\bE_\infty}(\mC)$ carry presentably monoidal structures and the restriction $$\C\mon(\mC) \to \Mon_{\bE_\infty}(\mC)$$ of \cref{restrii} and the 
forgetful functors
$$ \C\mon(\mC) \to \mC, \ \Mon_{\bE_\infty}(\mC) \to \mC $$ are lax monoidal and admit a monoidal left adjoint. 

\end{proposition}

\begin{proof}

The monoidal structures on $\Fin_*$ and $ \C\mon^{\mathrm{free},\fin}$ of \cref{holuk} give rise via Day-convolution to monoidal structures
on $$\Fun(\Fin_*,\mC), \ \Fun(\C\mon^{\mathrm{free},\fin},\mC),$$ 
which are compatible with the localizations of \cref{locquip} by the description of generating local equivalences of \cref{locquip}.
Hence the full subcategories $$ \Mon_{\bE_\infty}(\mC) \subset \Fun(\Fin_*,\mC), \ \C\mon(\mC) \subset \Fun(\C\mon^{\mathrm{free},\fin},\mC)$$ inherit monoidal structures such that
the full subcategory inclusions are lax monoidal and admit monoidal left adjoints.

By properties of the Day-convolution \cite[Recollection 2.2.(3)]{heine2017topological} the functor $$ \Fun(\C\mon^{\mathrm{free},\fin}, \mC) \to \Fun(\Fin_*,\mC)$$ induced by the symmetric monoidal functor $\Fin_* \to \C\mon^{\mathrm{free},\fin}$ and the functors
$$ \Fun(\C\mon^{\mathrm{free},\fin}, \mC) \to \mC, \Fun(\Fin_*,\mC) \to \mC $$ evaluating at 
$\bN, \{0\}_+$, respectively, are lax monoidal and admit a monoidal left adjoint. 
Thus also the restriction $\C\mon(\mC) \to \Mon_{\bE_\infty}(\mC)$ of \cref{restrii} and the 
forgetful functors
$$ \C\mon(\mC) \to \mC, \ \Mon_{\bE_\infty}(\mC) \to \mC $$ are lax monoidal and admit a monoidal left adjoint. 

\end{proof}

\begin{corollary}\label{monoidalos}

The categories $$ \C\mon(\infty\Cat), \ \Mon_{\bE_\infty}(\infty\Cat)$$ carry canonical presentably monoidal structures induced by the Gray-tensor product on $\infty\Cat$.
The free functors $$\infty\Cat \to \C\mon(\infty\Cat), $$$$\infty\Cat \to \Mon_{\bE_\infty}(\infty\Cat), $$$$ \Mon_{\bE_\infty}(\infty\Cat) \to \C\mon(\infty\Cat)$$ are monoidal.

\end{corollary}



\subsection{Strict symmetric powers of $\infty$-categories}

\begin{definition}

Let $\n \geq 0$. The functor $\D^\n: \infty\Grp \to \infty\Grp $ of strict $\n$-th symmetric power is the unique small sifted colimits preserving extension of the functor $$ \Fin \xrightarrow{(-)^{\times \n}_{\Sigma_\n} } \Fin \subset \infty\Grp.$$
\end{definition}

\begin{remark}
Let $\n \geq 0$.
The functor $\D^\n: \infty\Grp \to \infty\Grp$ preserves the final object and so gives rise to a functor
$\infty\Grp_* \to \infty\Grp_*$ that preserves small sifted colimits, and so is uniquely determined by its restriction to $\Fin_*.$
\end{remark}
\begin{construction}
Let $\n \geq 0$ and $\gamma_\n: \Fin_* \to \Fin_*$ the functor sending $X $ to $X^{\times \n}$. 
The canonical natural transformation $* \to  \id $ of functors $\Fin_* \to \Fin_*$, which is trivially $\Sigma_\n$-equivariant, and the identity of $\gamma_\n $ give rise to a $\Sigma_\n$-equivariant natural transformation $ \gamma_\n \to \gamma_\n \times \id \simeq \gamma_{\n+1} $ of functors $\Fin_* \to \Fin_* $. Here we view $\gamma_{\n+1}$ as $\Sigma_\n$-equivariant by restriction along the inclusion $\Sigma_\n \subset \Sigma_{\n+1}$ that extends a bijection to the bijection preserving the maximum.
The natural transformation $ \gamma_\n \to \gamma_{\n+1} $ yields a natural transformation $ \D^\n_{\mid\Fin_*} \to \D^{\n+1}_{|\Fin_*}$ of functors $\Fin_* \to \Fin_* $ that uniquely extends to a natural transformation $$ \D^\n \to \D^{\n+1}$$ of functors $\infty\Grp_* \to \infty\Grp_* $.

\end{construction}

\begin{definition}
The functor $\D^\infty: \infty\Grp \to \infty\Grp $ of strict $\infty$-th symmetric power is the sequential colimit
$$ \D^0 \to \D^1 \to \D^2  \to .... $$

\end{definition}

\begin{lemma}\label{polujk}

\begin{enumerate}
\item There is an equivalence $$ \Sym \simeq \coprod_{\n \geq 0} \D^\n$$
of endofunctors of $\infty\Grp$ under $\id \simeq \D^1. $	

\item There is an equivalence $$ \widetilde{\Sym} \simeq \D^\infty$$ of endofunctors of $\infty\Grp_*$ under $\id \simeq \D^1. $	

\end{enumerate}	

\end{lemma}

\begin{proof}

(1): The functor $\Sym: \infty\Grp \to \infty\Grp$ sends the final space to $\bN$ and so sends any finite set $\{1,...,\n\} $ to $\bN^{\times \n}$ by preadditivity of $\C\mon(\infty\Grp).$ 
So $\Sym$ sends finite sets to sets and therefore sends a finite set to the strict commutative algebra generated by it. Thus by \cite[Corollary 3.1.3.5.]{lurie.higheralgebra} there is a canonical equivalence $$ \coprod_{\n \geq 0} (-)^{\times \n}_{\Sigma_\n} \simeq \Sym_{\mid \Fin}$$ of functors $\Fin \to \Set. $
Since the embedding $\Set \subset \infty\Grp$ preserves small coproducts, the latter equivalence uniquely extends to an equivalence $ \coprod_{\n \geq 0} \D^\n \simeq \Sym$ of functors $\infty\Grp \to \infty\Grp. $

(2): Every finite pointed set is isomorphic to $\{1,...,\n\}_+ $ for some $\n \geq 0$.
Thus the functor $\widetilde{\Sym}: \infty\Grp \to \infty\Grp$ sends finite pointed sets to sets and so sends a finite pointed set to the strict pointed commutative algebra generated by it. Thus by \cite[Corollary 3.1.3.5.]{lurie.higheralgebra} there is a canonical equivalence $$\D^\infty_{|\Fin_*} \simeq \colim_{\n \geq 0} (-)^{\times \n}_{\Sigma_\n} \simeq \widetilde{\Sym}_{\mid \Fin_*}$$ of functors $\Fin_* \to \Set_*. $
Since the embedding $\Set_* \subset \infty\Grp_*$ preserves small filtered colimits, the latter equivalence uniquely extends to an equivalence $\D^\infty \simeq \widetilde{\Sym}$ of functors $\infty\Grp_* \to \infty\Grp_*. $

\end{proof}

\begin{definition}
Let $ \n \geq 0 $. The functor $\D^\n: \infty\Cat \to \infty\Cat$ of strict $\n$-th symmetric power is  $$  \infty\Cat \subset \Fun(\Theta^\op, \infty\Grp) \xrightarrow{\D^\n_*} \Fun(\Theta^\op, \infty\Grp)  \to \infty\Cat.$$

\end{definition}

\begin{definition}
Let $ \n \geq 0 $. The functor $\D^\infty: \infty\Cat_* \to \infty\Cat_*$ of strict $\infty$-th symmetric power is $$  \infty\Cat_* \subset \Fun(\Theta^\op, \infty\Grp)_* \simeq \Fun(\Theta^\op, \infty\Grp_*) \xrightarrow{\D^\infty_*}  \Fun(\Theta^\op, \infty\Grp_*) \simeq \Fun(\Theta^\op, \infty\Grp)_*  \to \infty\Cat_*.$$	

\end{definition}

\begin{remark}
Let $\n \geq 0.$ The functor $\D^\n: \infty\Cat \to \infty\Cat$ preserves the final object and so gives rise to a functor
$\D^\n: \infty\Cat_* \to \infty\Cat_*$.
The definition of $\D^\infty: \infty\Cat_* \to \infty\Cat_*$ immediately implies that $\D^\infty: \infty\Cat_* \to \infty\Cat_*$ is the sequential colimit of functors $\D^\infty: \infty\Cat_* \to \infty\Cat_*$:
$$ \D^\infty = \colim(\D^0 \to \D^1 \to \D^2 \to ...). $$

\end{remark}

\begin{remark}

Let $\mJ \subset \infty\Cat$ be a dense full subcategory containing $\Theta.$
By density the restricted Yoneda embedding $$\infty\Cat \to \Fun(\mJ^\op, \infty\Cat)$$
factors as the restricted Yoneda embedding $\infty\Cat \to \Fun(\Theta^\op, \infty\Cat)$ 
followed by right-Kan extension along the embedding $\Theta^\op \subset \mJ^\op.$
Hence the restricted Yoneda embedding $$\infty\Cat \to \Fun(\Theta^\op, \infty\Cat)$$
factors as the restricted Yoneda embedding $\infty\Cat \to \Fun(\mJ^\op, \infty\Cat)$ 
followed by restriction along $\Theta^\op \subset \mJ^\op$.
Thus by adjointness
the localization functor $ \Fun(\mJ^\op, \infty\Cat) \to \infty\Cat$ factors as restriction
along $\Theta^\op \subset \mJ^\op$ followed by the localization functor
$ \Fun(\Theta^\op, \infty\Cat) \to \infty\Cat.$ 
Hence the composition $$  \infty\Cat \subset \Fun(\mJ^\op, \infty\Grp) \xrightarrow{\D^\n_*} \Fun(\mJ^\op, \infty\Grp)  \to \infty\Cat$$
factors as $$  \infty\Cat \subset \Fun(\mJ^\op, \infty\Grp) \xrightarrow{\D^\n_*} \Fun(\mJ^\op, \infty\Grp)  \to\Fun(\Theta^\op, \infty\Grp)  \to \infty\Cat, $$
where the second last functor is restriction, and so identifies with $\D^\n: \infty\Cat \to \infty\Cat.$
In other words the definition of $\D^\n$ does not depend on any choice of dense full subcategory of
$\infty\Cat$ containg $\Theta.$
We could for example replace $\Theta$ by the category $\cube$ of oriented cubes in the definition of $\D^\n.$
\end{remark}

\begin{lemma}\label{leoopi}
	
\begin{enumerate}
\item There is a canonical equivalence $$ \Sym \simeq \coprod_{\n \geq 0} \D^\n$$
of endofunctors of $\infty\Cat$ under $\id \simeq \D^1. $	
\item There is a canonical equivalence $$ \widetilde{\Sym} \simeq \D^\infty $$
of endofunctors of $\infty\Cat_*$ under $\id \simeq \D^1. $	
\end{enumerate}	
\end{lemma}

\begin{proof}

By \cite{rezk2010cartesian} (see also \cite[Theorem 3.2.8.]{gepner2025oriented}) there is a localization $ \Fun(\Theta^\op, \infty\Grp) \rightleftarrows \infty\Cat$ 
whose left adjoint preserves finite products, and so gives rise to a localization
$$ \Fun(\Theta^\op, \C\mon(\infty\Grp)) \simeq \C\mon(\Fun(\Theta^\op, \infty\Grp) ) \rightleftarrows \C\mon(\infty\Cat).$$
Consequently, the functor $\Sym: \infty\Cat \to \infty\Cat$ factors as 
$$  \infty\Cat \subset \Fun(\Theta^\op, \infty\Grp) \xrightarrow{\Sym_*} \Fun(\Theta^\op, \infty\Grp)  \to \infty\Cat$$ and the functor $\widetilde{\Sym}: \infty\Cat \to \infty\Cat_*$ factors as 
$$  \infty\Cat \subset \Fun(\Theta^\op, \infty\Grp) \xrightarrow{\widetilde{\Sym}_*}\Fun(\Theta^\op, \infty\Grp_*) \simeq \Fun(\Theta^\op, \infty\Grp)_*  \to \infty\Cat_* .$$

So (1) follows from \cref{polujk} (1) and that the localization functor
$  \Fun(\Theta^\op, \infty\Grp)  \to \infty\Cat$ preserves small coproducts.
And (2) follows from \cref{polujk} (2) and that the left adjoint functor  $  \Fun(\Theta^\op, \infty\Grp)  \to \infty\Cat$ preserves small filtered colimits.

\end{proof}

\subsection{Connective $H(\bN)$-module spectra are strict symmetric monoidal $\infty$-categories}

In the following we show that connective categorical $H(\bN)$-module spectra are equivalent to strict symmetric monoidal $\infty$-categories
(\cref{theuta}, \cref{theuta2}). We obtain a categorical Dold-Thom theorem (\cref{free}). Moreover we obtain that categorical homology carries a canonical multiplicative structure (\cref{catmonoidal}, \cref{catmonoidali}).

\begin{theorem}\label{theuta}
Let $R$ be an associative algebra in $ \C\mon(\infty\Cat).$
There is a canonical equivalence 
$$ R\mathrm{-}\Mod(\C\mon(\infty\Cat)) \simeq R\mathrm{-}\Mod(\Mon_{\bE_\infty}(\infty\Cat)) $$
of presentable oriented categories over $\Mon_{\bE_\infty}(\infty\Cat)$.

\end{theorem}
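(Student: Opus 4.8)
The plan is to exhibit the equivalence as the functor induced on module categories by the forgetful functor $\nu\colon \C\mon(\infty\Cat)\to\Mon_{\bE_\infty}(\infty\Cat)$ of \cref{restrii}, and to prove it is an equivalence by a monadicity argument over $\Mon_{\bE_\infty}(\infty\Cat)$. First I would record the structural properties of $\nu$ on which the argument rests: by \cref{laxcomp} the functor $\nu$ is lax monoidal and its left adjoint $L$ (the free functor) is monoidal; by \cref{rerul}, together with the fact that $\nu$ is a right adjoint, $\nu$ preserves all small limits and all small colimits; and $\nu$ is conservative, since the conservative forgetful functor $\C\mon(\infty\Cat)\to\infty\Cat$ factors as $\nu$ followed by $\Mon_{\bE_\infty}(\infty\Cat)\to\infty\Cat$. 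Being lax monoidal, $\nu$ carries the algebra $R$ to an algebra $\nu R$ and each left $R$-module to a left $\nu R$-module, hence induces a reduced right Gray-functor $\bar\nu\colon {_R\Mod}(\C\mon(\infty\Cat))\to {_{\nu R}\Mod}(\Mon_{\bE_\infty}(\infty\Cat))$ commuting with the forgetful functors to $\Mon_{\bE_\infty}(\infty\Cat)$; this is precisely the assertion that $\bar\nu$ lies over $\Mon_{\bE_\infty}(\infty\Cat)$. Both module categories are presentable reduced right Gray-categories because they are module categories over algebras in the presentably monoidal reduced Gray-categories of \cref{laxcomp}.

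Next I would set $P\colon {_{\nu R}\Mod}(\Mon_{\bE_\infty}(\infty\Cat))\to\Mon_{\bE_\infty}(\infty\Cat)$ and $Q\colon {_R\Mod}(\C\mon(\infty\Cat))\to\Mon_{\bE_\infty}(\infty\Cat)$ to be the two forgetful functors, so that $P\circ\bar\nu\simeq Q$. Each of $P$ and $Q$ preserves all small colimits (the tensorings $\nu R\ot(-)$ and $R\ot(-)$ are left adjoints, and $\nu$ preserves colimits) and is conservative ($P$ is monadic, and $Q$ is a composite of conservative functors), and each admits a left adjoint obtained by composing a free-module functor with $L$. By the Barr--Beck--Lurie theorem both $P$ and $Q$ are monadic. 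Since $\bar\nu$ is a morphism of adjunctions over $\Mon_{\bE_\infty}(\infty\Cat)$ carrying free modules to free modules, it is an equivalence if and only if the induced comparison morphism between the two monads on $\Mon_{\bE_\infty}(\infty\Cat)$ is an equivalence of monads, equivalently of underlying endofunctors.

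The monad of $P$ is $T'(M)\simeq \nu R\ot_{\Mon_{\bE_\infty}(\infty\Cat)}M$, while the monad of $Q$ is $T(M)\simeq \nu\bigl(R\ot_{\C\mon(\infty\Cat)}L(M)\bigr)$; the comparison map is the projection-formula map $\nu R\ot M\to\nu(R\ot L(M))$ assembled from the lax structure of $\nu$ and the unit $M\to\nu L(M)$. Both $T$ and $T'$ preserve small colimits in $M$, and $\Mon_{\bE_\infty}(\infty\Cat)$ is generated under small colimits by the essential image of $\Free_{\bE_\infty}$; hence it suffices to prove the projection formula when $M=\Free_{\bE_\infty}(X)$, where $L(M)\simeq\Sym(X)$ via the identity $\Sym\simeq L\circ\Free_{\bE_\infty}$. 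In this case both sides can be computed explicitly through the free functors, using that $\Free_{\bE_\infty}$ and $\Sym$ are monoidal, the description of $\Sym$ and $\widetilde{\Sym}$ as coproducts of strict symmetric powers in \cref{leoopi} and \cref{naturalis}, and the preadditivity of both $\C\mon(\infty\Cat)$ and $\Mon_{\bE_\infty}(\infty\Cat)$ (\cref{Preaddd}, \cref{Preaddd2}), which guarantees that the commutative-monoid structure on any $R$-module is determined and hence invisible to the comparison.

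The main obstacle is exactly this last step: the projection formula is not formal for a general lax monoidal right adjoint, and it is here that the passage between strict and coherent commutative structures must be controlled. The feature I would exploit is that $\nu$ preserves both limits and colimits and that, after reducing to free generators, the strict and coherent symmetric powers contribute the same pieces to the free module once the $R$-action is imposed, with preadditivity forcing agreement of the additive structures. Finally, having the equivalence of underlying categories, I would upgrade it to an equivalence of reduced right Gray-categories by observing that both enrichments are induced from the same $\Mon_{\bE_\infty}(\infty\Cat)$-module structure, so that $\bar\nu$, being compatible with the actions and an equivalence on underlying categories, is an equivalence of Gray-categories over $\Mon_{\bE_\infty}(\infty\Cat)$.
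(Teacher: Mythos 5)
Your overall strategy coincides with the paper's: induce the comparison functor from the lax monoidal forgetful functor $\nu\colon \C\mon(\infty\Cat)\to\Mon_{\bE_\infty}(\infty\Cat)$, record that it preserves limits and colimits and is conservative, and reduce the equivalence of module categories to a check on free objects via a Barr--Beck type criterion. But your argument has a genuine gap exactly where you flag "the main obstacle": after reducing to $M=\Free_{\bE_\infty}(X)$ you must prove the projection-formula map $\nu R\ot \Free_{\bE_\infty}(X)\to \nu(R\ot\Sym(X))$ is an equivalence for \emph{arbitrary} $X$, and this is not a computation that follows from monoidality of the free functors, the coproduct decompositions of \cref{leoopi}, or preadditivity. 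The two sides involve the coherent symmetric powers $X^{\boxtimes n}_{h\Sigma_n}$ and the strict symmetric powers $(X^{\times n})_{\Sigma_n}$ respectively, and the assertion that these "contribute the same pieces once the $R$-action is imposed" is essentially a restatement of the theorem rather than a proof of it. You correctly identify that the projection formula is not formal for a lax monoidal right adjoint, but you then do not supply the non-formal input.

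The paper closes this gap by a further reduction that your proposal lacks. It first proves that $\nu$ preserves right tensors with $\infty$-categories with distinguished object (the reduced right Gray-enrichment): this requires showing that $\nu$ preserves suspensions (via the delooping adjunctions of \cref{deoo}) and, crucially, oplax pushouts (\cref{domon}, which itself goes through the infinite delooping into categorical spectra and \cref{symsp}), and then uses that the disks are built from the point by iterated oplax pushouts (\cref{thas2}) so that right-tensor preservation follows from colimit and oplax-pushout preservation. Since $\infty\Cat$ is generated under right tensors by the final $\infty$-category, the free-object check then collapses to $X=\ast$, where both free modules are $R$ itself (the free functors being monoidal send the unit to the unit) and the comparison map is the identity. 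This same right-tensor preservation is also what justifies your final sentence about upgrading the underlying equivalence to one of reduced right Gray-categories over $\Mon_{\bE_\infty}(\infty\Cat)$, which in your write-up is asserted rather than proved. Without an argument for \cref{domon} or a direct proof of the projection formula on general free generators, the proposal does not establish the theorem.
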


\begin{proof}

By \cref{laxcomp} the free functor $$\lambda: \Mon_{\bE_\infty}(\infty\Cat) \to \C\mon(\infty\Cat)$$ is a left adjoint monoidal reduced bioriented functor and so admits a right adjoint reduced bioriented functor $\nu$
that fits into a commutative triangle of presentable reduced bioriented categories and conservative right adjoint reduced bioriented functors:
$$\begin{xy}
\xymatrix{
\C\mon(\infty\Cat) \ar[rd] \ar[rr]^\nu
&& \Mon_{\bE_\infty}(\infty\Cat) \ar[ld]
\\ 
& \infty\Cat_*.
}
\end{xy}$$
The vertical reduced bioriented functors of the triangle are the forgetful bioriented functors, which preserve and so detect small limits and left and right cotensors as right adjoint reduced bioriented functors.
Thus $\nu$ preserves small limits and left and right cotensors.

In particular, $\nu$ preserves endomorphisms. 
Moreover $\nu$ preserves small colimits by \cref{rerul}.
We prove next that $\nu$ preserves suspensions.
By \cref{deoo} there is an adjunction
$$B:  \Mon_{\bE_\infty}(\infty\Cat) \to  \Mon_{\bE_\infty}(\infty\Cat): \Omega,$$
where the left adjoint preserves finite products and by adjointness identifies with the suspensions
of $\Mon_{\bE_\infty}(\infty\Cat).$
Hence this adjunction induces an adjunction
$$ B: \C\mon(\infty\Cat) \simeq \C\mon(\Mon_{\bE_\infty}(\infty\Cat)) \to \C\mon(\Mon_{\bE_\infty}(\infty\Cat)) \simeq \C\mon(\infty\Cat): \Omega. $$
The right adjoint of the latter adjunction is induced by $\Omega$ and is the endomorphisms of $\C\mon(\infty\Cat)$ so that the left adjoint of the latter adjunction is the suspensions of $\C\mon(\infty\Cat)$.
Consequently, the forgetful functor $$\nu: \C\mon(\infty\Cat) \to \Mon_{\bE_\infty}(\infty\Cat)$$
preserves suspensions.

\cref{domon} implies that $\nu$ preserves oriented pushouts.

Since $\nu$ preserves small colimits and oriented pushouts, $\nu$ preserves right tensors because
$\infty\Cat$ is generated under small colimits by the disks and for every $\n \geq 0$ 
the $\n+1$-disk $\bD^{\n+1}$ is the oriented pushout of $\bD^\n \to *$ with itself by \cref{thas2}.

Hence $\nu$ induces a reduced oriented functor $$ \bar{\nu}: R\mathrm{-}\Mod(\C\mon(\infty\Cat)) \to R\mathrm{-}\Mod(\Mon_{\bE_\infty}(\infty\Cat)), $$
which lifts $\nu$ and so also preserves small colimits and right tensors.
To see that $\bar{\nu}$ is an equivalence, by \cite[Corollary 4.7.3.16.]{lurie.higheralgebra} it remains to see that $\bar{\nu}$ preserves the free object on any small $\infty$-category. Since $\bar{\nu}$ and the free reduced bioriented functors preserve right tensors and $\infty\Cat$ is generated under right tensors by the final $\infty$-category, it suffices to show that $\bar{\nu}$ preserves the free objects on the final $\infty$-category. 
Since the free functors $$ \infty\Cat \to \Mon_{\bE_\infty}(\infty\Cat),  \infty\Cat \to \C\mon(\infty\Cat)$$ are monoidal, the free objects on the final $\infty$-category, the tensor unit of $\infty\Cat,$ both identify with $R$ and the $R$-linear comparison map is the identity. 

\end{proof}

\begin{corollary}\label{theuta2}
There is a canonical equivalence of presentable reduced oriented categories

$$ \C\mon(\infty\Cat) \simeq \bN\mathrm{-}\Mod(\Mon_{\bE_\infty}(\infty\Cat)) \simeq H(\bN)\mathrm{-}\Mod(\Cat\Sp_{\geq 0})$$
over $\Mon_{\bE_\infty}(\infty\Cat) \simeq \Cat\Sp_{\geq 0}$.
\end{corollary}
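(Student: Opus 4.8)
The plan is to deduce the corollary from \cref{theuta} by specializing to $R = \bN$ and identifying $\bN$ as the tensor unit of $\C\mon(\infty\Cat)$. Everything should then follow formally, once the tensor unit is identified and the general ``modules over the unit'' principle is invoked in the reduced right Gray-enriched setting.

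First I would record that the Gray-induced monoidal structure on $\C\mon(\infty\Cat)$ of \cref{laxcomp} has tensor unit $\bN$. Indeed, by \cref{laxcomp} the free functor $\Sym \colon \infty\Cat \to \C\mon(\infty\Cat)$ is a left adjoint monoidal reduced Gray-functor, hence preserves tensor units; since the tensor unit of $(\infty\Cat, \boxtimes)$ is the final $\infty$-category $\ast$, the tensor unit of $\C\mon(\infty\Cat)$ is $\Sym(\ast)$, whose underlying object is $\bN$ by \cref{naturalis}. Thus $\bN$, regarded as the free strict commutative monoid on a point, is the tensor unit of $\C\mon(\infty\Cat)$, and in particular carries a canonical commutative, hence associative, algebra structure, so that \cref{theuta} applies with $R = \bN$.

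Next I would invoke the general principle that modules over the tensor unit recover the ambient monoidal category: the forgetful reduced right Gray-functor ${_\bN\Mod}(\C\mon(\infty\Cat)) \to \C\mon(\infty\Cat)$ is an equivalence because $\bN$ is the tensor unit. This should hold at the level of presentable reduced right Gray-categories, not merely underlying categories, since the identification of unit-modules with the ambient category is a formal consequence of unitality of the action and is preserved by the relevant enriched module theory. Composing this equivalence with the forgetful reduced Gray-functor $\nu \colon \C\mon(\infty\Cat) \to \Mon_{\bE_\infty}(\infty\Cat)$ realizes ${_\bN\Mod}(\C\mon(\infty\Cat)) \simeq \C\mon(\infty\Cat)$ as an equivalence over $\Mon_{\bE_\infty}(\infty\Cat)$, matching the structure map appearing in \cref{theuta}.

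Finally I would apply \cref{theuta} with $R = \bN$, obtaining an equivalence ${_\bN\Mod}(\C\mon(\infty\Cat)) \simeq {_\bN\Mod}(\Mon_{\bE_\infty}(\infty\Cat))$ of presentable reduced right Gray-categories over $\Mon_{\bE_\infty}(\infty\Cat)$, where $\bN$ on the right is its image $\nu(\bN)$, i.e. the $\bE_\infty$-monoid $\bN$ with its rig structure. Splicing this with the previous equivalence yields $\C\mon(\infty\Cat) \simeq {_\bN\Mod}(\Mon_{\bE_\infty}(\infty\Cat))$ over $\Mon_{\bE_\infty}(\infty\Cat)$, as desired. The only point requiring genuine care, and the step I expect to be the main obstacle, is checking that the identification of $\bN$-modules with the ambient category is compatible with the full reduced right Gray-enrichment and with the structure over $\Mon_{\bE_\infty}(\infty\Cat)$, rather than only with the underlying $\infty$-categories; the rest is a direct specialization of \cref{theuta}.
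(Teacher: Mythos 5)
Your proposal is correct and follows essentially the same route as the paper's own proof: identify $\bN = \Sym(\ast)$ as the tensor unit of $\C\mon(\infty\Cat)$ via the monoidality of the free functor from \cref{laxcomp} and \cref{naturalis}, conclude that the forgetful reduced right Gray-functor ${_\bN\Mod}(\C\mon(\infty\Cat)) \to \C\mon(\infty\Cat)$ is an equivalence, and splice with \cref{theuta} at $R = \bN$. The enrichment compatibility you flag as the main potential obstacle is likewise taken as formal in the paper, which simply asserts the unit-module equivalence at the level of reduced right Gray-categories.
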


\begin{proof}
By \cref{laxcomp} the free functor $\infty\Cat \to \C\mon(\infty\Cat)$ is monoidal.
Hence the tensor unit of $\C\mon(\infty\Cat)$ is free on the final $\infty$-category and so by \cref{naturalis} is $\bN $. Hence the forgetful reduced oriented functor
$\bN\mathrm{-}\Mod(\C\mon(\infty\Cat)) \to  \C\mon(\infty\Cat) $ is an equivalence.

By \cref{symsp} there is a canonical equivalence of presentably bioriented categories
$$ \Mon_{\bE_\infty}(\infty\Cat) \simeq \Cat\Sp_{\geq 0}, $$
which yields an equivalence of presentably bioriented categories
$$ \bN\mathrm{-}\Mod(\Mon_{\bE_\infty}(\infty\Cat)) \simeq H(\bN)\mathrm{-}\Mod(\Cat\Sp_{\geq 0}). $$
	
\end{proof}

\begin{corollary}\label{theutasp0}
Let $\mC$ be a preadditive presentable bioriented category and $R$ an associative algebra in $ \C\mon(\mC).$
There is a canonical equivalence of presentable reduced oriented categories over $\mC:$
$$ R\mathrm{-}\Mod(\C\mon(\mC)) \simeq R\mathrm{-}\Mod(\mC).$$
	
\end{corollary}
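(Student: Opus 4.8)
The plan is to reduce the statement to the single fact that, for a preadditive (presentably monoidal) reduced right Gray-category $\mC$, the forgetful functor $\C\mon(\mC)\to\mC$ is an equivalence, and then transport algebras and modules along it. First I would record that the free strict commutative monoid functor $\Sym\colon\mC\to\C\mon(\mC)$ of \cref{heuty} is, by \cref{laxcomp}, a left adjoint monoidal reduced right Gray-functor whose right adjoint is the forgetful reduced right Gray-functor $U\colon\C\mon(\mC)\to\mC$ given by evaluation at $\bN$.

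The key step is to show that $U$ is an equivalence. This is the preadditive analogue of \cref{ahoo}: in a preadditive category every object carries an essentially unique commutative monoid structure and every morphism is automatically a morphism of monoids, so the space of commutative monoid structures lying over a fixed object is contractible. Concretely, since $(\C\mon^{\mathrm{free},\fin})^\op$ is the Lawvere theory of commutative monoids and $\C\mon^{\mathrm{free},\fin}$ is itself preadditive (\cref{rempuy}), evaluation at the generator $\bN$ identifies the product-preserving functors $\C\mon^{\mathrm{free},\fin}\to\mC$ with $\mC$ whenever $\mC$ is preadditive; this is the strict counterpart of the Gepner--Groth--Nikolaus characterisation of preadditivity invoked in \cref{ahoo}. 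Alternatively one may combine the equivalence $\Mon_{\bE_\infty}(\mC)\simeq\mC$ for preadditive $\mC$ with the fact that, over a preadditive base, the comparison $\nu\colon\C\mon(\mC)\to\Mon_{\bE_\infty}(\mC)$ of \cref{restrii} becomes an equivalence. Granting that $U$ is an equivalence of underlying $\infty$-categories, the unit and counit of the reduced right Gray-adjunction $\Sym\dashv U$ are maps of reduced right Gray-functors which are objectwise equivalences, hence equivalences of Gray-functors; thus $\Sym$ and $U$ are mutually inverse equivalences of reduced right Gray-categories. Because $\Sym$ is monoidal, this is a monoidal equivalence of presentably monoidal reduced right Gray-categories.

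Finally I would transport along the monoidal equivalence $\Sym\colon\mC\xrightarrow{\sim}\C\mon(\mC)$. It carries the associative algebra $U(R)$ in $\mC$ to $R$ in $\C\mon(\mC)$, and a monoidal equivalence induces an equivalence on $\bE_1$-algebras and on their module Gray-categories, so that $ {_R\Mod(\C\mon(\mC))}\simeq{_{U(R)}\Mod(\mC)}$ compatibly with the forgetful functors to $\mC$; identifying $U(R)$ with $R$ under the equivalence $U$ yields the asserted equivalence of presentable reduced right Gray-categories over $\mC$. The main obstacle is precisely the middle step, namely establishing that $\C\mon(\mC)\to\mC$ is an equivalence for preadditive $\mC$: one must handle the strict, Lawvere-theoretic nature of $\C\mon$ rather than the $\bE_\infty$-version already treated by Gepner--Groth--Nikolaus, either by the direct uniqueness-of-monoid-structure argument or by checking that $\nu$ is an equivalence over a preadditive base.
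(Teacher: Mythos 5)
There is a genuine gap, and it sits exactly where you flag "the main obstacle": the forgetful functor $U\colon\C\mon(\mC)\to\mC$ is \emph{not} an equivalence for preadditive $\mC$, and neither of your two proposed routes to it can work. Preadditivity gives the collapse $\Mon_{\bE_\infty}(\mC)\simeq\mC$ (uniqueness of $\bE_\infty$-monoid structures), but a strict commutative monoid structure is a product-preserving functor out of the Lawvere theory $(\C\mon^{\mathrm{free},\fin})^\op$, which encodes strictly more data than an $\bE_\infty$-structure; the comparison $\nu\colon\C\mon(\mC)\to\Mon_{\bE_\infty}(\mC)$ of \cref{restrii} is an equivalence only when $\mC$ is a $(1,1)$-category. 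The paper itself supplies the counterexample: by \cref{theutasp}, $\C\mon(\Sp)\simeq{_{H(\bN)}\Mod}(\Sp)$, which is not equivalent to $\Sp\simeq\Mon_{\bE_\infty}(\Sp)$ even though $\Sp$ is preadditive (indeed stable). If your middle step were true, the categorified Dold--Thom theorem would collapse and the corollary would be content-free; the whole point of the statement is that the discrepancy between $\C\mon(\mC)$ and $\mC$ is absorbed precisely by passing to modules over the algebra $R$, in the same way that modules over an $H\bZ$-algebra in $H\bZ$-modules agree with modules over it in spectra.

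The paper's actual argument does not attempt to identify $\C\mon(\mC)$ with $\mC$. It first proves the case $\mC=\infty\Cat$ as \cref{theuta}, namely ${_R\Mod}(\C\mon(\infty\Cat))\simeq{_R\Mod}(\Mon_{\bE_\infty}(\infty\Cat))$, by checking via a Barr--Beck-type criterion that the forgetful functor preserves the relevant colimits, tensors and free objects. It then deduces the general preadditive case by base change: using $\C\mon(\mD)\simeq\C\mon(\mS)\otimes\mD$ and $\Mon_{\bE_\infty}(\mD)\simeq\Mon_{\bE_\infty}(\mS)\otimes\mD$ together with preadditivity of $\C\mon(\infty\Cat)$, one identifies $\C\mon(\mC)\simeq\C\mon(\infty\Cat)\otimes_{\Mon_{\bE_\infty}(\infty\Cat)}\mC$ and tensors the equivalence of \cref{theuta} with $\mC$ over $\Mon_{\bE_\infty}(\infty\Cat)$, invoking the compatibility of module categories with relative tensor products. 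To repair your proof you would need to replace the false equivalence $\C\mon(\mC)\simeq\mC$ by this relative identification of $\C\mon(\mC)$ as a base change of $\C\mon(\infty\Cat)$, which is essentially the paper's route.
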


\begin{proof}
Since $\mC$ is a preadditive presentable bioriented category, the free reduced bioriented functor
$\mC \to \Mon_{\bE_\infty}(\mC)$ is an equivalence and $\mC$ is canonically a category presentably left tensored over $\Mon_{\bE_\infty}(\infty\Cat).$

The canonical equivalence of \cref{theuta}  of presentable reduced oriented categories
$$ R\mathrm{-}\Mod(\C\mon(\infty\Cat)) \simeq R\mathrm{-}\Mod(\Mon_{\bE_\infty}(\infty\Cat))$$
over $\Mon_{\bE_\infty}(\infty\Cat)$ gives rise to an equivalence of presentable reduced oriented categories
$$R\mathrm{-}\Mod(\C\mon(\infty\Cat)) \ot_{\Mon_{\bE_\infty}(\infty\Cat)} \mC \simeq R\mathrm{-}\Mod(\Mon_{\bE_\infty}(\infty\Cat))  \ot_{\Mon_{\bE_\infty}(\infty\Cat)} \mC.$$

By  \cite[Proposition B.3.]{gepner2016universality} for every presentable reduced bioriented category $\mD$ there are equivalences
$$ \C\mon(\mD) \simeq \C\mon(\infty\Grp) \ot \mD,  \  \Mon_{\bE_\infty}(\mD) \simeq \Mon_{\bE_\infty}(\infty\Grp) \ot \mD.$$

\cref{Preaddd} implies that $ \C\mon(\infty\Cat)$ is preadditive so that there is a canonical equivalence of presentable reduced bioriented categories
$$  \C\mon(\infty\Cat) \simeq \Mon_{\bE_\infty}(\C\mon(\infty\Cat)) \simeq \C\mon(\Mon_{\bE_\infty}
(\infty\Cat)) \simeq \C\mon(\infty\Grp) \ot \Mon_{\bE_\infty} (\infty\Cat).$$ 
The latter gives rise to an equivalence of presentable reduced bioriented categories
$$ \C\mon(\infty\Cat) \ot_{\Mon_{\bE_\infty}(\infty\Cat)} \mC  \simeq  \C\mon(\infty\Grp) \ot \Mon_{\bE_\infty} (\infty\Cat) \ot_{\Mon_{\bE_\infty}(\infty\Cat)} \mC \simeq $$$$ \C\mon(\infty\Grp) \ot \mC \simeq \C\mon(\mC).$$
We obtain a canonical equivalence of presentable reduced oriented categories
$$ R\mathrm{-}\Mod(\C\mon(\mC)) \simeq  R\mathrm{-}\Mod(\C\mon(\infty\Cat)) \ot_{\C\mon(\infty\Cat)}\C\mon(\mC) \simeq $$
$$R\mathrm{-}\Mod(\C\mon(\infty\Cat)) \ot_{\C\mon(\infty\Cat)} \C\mon(\infty\Cat) \ot_{\Mon_{\bE_\infty}(\infty\Cat)} \mC  \simeq $$
$$ R\mathrm{-}\Mod(\C\mon(\infty\Cat)) \ot_{\Mon_{\bE_\infty}(\infty\Cat)} \mC \simeq$$$$ R\mathrm{-}\Mod(\Mon_{\bE_\infty}(\infty\Cat))  \ot_{\Mon_{\bE_\infty}(\infty\Cat)} \mC \simeq R\mathrm{-}\Mod(\mC), $$

where the first and last equivalence is by \cite[Theorem 4.8.4.6.]{lurie.higheralgebra}.

\end{proof}

\begin{corollary}\label{theutasp}
	
There is a canonical equivalence of presentable reduced oriented categories over $\Cat\Sp:$
$$ \C\mon(\Cat\Sp) \simeq H(\bN)\mathrm{-}\Mod(\Cat\Sp). $$
	
\end{corollary}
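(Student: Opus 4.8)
The plan is to deduce \cref{theutasp} as a direct specialization of \cref{theutasp0} by taking $\mC = \Sp$ and $R = \bN$. The only thing that must be checked is that the hypotheses of \cref{theutasp0} hold for $\mC = \Sp$ and that the free strict commutative monoid on the tensor unit is $H(\bN)$, so that $_{\bN}\Mod$ specializes correctly.

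First I would verify the hypotheses. By \cref{stab} the Gray-category $\Sp$ is stable, hence in particular oplax quasi-stable, and by \cref{preadd} every oplax quasi-stable right Gray-category is preadditive; thus $\Sp$ is a preadditive presentable Gray-category, which is exactly the hypothesis on $\mC$ in \cref{theutasp0}. Since $\Sp$ is preadditive, the free reduced Gray-functor $\Sp \to \Mon_{\bE_\infty}(\Sp)$ is an equivalence by \cref{hhoo}, so that $\C\mon(\Sp) \simeq \C\mon(\Mon_{\bE_\infty}(\Sp))$ makes sense and $\bN$ (the tensor unit of $\C\mon(\infty\Cat)$, which is free on the final $\infty$-category by \cref{naturalis}) acts. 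Applying \cref{theutasp0} with $R = \bN$ then yields a canonical equivalence of presentable reduced right Gray-categories over $\Sp$
$$ {_{\bN}\Mod}(\C\mon(\Sp)) \simeq {_{\bN}\Mod}(\Sp). $$

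Next I would simplify both sides. On the left, since the free functor $\infty\Cat \to \C\mon(\infty\Cat)$ is monoidal (\cref{laxcomp}) its unit $\bN$ is the monoidal unit, so the forgetful functor $_{\bN}\Mod(\C\mon(\Sp)) \to \C\mon(\Sp)$ is an equivalence, exactly as in \cref{theuta2}. This identifies the left-hand side with $\C\mon(\Sp)$. On the right, the module $_{\bN}\Mod(\Sp)$ is taken over $\Mon_{\bE_\infty}(\infty\Cat)$ acting on $\Sp$; under the identification $\Sp \simeq \Sp(\infty\Cat_*)$ and the monoidal structure of \cref{stten}, the image of $\bN \in \C\mon(\infty\Cat)$ under $\Mon_{\bE_\infty}(\Sigma^\infty)$ is precisely the Eilenberg--MacLane categorical spectrum $H(\bN)$ by \cref{symsp}, since $\Mon_{\bE_\infty}(\Sigma^\infty)$ sends a symmetric monoidal $\infty$-category to its infinite delooping $B^\infty$. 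Thus the $\bN$-action on $\Sp$ is the $H(\bN)$-module action, identifying the right-hand side with $_{H(\bN)}\Mod(\Sp)$, and combining gives $\C\mon(\Sp) \simeq {_{H(\bN)}\Mod}(\Sp)$.

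The main obstacle I anticipate is the bookkeeping in the last step: one must confirm that the ring acting in \cref{theutasp0} — namely $R = \bN$ viewed as an associative algebra in $\C\mon(\infty\Cat)$ and transported to act on $\Sp$ through the monoidal free functor $\Mon_{\bE_\infty}(\Sigma^\infty)$ — is identified with $H(\bN)$ under the equivalence $\Mon_{\bE_\infty}(\infty\Cat) \simeq \Sp_{\geq 0} \subset \Sp$ of \cref{symsp}. Everything else is formal invocation of the cited results; this compatibility of the algebra structure with the delooping $B^\infty$ and the definition of the Eilenberg--MacLane spectrum functor $H$ is the one point requiring genuine care.
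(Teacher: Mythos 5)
Your proposal is correct and takes essentially the same route as the paper, which derives \cref{theutasp} in one line from \cref{preadd} and \cref{stab} (i.e.\ $\Sp$ is stable, hence preadditive presentable, so \cref{theutasp0} applies with $R=\bN$, and the unit $\bN$ is absorbed on the left while its image under $\Mon_{\bE_\infty}(\Sigma^\infty)$ is $H(\bN)$ on the right). Your spelled-out verification of the identification of the acting algebra with $H(\bN)$ via \cref{symsp} is exactly the bookkeeping the paper leaves implicit.
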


\begin{proof}

The monoidal functor $B^\infty: \infty\Cat  \to \Cat\Sp$ induces a monoidal functor $$ \C\mon(\infty\Cat) \to \C\mon(\Cat\Sp). $$
By \cref{naturalis} the tensor unit of $ \C\mon(\infty\Cat)$ is $\bN.$
Hence the tensor unit of $\C\mon(\Cat\Sp)$ is $H(\bN)$.
Therefore the forgetful oriented functor $$ H(\bN)\mathrm{-}\Mod(\C\mon(\Cat\Sp)) \to \C\mon(\Cat\Sp)$$ is an equivalence.
By \cref{stab} the presentable bioriented category $\Cat\Sp$ is stable.
\cref{hhoo} implies that the forgetful bioriented functor $\C\mon(\Cat\Sp) \to \Cat\Sp$
is an equivalence.

\end{proof}

\begin{remark}
	
There is a commutative square:	
$$\begin{xy}
\xymatrix{
\C\mon(\infty\Cat) \ar[d] \ar[rr]^\simeq
&& \bN\mathrm{-}\Mod(\Mon_{\bE_\infty}(\infty\Cat))  \ar[d]^{B^\infty}
\\ 
\C\mon(\Cat\Sp) \ar[rr]^\simeq  && H(\bN)\mathrm{-}\Mod(\Cat\Sp),
}
\end{xy}$$	
where the left vertical functor is the left adjoint of the functor induced by $\Omega^\infty:\Cat\Sp \to \infty\Cat$ and the right vertical functor is induced by the monoidal embedding $$B^\infty: \Mon_{\bE_\infty}(\infty\Cat) \to \Cat\Sp$$ of \cref{symsp} left adjoint to $\Omega^\infty:\Cat\Sp \to \Mon_{\bE_\infty}(\infty\Cat)$.
	
\end{remark}

\cref{leoopi}, \cref{theuta} give the following categorical Dold-Thom theorem:






\begin{corollary}\label{redfree}\label{free}

\begin{enumerate}
\item There is a canonical equivalence $$ \Omega^\infty(H(\bN) \wedge (-))  \simeq \D^\infty.$$ 	

\item There is a canonical equivalence $$ \Omega^\infty(H(\bN) \wedge (-)_+)  \simeq \coprod_{\n \geq 0} \D^\n.$$ 

\end{enumerate}

\end{corollary}

\cref{redfree}, \cref{leoopi} and \cref{monoidalos} imply the following:
\begin{corollary}\label{catmonoidal}

The category $ H(\bN)\mathrm{-}\Mod(\Cat\Sp)$ carries a presentably monoidal structure. 

Categorical homology $$ H(\bN) \wedge (-): \infty\Cat_* \to H(\bN)\mathrm{-}\Mod(\Cat\Sp) $$ is monoidal.

\end{corollary}

\begin{corollary}\label{catmonoidali}

Categorical homology $$\Omega^\infty(H(\bN) \wedge (-)): \infty\Cat_* \to \infty\Cat_* $$ is lax monoidal.

\end{corollary}

\vspace{2mm}

\subsection{Stratified simplicial commutative monoids}

In the following we introduce stratified simplicial commutative monoids, which will serve as a model for strict symmetric monoidal $\infty$-categories. 

We will prove that stratified simplicial commutative monoids form a combinatorial model category (\cref{Strattrans}).

We first introduce stratified simplicial sets. We recommend \cite{verity2008complicial}, \cite{verity2008weak}, \cite{riehl2018complicial}, \cite{Verity2023}, \cite{ozornova2020model} for an account on such.
The next definition is \cite[Definition 5]{verity2008weak}:

\begin{definition}
	
A stratified simplicial set is a simplicial set $X$ together with for every $\n \geq 1$ a subset $\mE_\n \subset X_\n$ of distinguished $\n$-simplices that contains all degenerate $\n$-simplices and is called the stratification or the set of thin simplices.
A map of stratified simplicial sets is a map of simplicial sets that preserves the distinguished simplices.

\end{definition}

\begin{definition}
Let $\Strat$ be the category of stratified simplicial sets.	
	
\end{definition}

\begin{example}\label{exammp}
	
The forgetful functor $\Strat \to \sSet$ that forgets the stratification admits a left and right adjoint.
The left adjoint $(-)^\flat: \sSet \to \Strat$ equips a simplicial set with the minimal stratification,
the positive dimensional degenerate simplices.
The  right adjoint $(-)^{\#}: \sSet \to \Strat$ equips a simplicial set with the maximal stratification,
all positive dimensional simplices.
The functor $(-)^{\#}: \sSet \to \Strat$ admits itself a right adjoint that assigns the simplicial subset spanned by all thin simplices.
	
\end{example}

\begin{notation}
For any $\n \geq 1$ let $(\Delta^\n)^\rt$ the standard $\n$-simplex equipped with all degenerate 
positive dimensional simplices and the unique non-degenerate $\n$-simplex.
By convention we set $(\Delta^0)^\rt := \Delta^0.$

\end{notation}

\begin{notation}
Let $\rt \Delta^\op \subset \Strat$ be the full subcategory spanned by the stratified simplicial sets
$ (\Delta^\n)^\flat, (\Delta^\n)^{t}$ for $\n \geq 0.$ Let $ \Set^{\rt\Delta^\op}:= \Fun(\rt\Delta^\op, \Set).$

\end{notation}

By the next remark $\Strat$ is a canonical localization of a convenient presheaf category.

\begin{remark}\label{Adjfinprod}
The embedding $(-)^{\flat}: \sSet \to \Strat$ restricts to an embedding $\Delta \to t\Delta.$ 
The restricted Yoneda embedding $\Strat \to \Set^{\rt\Delta^\op}$ 
is fully faithful \cite[Observation 11]{verity2008weak} and admits a finite products preserving left adjoint sending $Y$ to $(Y_{|\Delta^\op}, \mathrm{im}(Y((\Delta^\n)^\rt) \to Y((\Delta^\n)^\flat))_{\n \geq 0})$ (\cite{Verity2023} after Remark 1.10.).

\end{remark}

\begin{definition}Let $n \geq 0.$
By induction on $n$ we define the $n$-th oriented simplex
$\bDelta^n$ by $\bDelta^0:= \bD^0$ and the oriented cofiber $$\bDelta^n:= \bD^0 \overset{\to}{+}_{\bDelta^{n-1}} \bDelta^{n-1}$$
of the identity.
    
\end{definition}

\begin{remark}

The oriented simplex was studied by \cite{street1987algebra}.
The $n$-th oriented simplex $\bDelta^n$ has a unique non-degenerate $\n$-morphism and the 1-truncation is the totally ordered set $[\n]=\{0<...<\n\}$.

By \cite[Notation 5.2.12.]{gepner2026homotopy} there is a canonical functor $\bDelta^\bullet: \Delta \to \infty\Cat$
whose composition with the functor $\tau_1: \infty\Cat \to \infty\Cat$ is the canonical embedding.

\end{remark}

	


\begin{definition}
	
Let $\mC$ be an $\infty$-category.
The Street nerve $\N(\mC)$ of $\mC$ is the stratified simplicial space whose underlying simplicial space is $ \Map_{\infty\Cat}(\bDelta^{\bullet},\mC)$ and whose thin 
$\n$-simplices for $\n \geq 1$ are precisely 
the functors $\bDelta^\n \to \mC$ that send the unique non-degenerate $\n$-morphism of $\bDelta^\n$ to a degenerate $\n$-morphism.

\end{definition}


By \cite[Theorem 1.25.]{Verity2023} there is a cartesian combinatorial model structure on $\Strat$ whose cofibrations are the monomorphisms in $\Strat$, the maps that are objectwise injective. The fibrant objects 
are called complicial sets \cite[Definition 1.21.]{Verity2023}. These are higher-categorical analogues of (weak) Kan examples.

By \cite[Theorem 3.3.2.5.]{loubaton2022complicial} complicial sets are a model for $\infty$-categories: there is a canonical equivalence
\begin{equation}\label{equika}\Strat[\mathrm{weak} \ \mathrm{equivalences}] \simeq \infty\Cat.\end{equation}

\begin{definition}

A stratified simplicial commutative monoid is a pair $(X,\mE)$,
where $X$ is a simplicial commutative monoid and $\mE$ is a set of simplices of $X$ containing all degenerate simplices and for every $\n \geq 0$ the monoid structure on $X_\n$ restricts to $X_\n \cap \mE.$
A map of stratified simplicial commutative monoids $(X,\mE) \to (Y,\mF)$ is a map $X \to Y$ of simplicial commutative monoids sending $\mE $ to $\F$.

\end{definition}

\begin{remark}\emph{}

\begin{itemize}

\item A stratified simplicial commutative monoid is a pair $(X,\mE)$,
where $X$ is a simplicial commutative monoid such that $(X,\mE)$ is a stratified simplicial set and
for every $\n \geq 0$ the monoid structure on $X_\n$ preserves thinness.

\item A map of stratified simplicial commutative monoids $(X,\mE) \to (Y,\mF)$ is a map $X \to Y$ of simplicial commutative monoids that is also a map of stratified simplicial sets $(X,\mE) \to (Y,\mF).$

\end{itemize}

\end{remark}

\begin{remark}

A stratified simplicial commutative monoid is precisely a commutative monoid in $\Strat.$
A map of stratified simplicial commutative monoids is precisely a map of commutative monoids in $\Strat.$

\end{remark}

\begin{notation}
We write $\C\mon(\Strat)$ for the category of commutative monoid objects in $\Strat$, the category of stratified commutative monoids.	

\end{notation}

\begin{example}
The forgetful functor $\C\mon(\Strat) \to \Strat$ admits a left adjoint, the free stratified simplicial commutative monoid functor, that sends a stratified simplicial set $(X,\mE)$
to the stratified simplicial commutative monoid $(\N[X],\bN[\mE])$,
where $ \bN[X]$ is the free simplicial commutative monoid on $X$ satisfying 
$  \bN[X]_\n = \bN[X_\n] $ for every $\n \geq 0$ and $\N[\mE]$ is the graded set satisfying 
$  \bN[\mE]_\n = \bN[\mE_\n] $.

\end{example}

In the following we will prove the following theorem:

\begin{theorem}\label{Strattrans}

There is a right induced (and so combinatorial) model structure on the category $\C\mon(\mathrm{Strat})$ of stratified commutative monoids.	

\end{theorem}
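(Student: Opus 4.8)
The plan is to construct the model structure on $\C\mon(\Strat)$ by right (co)induction along the free-forgetful adjunction
\[
\Free: \Strat \rightleftarrows \C\mon(\Strat) : U,
\]
where $\Free$ sends $(X,\mE)$ to $(\bN[X],\bN[\mE])$. The source category $\Strat$ carries the cartesian combinatorial model structure of \cite[Theorem 1.25.]{Complicial}, and the general strategy is to declare a map $f$ in $\C\mon(\Strat)$ to be a weak equivalence (respectively fibration) exactly when $U(f)$ is a weak equivalence (respectively fibration) in $\Strat$. To produce such a transferred model structure one invokes the standard transfer principle (Crans' transfer theorem, or the version in \cite[Theorem 11.3.2]{Complicial}-style references): it suffices to check that $\Strat$ is cofibrantly generated, that $U$ preserves filtered colimits and the small object argument applies, and then to verify the one nontrivial hypothesis, namely that relative $\Free(J)$-cell complexes (the transferred trivial cofibrations) are weak equivalences, where $J$ is a set of generating trivial cofibrations of $\Strat$.

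First I would establish the formal inputs. Since $\Strat$ is a localization of the presheaf category $\Set^{t\Delta^\op}$ with a finite-products-preserving left adjoint by \cref{Adjfinprod}, it is a locally presentable, hence cofibrantly generated, combinatorial model category; its generating (trivial) cofibrations are known explicitly. The forgetful functor $U : \C\mon(\Strat) \to \Strat$ is monadic and preserves filtered colimits because the free commutative monoid functor is computed degreewise as $\bN[-]$, which commutes with filtered colimits of sets; this guarantees that the small object argument can be run in $\C\mon(\Strat)$ and that the transferred classes have the required accessibility. With these in hand, the definitions of weak equivalence, fibration, and cofibration (the last as the left class determined by lifting against trivial fibrations) formally satisfy the two-out-of-three and retract axioms, and one lifting axiom holds by adjunction.

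The main obstacle, as usual with transfer, is the \emph{acyclicity} condition: I must show that every pushout in $\C\mon(\Strat)$ of a free generating trivial cofibration $\Free(j)$, with $j \in J$, has underlying map in $\Strat$ a trivial cofibration, and that this persists under transfinite composition. The difficulty is that pushouts of algebras do not commute with the forgetful functor, so computing $U$ of such a pushout requires analysing the free commutative-monoid monad on a pushout along $\bN[j]$. The plan here is to use that $\bN[-]$ sends a monomorphism $X \hookrightarrow Y$ of stratified simplicial sets to a map whose cofiber is filtered by the symmetric powers $\D^n$, exactly the strict symmetric power filtration appearing in \cref{symolu} and \cref{polujk}; concretely the underlying object of a pushout $A \coprod_{\Free(X)} \Free(Y)$ receives a filtration whose associated graded is built from $\Sigma_n$-quotients of products $Y^{\times n}$. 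One then checks degreewise that if $j$ is an anodyne extension in $\Complicial$'s sense (a trivial cofibration), each symmetric power $(Y/X)^{\times n}_{\Sigma_n}$-layer is again a trivial cofibration, using that the model structure on $\Strat$ is cartesian (so trivial cofibrations are stable under products and pushout-products with cofibrations) together with the fact that quotients by finite group actions preserve the relevant weak equivalences when the stratification is handled via the thin-simplex set $\bN[\mE]$.

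Finally I would assemble these pieces: acyclicity of the transferred trivial cofibrations yields the remaining lifting axiom and factorization axioms via the small object argument, completing the verification that the right-induced structure is a model structure; combinatoriality is inherited because $\C\mon(\Strat)$ is locally presentable and the generating sets are $\Free(I)$ and $\Free(J)$. The key technical lemma to isolate and prove separately is the statement that $\Free$ sends trivial cofibrations of $\Strat$ to maps whose pushouts have acyclic underlying maps; I expect this symmetric-power filtration argument, and in particular the compatibility of the $\Sigma_n$-quotient with the cartesian model structure and with the free stratification $\bN[\mE]$, to be the crux of the proof.
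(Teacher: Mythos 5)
Your overall architecture is the standard transfer argument, and it matches in spirit what the paper does: the paper simply outsources the transfer machinery and the symmetric-power filtration of pushouts of free maps to \cite[Theorem 5.2.]{White}, so the entire mathematical content of the paper's proof is the single input that White's theorem requires, namely \cref{Symsoi}: each strict symmetric power functor $X \mapsto (X^{\times n})_{\Sigma_n}$ on $\Strat$ preserves \emph{all} weak equivalences. You correctly identify this as the crux, but you do not prove it; you only assert that ``quotients by finite group actions preserve the relevant weak equivalences when the stratification is handled via the thin-simplex set.'' That assertion is exactly the gap. Strict colimits over $B\Sigma_n$ do not preserve weak equivalences in a general model category, and nothing about the stratification by itself rescues this; some genuine homotopical input is needed to identify the strict orbit object $(X^{\times n})_{\Sigma_n}$ with a homotopy colimit.

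The paper closes this gap (\cref{Symso}, adapting an argument of Casacuberta et al.) as follows: one replaces the $B\Sigma_n$-indexed diagram by the diagram $\widehat{X^{\times n}}: \mO(\Sigma_n)^\op \to \Strat$, $\Sigma_n/H \mapsto (X^{\times n})^H$, over the orbit category, which has the same colimit by cofinality. This diagram is a \emph{free diagram} (\cref{Free}), hence projectively cofibrant (\cref{cell}, proved by an explicit cell decomposition in $\Fun(\mO(\Sigma_n)^\op,\Strat)$ handling the stratification level by level), so its strict colimit computes the homotopy colimit. The fixed-point objects $(X^{\times n})^H$ are themselves just smaller products $X^{\times m}$ with $m$ the index of $H$, and weak equivalences in $\Strat$ are closed under finite products because the model structure is cartesian and every object is cofibrant. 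Without this (or an equivalent) argument, your acyclicity step — that the layers $(Y/X)^{\times n}_{\Sigma_n}$ of the pushout filtration are trivial cofibrations — does not go through, so as written the proposal is incomplete at precisely the point where the real work lies. (A smaller inaccuracy: cartesianness gives you the pushout-product axiom and closure of weak equivalences under finite products of cofibrant objects, not stability of trivial cofibrations under products per se; this is harmless here since every object of $\Strat$ is cofibrant, but it should be stated that way.)
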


The next definitions  are \cite[\S 2]{farjoun2006homotopy}.

\begin{definition}
	
Let $\mD$ be a category. A functor $\phi: \mD \to \Set$ is a free diagram if it is a coproduct 
of functors of the form $\mD(X,-)$ for some $X \in \mD.$	
	
\end{definition}

\begin{definition}Let $\mD$ be a category and $\phi: \mD \to \Set$ a functor. 
A collection $\{ (\zeta_\alpha, X_\alpha) \mid \alpha \in \mJ \}$, where $X_\alpha \in \mD$ and $\zeta_\alpha \in \phi(X_\alpha)$ for every $\alpha \in \mJ$ corresponds to a transformation
$\theta:  \coprod_{\alpha \in \mJ} \mD(X_\alpha,-) \to \phi.$ 
We say that $\{ (\zeta_\alpha, X_\alpha) \mid \alpha \in \mJ \}$ generates $\phi$ if $\theta$ is an isomorphism. In this case we call $\{ (\zeta_\alpha, X_\alpha) \mid \alpha \in \mJ \}$ a collection of generators of $\phi.$
	
\end{definition}

\begin{remark}
A functor $\mD \to \Set$ is a free diagram if and only if there is a collection of generators.
But in general the collection of generators is not unique.
	
\end{remark}

\begin{definition}
	
Let $\mD$ be a category. A functor $\phi: \mD \to \Set^{\Delta^\op}$ is a free diagram if the following two conditions hold:

\begin{enumerate}
\item For every $\n \geq 0$ the functor $  \mD \xrightarrow{\phi} \Set^{\Delta^\op} \xrightarrow{\ev_\n} \Set$ is a free diagram, where $\ev_\n$ evaluates at $n$.

\vspace{1mm}

\item For every $\n \geq 0$ there is a collection of generators $\mQ_\n= \{ (\zeta_\alpha, X_\alpha) \mid \alpha \in \mJ_\n \}$ of $\ev_\n \circ \phi$ such that $\cup_{\n \geq 0}\mQ_\n$ is closed under degeneracies, i.e. for any order preserving surjection $\sigma: [\m] \to [n]$ and $\alpha \in \mJ_\n$ the map $\sigma^*: \ev_\n(\phi(X_\alpha)) \to \ev_\m(\phi(X_\alpha)) $ sends $\zeta_\alpha$ to $\zeta_\beta$ for some $\beta \in \mJ_\m$ with $X_\beta=X_\alpha.$

\end{enumerate}

\end{definition}

\begin{definition}
	
Let $\mD$ be a category. A functor $\phi: \mD \to \Strat$ is a free diagram if the following two conditions hold:
	
\begin{enumerate}
\item The functor $\phi: \mD \to \Strat \xrightarrow{\mathrm{forget}} \Set^{\Delta^\op}	$ is a free diagram.
	
\item For every $\n \geq 0$ the functor $  \mD \xrightarrow{\phi} \Strat \to \Set$ is a free diagram, where the last functor assigns the set of thin $\n$-simplices.
			
\end{enumerate}
	
\end{definition}

Next we consider our example of interest.

\begin{notation}
	
Let $\G$ be a group and $\mC$ a category.

\begin{enumerate}

\item Let $\mC[\G]:= \Fun(B\G,\mC)$ be the category of objects in $\mC$ with $\G$-action.

\vspace{1mm}
		
\item The orbit category of $\G$ is the full subcategory $\mO(\G) \subset \Set[\G]$ spanned by the $\G$-sets of the form $\G/H$ for some subcategory $H$ of $\G.$

\end{enumerate}
			
\end{notation}

\begin{notation}
Let $\G$ be a group. The canonical embedding $B\G \subset \Set[\G]$ hitting $\G$ induces an embedding $B\G \subset \mO(\G).$ 
Let $\mC$ be a category that admits small limits. The induced functor $$ \Fun(\mO(\G),\mC) \to \Fun(B\G,\mC)=\mC[\G]$$ admits a right adjoint that sends $X \in \mC[\G]$ to the functor
$\widehat{X}: \mO(\G) \to \mC, \G/H \mapsto X^H.$ 	
	
\end{notation}

\begin{remark}
The functor $B\G \subset \mO(\G)$ is cofinal as functor of $(1,1)$-categories.
Hence for every $(1,1)$-category $\mC$ that admits the necessary colimits and object $X \in \mC[\G]$ the canonical map
$$  X_\G:=\colim(X) \to \colim(\widehat{X}) $$ is an equivalence.
	
\end{remark}

\begin{example}\label{Free}Let $\G$ be a group.
	
\begin{enumerate}
	
\item Let $X $ be a set with $\G$-action.
The functor $\widehat{X}: \mO(\G)^\op \to \Set, \G/ H \mapsto X^H $ is a free diagram.

\item Let $X $ be a simplicial set with $\G$-action.
The functor $\widehat{X}: \mO(\G)^\op \to \Set^{\Delta^\op}, \G/ H \mapsto X^H $ is a free diagram.

\item Let $X $ be a stratified simplicial set with $\G$-action.
The functor $\widehat{X}: \mO(\G)^\op \to \Strat, \G/ H \mapsto X^H $ is a free diagram.
	
\end{enumerate}

The first and second examples are \cite[\S 2.7.]{farjoun2006homotopy}.
The third example follows immediately from the first and second since the forgetful 
functors $\Strat \to \Set^{\Delta^\op}, \Strat \to \Set^{\bN}$ assigning the underlying simplicial set, the 
sets of thin arrows, respectively, preserve small limits.
	
\end{example}

\begin{lemma}\label{cell}
	
Let $\mD$ be a category.
Every free diagram $X: \mD \to \Strat$ is projectively cofibrant.	
	
\end{lemma}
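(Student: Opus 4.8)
The plan is to show that the canonical map $\emptyset \to X$ from the initial object is a projective cofibration, by factoring it as $\emptyset \to X^\flat \to X$ and checking that each factor is one. Here $X^\flat\colon \mD \to \Strat$ denotes the diagram with the same underlying simplicial diagram as $X$ but carrying levelwise the minimal stratification, i.e. $X^\flat = (-)^\flat_*\, U_* X$, where $U\colon \Strat \to \sSet$ is the forgetful functor of \cref{exammp}, $(-)^\flat$ its left adjoint, and $(-)^\flat_*, U_*$ the induced functors on $\mD$-diagrams. I work in the projective model structure on $\Fun(\mD,\Strat)$, which exists because $\Strat$ is combinatorial by \cite[Theorem 1.25.]{Complicial}; its generating cofibrations are the maps $F_d(i)$ with $d \in \mD$ and $i$ a generating cofibration of $\Strat$, where $F_d$ is left adjoint to evaluation $\ev_d$. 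Since fibrations and weak equivalences are detected objectwise, each $\ev_d$ is right Quillen, so every $F_d$ is left Quillen and carries cofibrations of $\Strat$ (which are exactly the monomorphisms) to projective cofibrations; projective cofibrations are closed under coproduct, pushout, transfinite composite and retract.

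For $\emptyset \to X^\flat$ I would argue as follows. By condition (1) the underlying diagram $U_* X\colon \mD \to \sSet$ is a free diagram of simplicial sets, hence projectively cofibrant via its skeletal cell decomposition \cite[\S 2]{FarjounDiagram}, whose cells are the maps $F_{d_\alpha}(\partial\Delta^n \to \Delta^n)$ indexed by the non-degenerate generators $(\zeta_\alpha,d_\alpha)$. Postcomposition $(-)^\flat_*$ is a left adjoint (its right adjoint is $U_*$), hence preserves the colimits assembling this cell structure as well as the initial object, and there is a natural identification $(-)^\flat_* \circ F_{d} \simeq F_{d} \circ (-)^\flat$ (both are left adjoint to $U \circ \ev_d$). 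Transporting the cell structure along $(-)^\flat_*$ therefore presents $\emptyset \to X^\flat$ as a transfinite composite of pushouts of coproducts of the projective cofibrations $F_{d_\alpha}\big((\partial\Delta^n)^\flat \to (\Delta^n)^\flat\big)$, so $X^\flat$ is projectively cofibrant.

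For $X^\flat \to X$, note that this map is the identity on underlying simplicial diagrams and only enlarges the stratification. Let $m_n\colon (\Delta^n)^\flat \to (\Delta^n)^{t}$ be the elementary thinning that marks the unique non-degenerate top $n$-simplex; it is a monomorphism, so each $F_d(m_n)$ is a projective cofibration. For every $n$, condition (2) gives a set of generators $\mT_n$ of the free set-valued diagram $d \mapsto \{\text{thin } n\text{-simplices of } X(d)\}$; a generator $\beta \in \mT_n$ is a thin $n$-simplex $\zeta_\beta$ of $X(d_\beta)$ and classifies a map $F_{d_\beta}\big((\Delta^n)^\flat\big) \to X^\flat$. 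I would then check that the square with left edge $\coprod_n \coprod_{\beta\in\mT_n} F_{d_\beta}(m_n)$ and top edge the classifying map is a pushout with lower-right corner $X$: since each $m_n$ is an isomorphism on underlying simplicial sets the pushout agrees with $X^\flat$ underlyingly, and a simplex is thin in the pushout iff it is degenerate (already thin in $X^\flat$) or equals $X(g)(\zeta_\beta)$ for some $g$, and freeness of the thin-simplex diagram makes these images exhaust exactly the thin $n$-simplices of $X$. Thus $X^\flat \to X$ is a pushout of a coproduct of projective cofibrations, and the composite $\emptyset \to X^\flat \to X$ is a projective cofibration.

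The hard part will be the skeletal identification underlying the first factor: verifying that a free simplicial diagram is genuinely built by the stated boundary pushouts requires the coproduct-of-representables structure together with the degeneracy-closure of $\bigcup_n \mQ_n$, which is exactly what ensures that the degenerate simplices are produced freely and that the pushouts create no spurious identifications. I take this from \cite{FarjounDiagram}; without it, it would be the technical core of the proof. By contrast the second factor is soft, because thinness is preserved by all diagram maps, so the thin simplices form a genuine subfunctor and the idempotence of marking renders harmless the generators $\beta$ for which some $X(g)(\zeta_\beta)$ is degenerate.
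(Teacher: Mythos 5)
Your proof is correct and follows essentially the same route as the paper: both factor $\emptyset \to X$ through the minimally stratified diagram $X^\flat$ (the paper's $X_{\leq 0}$), deduce its projective cofibrancy from the $\mD$-CW structure of free simplicial diagrams in \cite{FarjounDiagram} transported along $(-)^\flat$, and then realize the remaining thinnings as pushouts of the maps $F_{d_\beta}\bigl((\Delta^n)^\flat \to (\Delta^n)^t\bigr)$ indexed by generators of the thin-simplex diagrams. The only difference is organizational: the paper filters $X$ by the dimension of thin simplices and attaches the thinnings one dimension at a time, whereas you attach them all in a single pushout, which is legitimate because the elementary thinnings are isomorphisms on underlying simplicial sets and hence independent of one another.
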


\begin{proof}
	
For every $\n \geq 0$ and $X \in \Strat$ let $X_{\leq \n} \in \Strat$ be the stratified simplicial subset of $X$ such that the inclusion $ X_{\leq \n} \to X$ in $\Strat$ is the identity on underlying simplicial sets and on thin simplices of dimension smaller $\n+1$ and such that all thin simplices of  $ X_{\leq \n}$ of dimension larger $n$ are degenerate.
Then $X_{\leq 0}$ is the minimal stratification on the underlying simplicial set of $X.$ 
For every $\m \geq \n \geq 0$ we obtain a functor $ (-)_{\leq \n}: \Strat \to \Strat$
and transformations $ (-)_{\leq \n} \to (-)_{\leq \m}, (-)_{\leq \m} \to \id$.
Trivially, the inclusions $  X_{\leq \n} \to  X$ in $\Strat$ give rise to an isomorphism
$ \colim(X_{\leq0} \to X_{\leq1} \to ...  ) \to X$ in $\Strat.$ 

The functor $ (-)_{\leq \n}: \Strat \to \Strat$ gives rise to a functor $  (-)_{\leq \n}: \Fun(\mD,\Strat) \to \Fun(\mD, \Strat)$ which we denote by the same name.
So for every functor $X: \mD \to \Strat$ and $\n \geq 0$ there are inclusions
$ X_{\leq \n} \to X$ in $\Fun(\mD,\Strat)$ that give rise to an isomorphism
$ \colim(X_{\leq0} \to X_{\leq 1} \to ...  ) \to X$ in $\Fun(\mD,\Strat).$ 
We prove first that for every functor $X: \mD \to \Strat$, which is a free diagram, 
the functor $X_{\leq 0} : \mD \to \Strat$ is projectively cofibrant.

For that it is enough to prove the following: for every functor $Y: \mD \to \Set^{\Delta^\op}$, which is a free diagram, the functor $Y^\flat: \mD \to \Strat$ is projectively cofibrant.
By \cite[Proposition 2.5.]{farjoun2006homotopy} the functor $Y: \mD \to \Set^{\Delta^\op}$ is a $\mD$-CW-complex, i.e. $Y$ is the sequential colimit of a diagram $Y^{-1} := \emptyset \to Y^0 \to Y^1 \to ...$ and for every $\n \geq 0 $ the morphism $Y^{\n-1} \to Y^{n} $ is the cobase change of a coproduct of morphisms of the form 
\begin{equation*}
\Map_\mD(Z,-) \times \partial\bD^\n  \to \Map_\mD(Z,-) \times \bD^\n 
\end{equation*}
for $\n \geq 0$ and $ Z \in \mD. $
Since the functor $(-)^\flat: \Set^{\Delta^\op} \to \Strat$ is a left adjoint and so preserves colimits, the functor $Y^\flat: \mD \to \Strat$ is a sequential colimit of the diagram $(Y^{-1})^\flat = \emptyset \to (Y^0)^\flat \to (Y^1)^\flat \to ... $ in $\Strat$ and for every $\n \geq 0 $ the morphism $(Y^{\n-1})^\flat \to (Y^{n})^\flat $ is the cobase change of a coproduct of morphisms of the form 
\begin{equation}\label{morti}
\Map_\mD(Z,-)\times (\partial\bD^\n)^\flat  \to \Map_\mD(Z,-)\times (\bD^\n)^\flat
\end{equation}
for $ \n \geq 0$ and $ Z \in \mD.$
The functor $\Fun(\mD,\Strat) \to \Strat$ evaluating at $Z \in \mD$ is right adjoint to the functor
$ A \mapsto \Map_\mD(Z,-) \times A.$ Since fibrations and trivial fibrations for the projective model structure are objectwise, the functor $\Fun(\mD,\Strat) \to \Strat$ evaluating at $Z \in \mD$ is a right Quillen functor so that its left adjoint is a left Quillen functor. 
For every $\n \geq 0$ the map $ (\partial\bD^\n)^\flat  \to (\bD^\n)^\flat $ is a cofibration in $\Strat$
so that for every $Z \in \mD$ the morphism (\ref{morti}) in $\Fun(\mD,\Strat) $ is a projective cofibration. 
Since cofibrations in any model category are closed under cobase change and transfinite compositions, we find that for every $\n \geq 0$ the map $(Y^{\n-1})^\flat  \to (Y^\n)^\flat$
and so the map $\emptyset =(Y^{-1} )^\flat \to Y^\flat $ are projective cofibrations. In other words $Y^\flat$ is projectively cofibrant.

We have seen that for every functor $X: \mD \to \Strat$ the functor $X_{\leq 0}$ is projectively cofibrant.
Consequently, it suffices to show that for every $\n \geq 0$ the inclusion $ X_{\leq \n-1} \to X_{\leq \n}$ is a projective cofibration. For every $\n \geq 0$ the canonical map $ (\bD^\n)^\flat \to (\bD^\n)^\rt$ is a cofibration in $\Strat$ so that for every $Z \in \mD$ the morphism 
\begin{equation}\label{morti2}
\Map_\mD(Z,-)\times (\bD^\n)^\flat \to\Map_\mD(Z,-)\times (\bD^\n)^\rt \end{equation}
in $\Fun(\mD,\Strat)$ is a projective cofibration.

Since cofibrations in any model category are closed under cobase change and transfinite compositions, it is enough to see that for every $\n \geq 0$ the inclusion $ X_{\leq \n-1} \to X_{\leq \n}$ is a cobase change of a coproduct of morphisms of the form (\ref{morti2}) for $Z \in \mD$.

For every $\n \geq 0$ the functor $ \mD \xrightarrow{X} \Strat \xrightarrow{ } \Set$,
where the last functor takes the thin $\n$-simplices, is a free diagram and so there is a collection of generators $\mQ= \{ (\zeta_\alpha, T_\alpha) \mid \alpha \in \mJ \}$ of the latter functor.
Let $\mJ' \subset \mJ$ be the subset of elements $\alpha$ such that the thin $\n$-simplex $ \zeta_\alpha $ of $ X(T_\alpha)$ is degenerate.

For every $\alpha \in \mJ$ the thin $\n$-simplex $ \zeta_\alpha $ of $ X(T_\alpha)$ corresponds to a map $ \Map_{\mD}(T_\alpha,-) \times (\Delta^\n)^\rt \to X$ in $\Fun(\mD,\Strat)$
that induces a map $ \Map_{\mD}(T_\alpha,-) \times (\Delta^\n)^\rt \to X_{\leq \n}$ in $\Fun(\mD,\Strat)$.
We obtain a map $$\coprod_{\alpha \in \mJ'} \Map_\mD(T_\alpha,-) \times (\Delta^\n)^\rt \to X_{\leq \n}$$ in $\Fun(\mD,\Strat).$
We obtain a commutative square in $\Fun(\mD,\Strat)$:
$$\begin{xy}
 \xymatrix{
\coprod_{\alpha \in \mJ'} \Map_\mD(T_\alpha,-) \times (\Delta^\n)^\flat \ar[d] \ar[r]
& \coprod_{\alpha \in \mJ'} \Map_\mD(T_\alpha,-) \times (\Delta^\n)^\rt  \ar[d]
\\ 
X_{\leq \n-1} \ar[r]  & X_{\leq \n}.
}
 \end{xy}$$	
We complete the proof by showing that this commutative square is a pushout square in
$\Fun(\mD,\Strat)$.
Both horizontal maps in the latter square forget to the identities in $\Fun(\mD,\Set^{\Delta^\op})$
and induce objectwise the identities on thin simplices of dimension different from $n.$ 
In particular, the latter square lies over a pushout square in $\Fun(\mD,\Set^{\Delta^\op})$.
The forgetful functor $\Strat \to \Set^{\Delta^\op}$ admits a right adjoint taking the maximal stratification and so preserves small colimits. Hence the map in $\Fun(\mD,\Strat)$ from the pushout to $X_{\leq \n}$ forgets to the identity in $\Fun(\mD,\Set^{\Delta^\op})$, and 
induces objectwise the identities on thin simplices of dimension different from $n.$ 
So it remains to see that for every $Z \in \mD$ every thin $\n$-simplex of $X_{\leq \n}(Z)$, i.e. thin $\n$-simplex of $X(Z)$, belongs to the pushout evaluated at $Z.$
By definition every degenerate $\n$-simplex of $X(Z)$ belongs to $X_{\leq \n-1}(Z)= X(Z)_{\leq \n-1}$. Hence we need to see that every non-degenerate thin $\n$-simplex $\sigma$ of $X(Z)$ belongs to the pushout evaluated at $Z.$
By generation there is an $\alpha \in \mJ$ and a morphism $\kappa: T_\alpha \to Z$
such that the induced map $X(\kappa): X(T_\alpha) \to X(Z)$ sends $\zeta_\alpha$ to $\sigma.$ 
If $\zeta_\alpha$ would be degenerate, also the image $\sigma$ of $\zeta_\alpha$ under $X(\kappa)$ 
would be degenerate. So $\zeta_\alpha$ is non-degenerate, so that $\alpha \in \mJ'$ and
$\sigma$ is the image of $\kappa$ and the unique non-degenerate $\n$-simplex of $\Delta^\n$ under the right vertical map of the latter square.

\end{proof}

\begin{corollary}\label{projcof}
	
Let $\G$ be a group and $X $ an object in $\Strat$ with $\G$-action. 
The functor $ \mO(\G)^\op \to \Strat, 
\G/ H \mapsto X^H $ is projectively cofibrant. 
	
\end{corollary}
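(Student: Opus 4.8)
The plan is to derive this corollary directly from the two results immediately preceding it, since the functor in question is exactly the one already shown to be a free diagram. First I would observe that the functor $\mO(\G)^\op \to \Strat$, $\G/H \mapsto X^H$ appearing in the statement is precisely the functor $\widehat{X}$ of \cref{Free}, with $X$ the given object of $\Strat$ carrying a $\G$-action. Here $X^H$ denotes the $H$-fixed points, i.e.\ the value $\widehat{X}(\G/H)$, so there is nothing to set up beyond unwinding the notation.

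With that identification in place, the argument is a two-step invocation. By \cref{Free}(3), the functor $\widehat{X}: \mO(\G)^\op \to \Strat$, $\G/H \mapsto X^H$ is a free diagram; this is exactly the content of the third example there, which in turn follows from the first and second examples because the forgetful functors $\Strat \to \Set^{\Delta^\op}$ and $\Strat \to \Set^{\bN}$ preserve limits and therefore fixed points. Then by \cref{cell}, every free diagram $\mD \to \Strat$ is projectively cofibrant, applied with $\mD = \mO(\G)^\op$. Chaining the two yields that $\G/H \mapsto X^H$ is projectively cofibrant, which is the claim.

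I do not anticipate a genuine obstacle, as this is a formal corollary: the real work is carried out in \cref{cell}, where the free diagram is exhausted by the filtration $X_{\leq 0} \to X_{\leq 1} \to \cdots$ and each inclusion $X_{\leq n-1} \to X_{\leq n}$ is exhibited as a pushout of cofibrations $\mD(Z,-)\times (\bD^n)^\flat \to \mD(Z,-)\times (\bD^n)^t$. The only point worth a sentence of care is confirming that $\widehat{X}$ really meets the definition of a free diagram into $\Strat$, namely that its underlying simplicial-set diagram and its diagram of thin simplices are both free; but this is exactly what \cref{Free}(3) records, so the corollary follows at once.
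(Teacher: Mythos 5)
Your proposal is correct and follows exactly the paper's own proof: the paper also observes that $\widehat{X}$ is a free diagram by \cref{Free} and then applies \cref{cell}. Nothing further is needed.
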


\begin{proof}
The diagram $ \widehat{X}: \mO(\G)^\op \to \Strat, \G/ H \mapsto X^H $ is free by \cref{Free}.
We apply \cref{cell}.
	
\end{proof}

The next proposition is an adaption of \cite[Theorem 1.3.]{casacuberta2016localizations}. We are grateful to David White who made us aware of \cite{casacuberta2016localizations}.

\begin{proposition}\label{Symso} For every $\n \geq 0$ the functor
$ \Strat \to \Strat, X \mapsto (X^{\times \n})_{\Sigma_\n}$ preserves weak equivalences.
	
\end{proposition}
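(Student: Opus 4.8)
The plan is to prove that the functor $X \mapsto (X^{\times n})_{\Sigma_n}$ on stratified simplicial sets preserves weak equivalences, following the strategy of \cite{Casacuberta} adapted to the complicial model structure on $\Strat$. The core idea is that symmetric powers preserve weak equivalences between cofibrant objects because the $\Sigma_n$-action on $X^{\times n}$ is sufficiently free, and that every object is weakly equivalent to a cofibrant one since every object is cofibrant in the model structure of \cite[Theorem 1.25.]{Complicial} (the cofibrations are exactly the monomorphisms, so \emph{every} stratified simplicial set is cofibrant).

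First I would reduce to a statement about the homotopy orbit construction. The strict orbit $(X^{\times n})_{\Sigma_n}$ is the ordinary (strict) colimit of the $\Sigma_n$-action, whereas the homotopy orbit is the colimit over $\mO(\Sigma_n)^\op$ of the fixed-point diagram $\widehat{X^{\times n}}$ via the cofinality of $B\Sigma_n \subset \mO(\Sigma_n)$ for $(1,1)$-categories, as recorded in the excerpt's remark following \cref{Free}. The key input is \cref{projcof}: for any $X \in \Strat$ with $\Sigma_n$-action, the fixed-point diagram $\mO(\Sigma_n)^\op \to \Strat$, $\Sigma_n/H \mapsto (X^{\times n})^H$, is projectively cofibrant. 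Applying this to $X^{\times n}$ equipped with the permutation $\Sigma_n$-action, I obtain that $\widehat{X^{\times n}}$ is projectively cofibrant, so its colimit computes the \emph{homotopy} colimit. Thus the strict orbit $(X^{\times n})_{\Sigma_n}$ agrees with the homotopy orbit $(X^{\times n})_{h\Sigma_n}$ up to weak equivalence, naturally in $X$.

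Next I would verify that a weak equivalence $f\colon X \to Y$ in $\Strat$ induces a weak equivalence $f^{\times n}\colon X^{\times n} \to Y^{\times n}$ of objects with $\Sigma_n$-action. Since the model structure is cartesian (\cite[Theorem 1.25.]{Complicial}) and every object is cofibrant, the product functor $(-)\times(-)$ is a left Quillen bifunctor and hence preserves weak equivalences in each variable between cofibrant objects; iterating gives that $f^{\times n}$ is a weak equivalence. Moreover $f^{\times n}$ is $\Sigma_n$-equivariant, and passing to fixed-point diagrams it induces an objectwise weak equivalence $\widehat{X^{\times n}} \to \widehat{Y^{\times n}}$ of projectively cofibrant functors $\mO(\Sigma_n)^\op \to \Strat$. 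A projective weak equivalence between projectively cofibrant diagrams is preserved by the colimit functor, which is left Quillen for the projective model structure; therefore $(X^{\times n})_{\Sigma_n} \to (Y^{\times n})_{\Sigma_n}$ is a weak equivalence.

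The main obstacle I expect is the second step: establishing that $f^{\times n}$ is a weak equivalence purely from cartesianness. The subtlety is that cartesian closedness of the model structure guarantees the pushout-product axiom, which immediately yields that $(-)\times A$ preserves weak equivalences between cofibrant objects when $A$ is cofibrant; since all objects are cofibrant this is clean, but one must confirm that the equivariant enhancement does not require a more refined (e.g.\ $\Sigma_n$-projective or genuine-equivariant) cofibrancy condition. This is precisely where \cref{projcof} does the essential work: it shows the naive permutation action already produces a projectively cofibrant fixed-point diagram, bypassing any need for a norm or for freeness of the action, and reduces the whole statement to the standard fact that left Quillen functors preserve weak equivalences between cofibrant objects.
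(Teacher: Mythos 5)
Your overall strategy is the same as the paper's: use \cref{projcof} to identify the strict orbit $(X^{\times n})_{\Sigma_n}=\colim(\widehat{X^{\times n}})$ with a homotopy colimit over $\mO(\Sigma_n)^\op$, and then check that a weak equivalence $f\colon X\to Y$ induces an objectwise weak equivalence of the fixed-point diagrams. However, there is a genuine gap in how you carry out that objectwise check. You argue that $f^{\times n}$ is a weak equivalence (correct, by cartesianness and cofibrancy of all objects) and then assert that ``passing to fixed-point diagrams it induces an objectwise weak equivalence $\widehat{X^{\times n}}\to\widehat{Y^{\times n}}$.'' This does not follow: the value of $\widehat{X^{\times n}}$ at $\Sigma_n/H$ is the strict fixed-point object $(X^{\times n})^H$, a limit, and taking $H$-fixed points does not preserve weak equivalences in general. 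Knowing that $f^{\times n}$ is a weak equivalence only handles the free orbit $\Sigma_n/\{e\}$; for a general subgroup $H$ you have said nothing about $(X^{\times n})^H\to(Y^{\times n})^H$.

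The missing ingredient, which is exactly what the paper supplies, is the explicit identification of the fixed points of the permutation action: $(X^{\times n})^H\cong X^{\times m}$, where $m$ is the number of orbits of $H$ acting on $\{1,\dots,n\}$ (the paper cites \cite[Remark 4.1.]{RecognitionBadzioch} for this), and under this identification the induced map on $H$-fixed points is $f^{\times m}$. Once you have that, the same cartesianness-plus-cofibrancy argument you already used for $f^{\times n}$ shows each $f^{\times m}$ is a weak equivalence, and the projective cofibrancy from \cref{projcof} lets the colimit functor (left Quillen for the projective model structure) carry the objectwise weak equivalence to a weak equivalence of orbits. So the gap is fillable by a short computation, but as written your argument skips the step on which the whole reduction to the orbit category depends.
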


\begin{proof}
For every map $X \to Y$ in $\Strat$ the induced map
$ (X^{\times \n})_{\Sigma_\n} \to (Y^{\times \n})_{\Sigma_\n}$
is the induced map on colimits $\colim(\widehat{X^{\times \n}}) \to \colim(\widehat{Y^{\times \n}})$
and by \cref{projcof} the induced map on homotopy colimits.
Thus the induced map
$ (X^{\times \n})_{\Sigma_\n} \to (Y^{\times \n})_{\Sigma_\n}$
is a weak equivalence if for every $\Sigma_\n/H \in \mO(\Sigma_\n)$
the induced morphism 
$$\widehat{X^{\times \n}}(\Sigma_\n/H)= (X^{\times \n})^H \to \widehat{Y^{\times \n}}(\Sigma_\n/H) = (Y^{\times \n})^H $$ is a weak equivalence. 
The latter map identifies with the map $X^{\times\m}  \to Y^{\times\m}$,
where $\m \leq \n$ is the index of $H$ in $\Sigma_\n$ \cite[Remark 4.1.]{badzioch2001recognition}.
So the statement follows from the fact that weak equivalences in $\Strat$ are closed under finite products since the model structure on $\Strat$ is cartesian and every object is cofibrant
\cite[Theorem 1.25.]{Verity2023}.

\end{proof}

\begin{corollary}\label{Symsoi} The functor
$$\Sym: \Strat \to \Strat, X \mapsto \coprod_{\n \geq 0} (X^{\times \n})_{\Sigma_\n} $$ preserves weak equivalences.	
\end{corollary}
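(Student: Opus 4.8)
The plan is to deduce this directly from \cref{Symso} together with the single additional observation that small coproducts in $\Strat$ preserve weak equivalences. By definition $\Sym = \coprod_{n \geq 0} (-)^{\times n}_{\Sigma_n}$, so for any map $X \to Y$ in $\Strat$ the induced map $\Sym(X) \to \Sym(Y)$ is the coproduct over $n \geq 0$ of the maps $(X^{\times n})_{\Sigma_n} \to (Y^{\times n})_{\Sigma_n}$. If $X \to Y$ is a weak equivalence, then by \cref{Symso} each of these component maps is a weak equivalence. Hence it remains only to check that a small coproduct of weak equivalences is again a weak equivalence.

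For that latter point I would argue formally. The coproduct functor $\coprod \colon \prod_{n \geq 0} \Strat \to \Strat$ is left adjoint to the diagonal, and the diagonal preserves fibrations and trivial fibrations since these are detected componentwise in the product model structure. Thus $\coprod$ is a left Quillen functor. Because the cartesian combinatorial model structure on $\Strat$ of \cite[Theorem 1.25.]{Complicial} has every object cofibrant, every object of the product model category $\prod_{n \geq 0} \Strat$ is cofibrant as well, so by Ken Brown's lemma the left Quillen functor $\coprod$ preserves \emph{all} weak equivalences, not merely those between cofibrant objects.

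Combining the two steps, the coproduct $\coprod_{n \geq 0}\bigl((X^{\times n})_{\Sigma_n} \to (Y^{\times n})_{\Sigma_n}\bigr)$ is a weak equivalence whenever $X \to Y$ is, which is exactly the claim that $\Sym$ preserves weak equivalences. The only substantive point is that coproducts are homotopically well behaved; this is where the hypothesis that every object of $\Strat$ is cofibrant does the work, and it is the step I would single out as the one requiring care, even though it follows formally once the model structure is in hand.
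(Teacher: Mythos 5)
Your proof is correct and follows essentially the same route as the paper: reduce to \cref{Symso} componentwise, observe that the coproduct is a left Quillen functor (the paper phrases this as coproducts of trivial cofibrations being trivial cofibrations), and conclude via Ken Brown's lemma using that every object of $\Strat$ is cofibrant.
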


\begin{proof}
By \cref{Symso} for every $\n \geq 0$ the functor
$ \Strat \to \Strat, X \mapsto (X^{\times \n})_{\Sigma_\n}$ preserves weak equivalences.
In any model category arbitrary coproducts of trivial cofibrations are trivial cofibrations.
So by Ken-Brown's Lemma arbitrary coproducts of weak equivalences between cofibrant objects are weak equivalences. Since every object of $\Strat $ is cofibrant, the functor
$\Sym: \Strat \to \Strat$ preserves weak equivalences.	
	
\end{proof}

\begin{proof}[Proof of \cref{Strattrans}]
	
This follows from \cite[Theorem 5.2.]{white2017model} and \cref{Symsoi}.
	
\end{proof}

\subsection{A complicial model for strict symmetric monoidal $\infty$-categories}

Next we establish a presentation of strict symmetric monoidal $\infty$-categories by stratified simplicial commutative monoids (\cref{Sing}).
This leads to a complicial presentation of categorical homology via the Street nerve, which is a higher-categorical analogue of the simplicial presentation of homology via the singular complex.

The canonical functor $\mathrm{Strat} \to \mathrm{Strat}[\mathrm{weak} \ \mathrm{equivalences}]$ preserves finite products since weak equivalences in $\mathrm{Strat} $ are closed under finite products since the model structure on $\Strat$ is cartesian and every object is cofibrant
\cite[Theorem 1.25.]{Verity2023}. Hence the latter functor induces a functor
$$ \C\mon(\mathrm{Strat}) \to \C\mon(\mathrm{Strat}[\mathrm{weak} \ \mathrm{equivalences}])$$ that inverts weak equivalences and so gives rise to a functor 
\begin{equation*}\label{cancomp} \C\mon(\mathrm{Strat})[\mathrm{weak} \ \mathrm{equivalences}] \to \C\mon(\mathrm{Strat}[\mathrm{weak} \ \mathrm{equivalences}]) \simeq \C\mon(\infty\Cat),\end{equation*}
where the last equivalence is (\ref{equika}).

\vspace{1mm}

We will prove the following:

\begin{theorem}\label{Sing}\label{polut}
	
The induced functor
\begin{equation*}\label{cancomp} \C\mon(\mathrm{Strat})[\mathrm{weak} \ \mathrm{equivalences}] \to \C\mon(\mathrm{Strat}[\mathrm{weak} \ \mathrm{equivalences}]) \simeq \C\mon(\infty\Cat).\end{equation*}
is an equivalence.

	
\end{theorem}

	
	

We will use the following adjunction of \cite[Remark 2.13.]{Verity2023}:

\begin{notation}\label{realiz}

Let $|-|: \Fun(\Delta^\op, \Set^{\rt\Delta^\op}) \cong \sSet^{\rt\Delta^\op}\rightleftarrows \Set^{\rt\Delta^\op}: \Sing $ 
be the unique left adjoint $\Set^{\rt\Delta^\op}$-linear functor extending the functor $\Delta \to \Set^{\rt\Delta^\op}, [n] \mapsto (\Delta^\n)^{\#}$ along the embedding
$$\Delta \subset  \Fun(\Delta^\op, \Set) \to \Fun(\Delta^\op, \Set^{\rt\Delta^\op})$$ induced by the diagonal embedding $\iota: \Set \to \Set^{\rt\Delta^\op}$.

\end{notation}

\begin{lemma}\label{realprod}

The realization $|-|: \sSet^{\rt\Delta^\op}\cong \Fun(\Delta^\op, \Set^{\rt\Delta^\op}) \to \Set^{\rt\Delta^\op}$ preserves finite products.	
\end{lemma}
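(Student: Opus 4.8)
The plan is to compute $|-|$ explicitly as a coend and thereby identify it with a ``diagonal'' evaluation, after which product-preservation becomes purely formal. By \cref{realiz} the functor $|-|$ is the unique colimit-preserving $\Set^{t\Delta^\op}$-linear functor sending the representable $\Delta^n$ (pushed into $\Fun(\Delta^\op,\Set^{t\Delta^\op})$ by $\Fun(\Delta^\op,\iota)$) to $(\Delta^n)^{\#}$. First I would record the co-Yoneda decomposition $X \cong \int^{[n]\in\Delta} X_n \otimes \Delta^n$ in $\Fun(\Delta^\op,\Set^{t\Delta^\op})$, where $\otimes$ is the levelwise $\Set^{t\Delta^\op}$-tensoring, i.e.\ the pointwise cartesian product. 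Applying the colimit-preserving $\Set^{t\Delta^\op}$-linear functor $|-|$ then yields
\[ |X| \;\cong\; \int^{[n]\in\Delta} X_n \times (\Delta^n)^{\#}, \]
the product being taken in $\Set^{t\Delta^\op}$.

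The combinatorial heart is the computation of the presheaf $(\Delta^n)^{\#}$ on $t\Delta$. Since the restricted Yoneda embedding $\Strat \hookrightarrow \Set^{t\Delta^\op}$ is fully faithful (\cref{Adjfinprod}), for every $k \geq 0$ one has $(\Delta^n)^{\#}((\Delta^k)^{\flat}) = \Strat((\Delta^k)^{\flat},(\Delta^n)^{\#})$ and $(\Delta^n)^{\#}((\Delta^k)^{t}) = \Strat((\Delta^k)^{t},(\Delta^n)^{\#})$. Because $(\Delta^n)^{\#}$ carries the maximal stratification, the thinness condition on a map into it is vacuous, so both Hom-sets coincide with the full set $\Delta([k],[n])$ of simplicial maps. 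Hence $(\Delta^n)^{\#}$ takes the constant value $\Delta([k],[n])$ on both objects of $t\Delta$ of dimension $k$ (setting $\dim (\Delta^k)^{\flat}=\dim (\Delta^k)^{t}:=k$). Substituting this into the coend and evaluating at $T\in t\Delta$ of dimension $k$, and using that products in $\Set^{t\Delta^\op}$ are pointwise, I get $|X|(T) \cong \int^{[n]} X_n(T) \times \Delta([k],[n])$; the co-Yoneda lemma applied to the simplicial set $[n]\mapsto X_n(T)$ then collapses this to $X_k(T)$. This produces the natural diagonal formula $|X|(T) \cong X_{\dim T}(T)$.

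With the diagonal formula in hand the conclusion is immediate. Both $\Fun(\Delta^\op,\Set^{t\Delta^\op})$ and $\Set^{t\Delta^\op}$ are presheaf categories, so finite products and the terminal object are computed pointwise: $(X\times Y)_n(T) = X_n(T)\times Y_n(T)$, and the terminal diagram is constant at the terminal presheaf. Evaluating the diagonal formula therefore gives $|X\times Y|(T)\cong X_{\dim T}(T)\times Y_{\dim T}(T)\cong (|X|\times|Y|)(T)$ naturally in $T$, and $|{*}|(T)\cong {*}$; thus $|-|$ preserves binary products and the terminal object, hence all finite products.

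I expect the main obstacle to lie in the second step: verifying the collapse $(\Delta^n)^{\#}((\Delta^k)^{\flat}) = (\Delta^n)^{\#}((\Delta^k)^{t})$ and, more delicately, tracking the naturality of the diagonal identification across morphisms of $t\Delta$, which mix the simplicial index with the stratification direction. This is exactly where maximality of the stratification on $(\Delta^n)^{\#}$ is essential. Once it is established, the reduction to pointwise products is formal, and in particular no combinatorial Eilenberg--Zilber-type analysis of $|\Delta^n\times\Delta^m|$ is required.
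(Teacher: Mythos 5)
Your proof is correct, but it takes a genuinely different route from the paper's. The paper argues softly: by uniqueness of Yoneda extension it identifies $|-|\circ\iota_*$ with $(-)^{\#}$ on $\sSet$, deduces that $|\iota_*(\Delta^n)\times\iota_*(\Delta^\m)|\to|\iota_*(\Delta^n)|\times|\iota_*(\Delta^\m)|$ is an isomorphism because $(-)^{\#}$ preserves small limits (\cref{exammp}), and then bootstraps to arbitrary finite products using $\Set^{t\Delta^\op}$-linearity, cartesian-closedness of both presheaf categories, and the fact that $\Fun(\Delta^\op,\Set^{t\Delta^\op})$ is generated under small colimits by the objects $\iota_*(\Delta^n)\times X$. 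You instead compute $|-|$ in closed form: your observation that the maximal stratification makes the thinness condition on maps into $(\Delta^n)^{\#}$ vacuous says precisely that $(\Delta^n)^{\#}$, as a presheaf on $t\Delta$, is the restriction of the representable $\Delta(-,[n])$ along the forgetful functor $u\colon t\Delta\to\Delta$; feeding this into the coend and applying co-Yoneda identifies $|-|$ with restriction along $(u,\mathrm{id})\colon t\Delta\to\Delta\times t\Delta$, i.e.\ $|X|(T)\cong X_{\dim T}(T)$. The naturality you flag as the delicate point is in fact automatic once the isomorphism $(\Delta^n)^{\#}\cong u^*\Delta(-,[n])$ is stated at the level of presheaves rather than value-by-value, since restriction in $t\Delta$ is precomposition and forgetting the stratification is injective on these hom-sets. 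Your diagonal formula is strictly stronger than what the lemma asks for: restriction functors between presheaf categories preserve all limits and colimits pointwise, so you obtain preservation of all limits rather than just finite products, at the cost of the explicit coend computation; the paper's argument is more formal but only yields finite products and needs the generation-plus-cartesian-closedness step that your approach bypasses.
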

\begin{proof}

Let $\iota: \Set \to \Set^{\rt\Delta^\op}$ be the diagonal embedding, which preserves small colimits and small limits and $$\iota_*:  \sSet= \Fun(\Delta^\op, \Set) \to \Fun(\Delta^\op, \Set^{\rt\Delta^\op})\cong \sSet^{\rt\Delta^\op}$$ the induced functor. 
By uniqueness of Yoneda extension the left adjoint functor $$|-| \circ \iota_*: \sSet \to \sSet^{\rt\Delta^\op}\cong \Fun(\Delta^\op, \Set^{\rt\Delta^\op}) \to \Set^{\rt\Delta^\op}$$ and the left adjoint functor $(-)^{\#}: \sSet \to \Set^{\rt\Delta^\op}$ of \cref{exammp} are equivalent since they are equivalent after restriction along the Yoneda embedding.
By \cref{exammp} the functor $(-)^{\#}: \sSet \to \Set^{\rt\Delta^\op}$ preserves small limits and so finite products. Hence for every $n, \m \geq 0$ the canonical map $$|\iota_*(\Delta^\n) \times \iota_*(\Delta^\m) | \cong |\iota_*(\Delta^\n \times \Delta^\m) |  \to |\iota_*(\Delta^\n)|  \times |\iota_*(\Delta^\m)| $$ is an isomorphism.
Thus for every $X, Y \in \Set^{\rt\Delta^\op}$ the canonical map $$|\iota_*(\Delta^\n) \times X \times \iota_*(\Delta^\m) \times Y |   \cong |\iota_*(\Delta^\n) \times \iota_*(\Delta^\m) \times X \times Y | 
\cong |\iota_*(\Delta^\n) \times \iota_*(\Delta^\m)| \times X \times Y 
\to $$$$ | \iota_*(\Delta^\n) | \times | \iota_*(\Delta^\m) | \times X \times Y \cong| \iota_*(\Delta^\n) | \times X  \times | \iota_*(\Delta^\m) | \times Y \cong | \iota_*(\Delta^\n) \times X |  \times | \iota_*(\Delta^\m) \times Y | $$ is an isomorphism.
This implies the result since $ \Set^{\rt\Delta^\op}, \sSet^{\rt \Delta^\op} $ are cartesian closed being categories of presheaves and $\sSet^{\rt \Delta^\op} \cong \Fun(\Delta^\op, \Set^{\rt \Delta^\op})$ is a generated under small colimit by objects of the form $\iota_*(\Delta^\n) \times X$ for $\n \geq 0$ and $X \in  \Set^{\rt\Delta^\op}.$

\end{proof}

\begin{proof}[Proof of \cref{polut}]
	
By \cite[Theorem 2.7.]{Verity2023} there is a left Bousfield localization of the injective model structure on $ \sSet_\inj^{\rt\Delta^\op}$,
where $\sSet$ carries the Kan model structure whose fibrant objects are the 
pre-complicial spaces \cite[Definition 2.5.]{Verity2023}. We write $ \sSet_{\precomplicial}^{\rt\Delta^\op} $ for this model structure.

By \cite[Theorem 1.28.]{Verity2023} the category $ \Set^{\rt\Delta^\op} $ carries a model structure whose cofibrations are the monomorphisms and whose fibrant objects are the pre-complicial sets \cite[Definition 1.23.]{Verity2023}.
We write $ \Set_{\precomplicial}^{\rt\Delta^\op} $ for this model structure.
By \cite[Theorem 2.14.]{Verity2023} the adjunction $$|-|: \sSet_{\precomplicial}^{\rt\Delta^\op} \rightleftarrows \Set_{\precomplicial}^{\rt\Delta^\op}: \Sing $$ of \cref{realiz} is a Quillen equivalence.

By \cite[Proposition 1.35.]{Verity2023} there is a Quillen equivalence $L: \Set_{\precomplicial}^{\rt\Delta^\op} \rightleftarrows \Strat : \iota$ where the right adjoint is the restricted Yoneda embedding and the left adjoint 
preserves finite products by \cref{Adjfinprod}. 
We obtain a Quillen equivalence $$ L \circ |-|:  \sSet_{\precomplicial}^{\rt\Delta^\op}  \rightleftarrows \Set_{\precomplicial}^{\rt\Delta^\op} \rightleftarrows \Strat: \Sing \circ \iota.$$ 
Composing with the Quillen adjunction $$\id: \sSet_{\inj}^{\rt\Delta^\op} \rightleftarrows  \sSet_{\precomplicial}^{\rt\Delta^\op}: \id$$ we obtain a Quillen adjunction $$ L \circ |-|:  \sSet_{\inj}^{\rt\Delta^\op}  \rightleftarrows \Set_{\precomplicial}^{\rt\Delta^\op} \rightleftarrows \Strat: \Sing \circ \iota $$ whose derived counit at any fibrant object is a weak equivalence.

\cref{realprod} guarantees that the left adjoint preserves finite products.
By \cite[Theorem 3.2., Theorem 5.2.]{white2017model} the category $ \C\mon(\sSet_{\inj}^{\rt\Delta^\op})$ admits a right induced model structure since cofibrations and trivial cofibrations in $\sSet_{\inj}^{\rt\Delta^\op}$ are objectwise.

By \cref{Strattrans} the category $ \C\mon(\Strat))$ admits a right induced model structure.
We obtain an induced Quillen adjunction $$ L \circ |-|:  \C\mon(\sSet_{\inj}^{\rt\Delta^\op})  \rightleftarrows \C\mon(\Strat): \Sing \circ \iota $$ whose derived counit at any fibrant object is a weak equivalence. 
So for the induced adjunction on localizations
\begin{equation}\label{adj01}
L \circ |-|:  \C\mon(\sSet_{\inj}^{\rt\Delta^\op})[\mathrm{weak} \ \mathrm{equivalences}]  \rightleftarrows \C\mon(\Strat)[\mathrm{weak} \  \mathrm{equivalences}]: \Sing \circ \iota \end{equation}
the counit is an equivalence and the unit lifts the unit of the adjunction
\begin{equation}\label{adj02} L \circ |-|:  (\sSet_{\inj}^{\rt\Delta^\op})[\mathrm{weak} \  \mathrm{equivalences}]  \rightleftarrows (\Strat)[\mathrm{weak} \ \mathrm{equivalences}]: \Sing \circ \iota. \end{equation}
Hence every $X \in \C\mon(\Strat)[\mathrm{weak}\ \mathrm{equivalences}]$ belongs to essential image of the right adjoint of adjunction \ref{adj01} if and only if the unit of adjunction \ref{adj01} at $X$ is an equivalence if and only if the unit of adjunction \ref{adj02} at the image of $X \in (\Strat)[\mathrm{weak} \ \mathrm{equivalences}]$ is an equivalence if and only if the image of $X \in (\Strat)[\mathrm{weak} \  \mathrm{equivalences}]$ belongs to essential image of the right adjoint of \ref{adj02}.

By \cite[Theorem 3.1.]{schwede2001stable} the category $\C\mon(\sSet) $ admits a right induced (and so combinatorial) model structure from the Kan-model structure on $\sSet.$ We consider the injective model structure on $$ \Fun(\rt\Delta^\op, \C\mon(\sSet)) \cong \C\mon(\sSet^{\rt\Delta^\op}),$$ which we denote by $ \C\mon(\sSet^{\rt\Delta^\op})_\inj$.
Since the classes of weak equivalences of $  \C\mon(\sSet^{\rt\Delta^\op})_\inj$ and $\C\mon(\sSet_{\inj}^{\rt\Delta^\op})$ are both the class of levelwise weak equivalences, 
there is a canonical equivalence $$  \C\mon(\sSet^{\rt\Delta^\op})_\inj[\mathrm{weak} \   \mathrm{equivalences}]  \simeq  \C\mon(\sSet_{\inj}^{\rt\Delta^\op})[\mathrm{weak} \ \mathrm{equivalences}].$$
By \cite[Proposition 4.2.4.4.]{lurie.HTT} the canonical functors $$  \C\mon(\sSet^{\rt\Delta^\op})_\inj[\mathrm{weak} \  \mathrm{equivalences}]  \to \Fun(\rt\Delta^\op, \C\mon(\sSet)[\mathrm{weak} \ \mathrm{equivalences}]), $$
$$  \sSet^{\rt\Delta^\op}_\inj[\mathrm{weak} \  \mathrm{equivalences}]  \to \Fun(\rt\Delta^\op, \sSet[\mathrm{weak} \ \mathrm{equivalences}])\simeq \Fun(\rt\Delta^\op, \infty\Grp)$$
are equivalences.

The canonical functor $ \sSet \to \infty\Grp$ preserves finite products since weak homotopy equivalences are closed under finite products and so induces a functor
$ \C\mon(\sSet) \to \C\mon(\infty\Grp)$ that inverts weak homotopy equivalences. So the latter gives rise to a functor $$ \C\mon(\sSet)[\mathrm{weak} \ \mathrm{equivalences}]) \to \C\mon(\infty\Grp), $$ which is an equivalence by \cite[Theorem 6.4.]{badzioch2002algebraic} in view of \cite[Proposition 4.2.4.4.]{lurie.HTT}.
Hence the canonical functor $$ \C\mon(\sSet^{\rt\Delta^\op}_\inj)[\mathrm{weak} \  \mathrm{equivalences}]  \simeq   \C\mon(\sSet^{\rt\Delta^\op})_\inj[\mathrm{weak} \ \mathrm{equivalences}] $$$$ \simeq \Fun(\rt\Delta^\op, \C\mon(\infty\Grp)) \simeq \C\mon(\Fun(\rt\Delta^\op,\infty\Grp))$$
is an equivalence.
Thus adjunction \ref{adj02} gives an adjunction \begin{equation}\label{adj03}
\Fun(\rt\Delta^\op,\infty\Grp) \rightleftarrows \Strat[\mathrm{weak} \  \mathrm{equivalences}] \end{equation}
whose right adjoint is fully faithful.
Adjunction \ref{adj01} gives an adjunction \begin{equation}\label{adj04}
\C\mon(\Fun(\rt\Delta^\op,\infty\Grp)) \rightleftarrows \C\mon(\Strat)[\mathrm{weak} \  \mathrm{equivalences}] \end{equation}
whose right adjoint is fully faithful and whose local objects are precisely the objects
whose image in $\Fun(\rt\Delta^\op,\infty\Grp)$ is local for the localization \ref{adj03}.
Consequently, the right adjoint of adjunction \ref{adj04} induces an equivalence
$$ \C\mon(\Strat)[\mathrm{weak} \  \mathrm{equivalences}] \simeq \Strat[\mathrm{weak} \  \mathrm{equivalences}] \times_{\Fun(\rt\Delta^\op,\infty\Grp)} \C\mon(\Fun(\rt\Delta^\op,\infty\Grp)) $$$$ \simeq 
\C\mon(\Strat[\mathrm{weak} \  \mathrm{equivalences}]).$$ 
	
\end{proof}

\cref{Sing} implies the following:

\begin{corollary}\label{commodel}
	
Let $(X,\mE)$ be a stratified simplicial set. 
The categorical homology of the $\infty$-category modeled by $(X,\mE)$ is modeled by
$$(\bN[X], \bN[\mE]).$$

\end{corollary}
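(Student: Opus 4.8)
The plan is to reduce the statement to a compatibility between the free strict commutative monoid functors on $\Strat$ and on $\infty\Cat$ under the localization $\gamma\colon \Strat \to \Strat[\mathrm{weak}\ \mathrm{equivalences}] \simeq \infty\Cat$. First I would record what the two sides are. By \cref{free}~(2) together with \cref{leoopi}~(1), the non-reduced categorical homology of any $\infty$-category $\mD$ is $\Omega^\infty(H(\bN)\wedge \mD_+) \simeq \coprod_{n\geq 0}\D^n(\mD) \simeq \Sym(\mD)$, the free strict commutative monoid on $\mD$ in the sense of \cref{heuty}. On the other side, $(\bN[X],\bN[\mE])$ is by construction the free stratified simplicial commutative monoid $\Free(X,\mE)$, whose underlying stratified simplicial set is the strict symmetric power $\coprod_{n\geq 0}((X,\mE)^{\times n})_{\Sigma_n}$. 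Thus the corollary asserts precisely that $\gamma$ carries $\Free(X,\mE)$ to $\Sym(\gamma(X,\mE))$, i.e. that $\gamma$ intertwines the two free functors, and the whole proof is the verification of this intertwining.

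Second, I would show that $\Free\colon \Strat \to \C\mon(\Strat)$ preserves weak equivalences and hence descends. Its underlying functor to $\Strat$ is the strict symmetric power $X \mapsto \coprod_n (X^{\times n})_{\Sigma_n}$, which preserves weak equivalences by \cref{Symsoi}. Since the model structure on $\C\mon(\Strat)$ of \cref{Strattrans} is right-induced from $\Strat$, its weak equivalences are detected by the forgetful functor, so $\Free$ preserves weak equivalences. Therefore $\Free$ descends along the localizations to a functor $\mathbb{L}\Free\colon \infty\Cat \simeq \Strat[\mathrm{weak}\ \mathrm{equivalences}] \to \C\mon(\Strat)[\mathrm{weak}\ \mathrm{equivalences}] \simeq \C\mon(\infty\Cat)$, using the equivalence of the preceding theorem (equivalently \cref{polut}); and because every object of $\Strat$ is cofibrant, the object modeled by $\Free(X,\mE)$ is exactly $\mathbb{L}\Free(\gamma(X,\mE))$.

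Third, I would identify $\mathbb{L}\Free$ with $\Sym$ by uniqueness of left adjoints. The Quillen adjunction $\Free \dashv U$ between $\Strat$ and $\C\mon(\Strat)$ descends to an adjunction on localizations whose right adjoint is the forgetful functor $U\colon \C\mon(\infty\Cat) \to \infty\Cat$; here one uses that the equivalence $\C\mon(\Strat)[\mathrm{weak}\ \mathrm{equivalences}] \simeq \C\mon(\infty\Cat)$ is induced by the finite-product-preserving universal functor $\Strat \to \infty\Cat$ and hence commutes with the two forgetful functors. On the other hand $\Sym\colon \infty\Cat \to \C\mon(\infty\Cat)$ is by \cref{heuty} left adjoint to the same forgetful functor. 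Uniqueness of left adjoints then gives $\mathbb{L}\Free \simeq \Sym$, and evaluating at $\gamma(X,\mE)$ combines with the first paragraph to finish the proof.

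The main obstacle I expect is the bookkeeping in the third step: making sure that the descent of the Quillen adjunction $\Free \dashv U$ really has the honest forgetful functor $\C\mon(\infty\Cat)\to\infty\Cat$ as its right adjoint, so that uniqueness of adjoints can be applied, which requires checking that the equivalence of the preceding theorem is compatible with forgetting down to $\Strat$ and $\infty\Cat$ respectively, and that the descended right adjoint agrees with $U$ on the nose rather than only up to the equivalence. All the genuinely hard homotopical input — that strict symmetric powers preserve weak equivalences of stratified simplicial sets (\cref{Symsoi}), resting on the projective cofibrancy of free diagrams — is already in hand, so the remaining work is formal.
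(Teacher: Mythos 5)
Your proof is correct and follows the route the paper intends: the corollary is stated without proof as an immediate consequence of the preceding theorem, and your argument — identifying non‑reduced categorical homology with $\Sym$ via \cref{free} and \cref{leoopi}, noting that the free stratified simplicial commutative monoid functor preserves weak equivalences by \cref{Symsoi} and the right‑induced model structure, and matching the descended left adjoint with $\Sym$ by uniqueness of adjoints — is precisely that consequence spelled out. The only superfluous point is the appeal to cofibrancy in the second step: since $\Free$ preserves all weak equivalences, no derived‑functor bookkeeping is needed.
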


We obtain the following:

\begin{corollary}\label{Orientmodel} Let $n \geq 0.$
The stratified simplicial commutative monoid
$\bN[\Delta^n]$ is a model for the categorical homology of $\bDelta^n.$

\end{corollary}

\begin{proof}

The canonical map $\Delta^n \to \N(\bDelta^n)$ is an equivalence by .... So it induces a weak equivalence
$$\bN[\Delta^n] \to \bN[\N(\bDelta^n)] $$
since $\bN[-]: \Strat \to \Cmon(\Strat)$
is a left Quillen functor by \cref{Strattrans}.
We apply \cref{commodel}.

\end{proof}

\subsection{Categorical homology of the globes}


In this section we compute the categorical homology of the disks (\cref{disk1}, \cref{disk2}). We obtain that categorical homology can distinguish disks of different dimension.

\begin{notation}
	
Let $\N: \infty\Cat \to \Fun(\Theta^\op, \infty\Grp)$ be the restricted Yoneda embedding.

\end{notation}

\begin{remark}

The functor $\N: \infty\Cat \to \Fun(\Theta^\op, \infty\Grp)$
preserves small filtered colimits by compactness of objects of $\Theta$ and trivially preserves small coproducts.

\end{remark}

\begin{lemma}\label{helpou}
	
Let $\bk, \n \geq 0$.
There is a canonical isomorphism of $\Theta$-sets
$$\N((\bD^\bk)^{\vee \n}) \to \N(\bD^\bk)_{\Sigma_\n}^{\times \n} .$$
	
\end{lemma}

\begin{proof}

For every $ 1 \leq \ell \leq \n$ let $$\alpha_\ell: \bD^\bk \simeq \{1\}^{\times \ell-1} \times \bD^\bk \times  \{0\}^{\times \n- \ell} \hookrightarrow (\bD^\bk)^{\times \n} $$ be the canonical functor.
Let $\alpha: (\bD^\bk)^{\vee \n} \to (\bD^\bk)^{\times \n}$ be the functor that is the functor $\alpha_\ell$ at the $\ell$-th summand.
We prove that the functor \begin{equation}\label{ertk}
\N((\bD^\bk)^{\vee \n}) \xrightarrow{\N(\alpha)} \N((\bD^\bk)^{\times \n}) \simeq  \N(\bD^\bk)^{\times \n} \to \N(\bD^\bk)_{\Sigma_\n}^{\times \n}\end{equation} is degreewise a bijection.
We first prove that the latter map admits a left inverse.

For every $ 1 \leq \ell \leq \n$ let $$\alpha'_\ell: \bD^\bk \simeq \{1\}^{\boxtimes \ell-1} \boxtimes \bD^\bk \times  \{0\}^{\boxtimes \n- \ell} \hookrightarrow (\bD^\bk)^{\boxtimes \n} $$ be the canonical functor.
Let $\alpha': (\bD^\bk)^{\vee \n} \to (\bD^\bk)^{\boxtimes \n}$ be the functor that is the functor $\alpha_\ell$ at the $\ell$-th summand.
The functor $\alpha: (\bD^\bk)^{\vee \n} \to (\bD^\bk)^{\times \n}$ 
factors as $$ (\bD^\bk)^{\vee \n} \xrightarrow{\alpha'} (\bD^\bk)^{\boxtimes \n} \to (\bD^\bk)^{\times \n}$$
since the Gray-tensor product and product share the tensor unit.
For every $\bk \geq 0$ the functor $\bD^\bk \to \cube^\bk$ taking the unique non-degenerate $\bk$-morphisms is a section of the canonical functor $ \cube^\bk \to \bD^\bk$.
For every $ 1 \leq \ell \leq \n$ let $$\beta_\ell: \cube^\bk \simeq \{1\}^{\boxtimes \ell-1} \boxtimes \cube^\bk \times \{0\}^{\boxtimes \n- \ell} \hookrightarrow (\cube^\bk)^{\boxtimes \n} $$ be the canonical functor.
Let $\beta: (\cube^\bk)^{\vee \n} \to (\cube^\bk)^{\boxtimes \n}$ be the functor that is the functor
$\beta_\ell$ at the $\ell$-th summand.
The functor $\alpha': (\bD^\bk)^{\vee \n} \to (\bD^\bk)^{\boxtimes \n}$ 
is the restriction of $\beta.$ 
By \cite[Theorem 2.1.]{campion2022cubesdenseinftyinftycategories} the functor $\beta$ admits a left inverse. Thus also $\alpha'$ admits a left inverse $\phi: (\bD^\bk)^{\boxtimes \n} \to (\bD^\bk)^{\vee \n} $
given by the restriction of the left inverse of $\beta$ to $(\bD^\bk)^{\boxtimes \n}$ followed by 
the functor $(\cube^\bk)^{\vee \n} \to (\bD^\bk)^{\vee \n}.$ 
By construction and \cite[Corollary 4.5.6.]{gepner2025oriented} the functor $\phi$ factors through the canonical functor $(\bD^\bk)^{\boxtimes \n} \to (\bD^\bk)^{\times \n}$ by a functor $\phi': (\bD^\bk)^{\times \n} \to (\bD^\bk)^{\vee \n}.$
So $\phi'$ is a left inverse of the functor $$ \alpha: (\bD^\bk)^{\vee \n} \to (\bD^\bk)^{\boxtimes \n} \to (\bD^\bk)^{\times \n}.$$
The functor $\phi'$ is $\Sigma_\n$-equivariant for the permutation action on the source and trivial action on the target, and so induces a map 
$$\psi: \N(\bD^\bk)^{\times \n}_{\Sigma_\n} \to \N((\bD^\bk)^{\vee \n}).$$
So $\psi $ is a left inverse of the map (\ref{ertk}). We prove next that $\psi$ is degree-wise surjective.

Let $\Xi \subset (\bD^\bk)^{\times \n}$ be the subcategory whose $\ell$-morphisms for $0 \leq \ell \leq \bk$ are the $\n$-tuples $(X^1,...,X^\n)$ of $\ell$-morphisms in $\bD^\bk$ such that there are $ i_0 \leq  ... \leq i_\ell \leq j_\ell \leq ... \leq j_0 $  
such that $(X^{i_{\ell}+1}, ..., X^{j_{\ell}})$ consists of non-degenerate $\ell$-morphisms in $\bD^\bk$ corresponding to an increasing sequence in $\{0,1\}$ if $\ell < \bk$ and for every $ 0 \leq \m < \ell$ the family $$(X^{i_\m+1}, ..., X^{j_\m}) \setminus (X^{i_{\m+1}+1}, ..., X^{j_{\m+1}}) $$ consists of non-degenerate $\m$-morphisms (viewed as $\ell$-morphisms) and such that source and target of the family $(X^{i_\m+1}, ..., X^{j_\m})$ consist of non-degenerate $\m-1$-morphisms
(viewed as $\ell-1$-morphisms) that correspond to an increasing sequence in $\{0,1\}$.

For every $1 \leq i \leq \n$ the functor $\alpha_i$ lands in $\Xi. $ 
Moreover for every $ 0 \leq \ell \leq k$ and $\ell$-morphism in $\Xi$ corresponding to an $\n$-tuple $ (X^1,...,X^\n) $ of $\ell$-morphisms of $\bD^\bk$ we have
$$ (X^1,...,X^\n) = (X_1 , 1 ,..., 1)  \circ ... \circ (0, ...0, X^{\n-2}, 1, 1) \circ (0, ...0, X^{\n-1}, 1) \circ (0 , ...,0, X^\n) $$$$ = \alpha_1(X^1) \circ \alpha_{\n-1}(X^{\n-1})  \circ  \alpha_\n(X^\n).$$
Thus every $\ell$-morphism of $\Xi$ is in the image of $(\bD^\bk)^{\vee \n}.$
Hence the inclusion $\alpha: (\bD^\bk)^{\vee \n} \to (\bD^\bk)^{\times \n}$ induces an equivalence
$ (\bD^\bk)^{\vee \n} \simeq \Xi. $
So it suffices to show that the degree-wise injection $$\N(\Xi) \subset \N((\bD^\bk)^{\times \n}) \simeq  \N(\bD^\bk)^{\times \n} \to \N(\bD^\bk)_{\Sigma_\n}^{\times \n}$$ is degree-wise surjective.

So we have to see that for every $\theta \in \Theta$ and $(X^1,...,X^\n) : \theta \to (\bD^\bk)^{\times \n}$ there is a $\sigma \in \mathrm{Aut}(\{1,...,\n\})$ such that $(X^{\sigma(1)},...,X^{\sigma(\n)}) : \theta \to (\bD^\bk)^{\times \n}$ lands in $\Xi.$
As the truncation of an object of $\Theta$ belongs to $\Theta$, we can assume that $\theta$ is a $\bk$-category.

By induction on the number of disks appearing in a pushout decomposition of objects of $\Theta$, it suffices to show the following:
for every $\theta \in \Theta$ or $\theta$ the empty category, naturals $a,\ell \geq 0$, inclusions $\bD^\ra \to \bD^\ell, \bD^\ra \to \theta$ and $(X^1,...,X^\n) : \theta \coprod_{\bD^\ra} \bD^\ell \to (\bD^\bk)^{\times \n}$ such that $(X^1_{\mid \theta},..., X^\n_{\mid \theta}) : \theta \to (\bD^\bk)^{\times \n}$
lands in $\Xi$, there is a $\sigma \in \mathrm{Aut}(\{1,...,\n\})$ such that
$(X^{\sigma(1)}_{\mid \bD^\ell},..., X^{\sigma(\n)}_{\mid \bD^\ell}) : \bD^\ell \to (\bD^\bk)^{\times \n}$
lands in $\Xi$ and $$(X^{\sigma(1)}_{\mid \theta},...,X^{\sigma(\n)}_{\mid \theta}) = (X^1_{\mid \theta},..., X^\n_{\mid \theta}) : \theta \to (\bD^\bk)^{\times \n}.$$
In this case $ (X^{\sigma(1)}, ..., X^{\sigma(\n)}) : \theta \coprod_{\bD^\ra} \bD^\ell \to (\bD^\bk)^{\times \n}$ lands in $\Xi$.
This implies the claim since every object of $\Theta$ has finitely many objects.
By induction on dimension it is enough to prove the following statement:
for every $\theta \in \Theta$ or $\theta$ the empty category, naturals $\ra,\ell \geq 0, 0 \leq c \leq \ell $, inclusions $\bD^\ra \to \bD^\ell, \bD^\rc \to \bD^\ell, \bD^\ra \to \theta$ and $(X^1,...,X^\n) : \theta \coprod_{\bD^\ra} \bD^\ell \to (\bD^\bk)^{\times \n}$ such that $(X^1_{\mid \theta},..., X^\n_{\mid \theta}) : \theta \to (\bD^\bk)^{\times \n}$ and $(X^1_{\mid\partial \bD^{\rc}},..., X^\n_{\mid \partial\bD^{c}}) : \partial\bD^{c} \to (\bD^\bk)^{\times \n}$ land in $\Xi$, there is a $\sigma \in \mathrm{Aut}(\{1,...,\n\})$ such that the functor
$(X^{\sigma(1)}_{\mid \bD^\rc},..., X^{\sigma(\n)}_{\mid \bD^c}) : \bD^c \to (\bD^\bk)^{\times \n}$
lands in $\Xi$ and $$ (X^{\sigma(1)}_{\mid \theta},...,X^{\sigma(\n)}_{\mid \theta}) = (X^1_{\mid \theta},..., X^\n_{\mid \theta}), \ (X^{\sigma(1)}_{\mid \partial\bD^{c}},...,X^{\sigma(\n)}_{\mid \partial\bD^{c}}) = (X^1_{\mid \partial\bD^{c}},..., X^\n_{\mid \partial\bD^{c}}).$$
Consequently, we can assume that $c=\ell.$ We complete the proof by verifying the latter statement.
For $\ell=0$ there is nothing to show.

Let $(X^1,...,X^\n) : \theta \to (\bD^\bk)^{\times \n}$ such that $$(X^1_{\mid \theta},..., X^\n_{\mid \theta}) : \theta \to (\bD^\bk)^{\times \n}, \ (X^1_{\mid\partial \bD^{\ell}},..., X^\n_{\mid \partial\bD^{\ell}}) : \partial\bD^{\ell} \to (\bD^\bk)^{\times \n}$$ land in $\Xi$.
Then there are $ i_0 \leq  ... \leq i_\ell \leq j_\ell \leq ... \leq j_0 $  such that $(X_{\mid \bD^\ell}^{i_{\ell}+1}, ..., X_{\mid \bD^\ell}^{j_{\ell}})$ consists of non-degenerate $\ell$-morphisms in $\bD^\bk$, which have all a common source and a common target, 
and for every $ 0 \leq \m < \ell$ the family $$(X^{i_\m+1}, ..., X^{j_\m}) \setminus (X^{i_{\m+1}+1}, ..., X^{j_{\m+1}}) $$ consists of non-degenerate $\m$-morphisms (viewed as $\ell$-morphisms) and such that the source and target of the family $(X^{i_\m+1}, ..., X^{j_\m})$ consists of non-degenerate $\m$-1-morphisms (viewed as $\ell-1$-morphisms) that correspond to an increasing sequence in $\{0,1\}$.
Then trivially there is a $\sigma \in \mathrm{Aut}(\{1,...,\n\})$ such that 
$\sigma$ is the identity on $ \{1,...,\n\}\setminus \{i_\ell +1, ..., j_\ell\}$ and 
$(X^{\sigma(i_{\ell}+1)}, ..., X^{\sigma(j_{\ell})})$ in an increasing (possibly empty) family.
Since $(X^{i_{\ell}+1}, ..., X^{j_{\ell}})$ have all a common source and common target, we find that $$(X_{\mid \partial\bD^{\ell}}^{\sigma(i_{\ell}+1)}, ..., X_{\mid \partial\bD^{\ell}}^{\sigma(j_{\ell})})= 
(X_{\mid \partial\bD^{\ell}}^{i_{\ell}+1}, ..., X_{\mid \partial\bD^{\ell}}^{j_{\ell}})$$ and so $$(X_{\mid \partial\bD^{\ell}}^{\sigma(1)}, ..., X_{\mid \partial\bD^{\ell}}^{\sigma(\n)})= 
(X_{\mid \partial\bD^{\ell}}^{1}, ..., X_{\mid \partial\bD^{\ell}}^{\n}).$$
 
Moreover note that we could have assumed that $ i_0 <  ... < i_\ell < j_\ell < ... < j_0 $:
indeed if $i_\m = j_\m$ for some $1 \leq \m \leq \ell$, then $i_t = j_t$ for every $m \leq \rt \leq \ell$
and so $i_\ell = j_\ell$ so that the set $ \{i_\ell+1, ...,j_\ell \}$ is empty and we could have chosen
$\sigma$ to be the identity.
It remains to see that 
$(X_{\mid\theta}^{\sigma(i_\ell+1)}, ..., X_{\mid\theta}^{\sigma(j_\ell)})= (X_{\mid\theta}^{i_\ell+1}, ..., X_{\mid\theta}^{j_\ell})$ and so $(X_{\mid\theta}^{\sigma(1)}, ..., X_{\mid\theta}^{\sigma(\n)})= (X_{\mid\theta}^{1}, ..., X_{\mid\theta}^{\n})$.
For this we can assume that $ i_0 <  ... < i_\ell < j_\ell < ... < j_0 $.
Then otherwise, we could choose $\sigma$ to be the identity.
We have to prove that for every disk $\bD^\rt$ for $0 \leq \br \leq k$ appearing in the pushout decomposition of $\theta$ restriction along the canonical inclusion $\bD^\rt \to \theta$ associated to that disk gives the following identity:
$$(X_{\mid\bD^\rt}^{\sigma(i_\ell+1)}, ..., X_{\mid\bD^\rt}^{\sigma(j_\ell)})= (X_{\mid\bD^\rt}^{i_\ell+1}, ..., X_{\mid\bD^\rt}^{j_\ell}).$$

Note that any canonical inclusion $\bD^\rt \to \theta$ sends $\{0,1\}$ to the set of two subsequent objects. Moreover note that for every $ 0 \leq \rt \leq \br$ the induced inclusion
$\bD^{\br-\rt} \to \Mor^{\circ \rt}(\theta)$ sends $\{0,1\}$ to the set of two subsequent objects.

Let $ 0 \leq \rt \leq \br$. If $a \leq \rt$ or if the induced inclusion $\bD^{\br-\rt} \to \Mor^{\circ \rt}(\theta)$ sends 1 to an object $ i$ different from the last object of $\Mor^{\circ \rt}(\theta)$, 
for every $ i_\ell +1 \leq \m \leq j_\ell $ the functor $ \Mor^{\circ \rt}(X^{\m}_{\mid \bD^\rt}) : \bD^{\br-\rt} \to \bD^{\bk-\rt} $ sends 0 and 1 both to 0 because the functor $ \Mor^{\circ \rt}(X^\m_{\mid \theta}) :  \Mor^{\circ \rt}(\theta) \to \bD^{\bk-\rt} $  sends the object before the last object to 0 and so sends every object smaller than the last object to 0.
Therefore for every $ i_\ell +1 \leq \m \leq j_\ell $ the functor $\Mor^{\circ \rt}(X^\m_{\mid \bD^\rt}) : \bD^{\br-\rt} \to \bD^{\bk-\rt} $ is the constant functor at 0. So all functors in the family $(X^{i_\ell+1}_{\mid \bD^\rt}, ..., X^{j_\ell}_{\mid \bD^\rt})$ agree and we have proven the statement if for every $\s < \rt$ the functor $\Mor^{\circ \s}(X^\m_{\mid \bD^\rt}) : \bD^{\br-\s} \to \bD^{\bk-\s} $
sends 0 to 0 and 1 to 1.
If the induced inclusion $\bD^{\br-\rt} \to \Mor^{\circ \rt}(\theta)$ sends 1 to the last object, the functor $\Mor^{\circ \rt}(X^\m_{\mid \bD^{\br}}) : \bD^{\br-\rt }\to \bD^{\bk-\rt} $ sends 0 to 0 and 1 to 1.

We use the latter to finish the proof: if $\ra=0$ or the induced inclusion $\bD^{\br} \to \theta$ sends 1 to an object different from the last object of $\theta$, we have proven the statement.
Otherwise the induced inclusion $\bD^{\br} \to \theta$ sends 1 to the last object and the functor $X^\m_{\mid \bD^{\br}} : \bD^{\br}\to \bD^{\bk} $ sends 0 to 0 and 1 to 1 and so induces
a functor $\Mor(X^\m_{\mid \bD^{\br}}) : \bD^{\br-1}\to \bD^{\bk-1} $.
If $\ra=1$ or the induced inclusion $\bD^{\br-1} \to \Mor(\theta)$ sends 1 to an object different from the last object of $\Mor(\theta)$, we have proven the statement.
Otherwise the induced inclusion $\bD^{\br-1} \to \Mor(\theta)$ sends 1 to the last object and the functor $\Mor(X^\m_{\mid \bD^{\br}}) : \bD^{\br-1}\to \bD^{\bk-1} $ sends 0 to 0 and 1 to 1 and so induces
a functor $\Mor^2(X^\m_{\mid \bD^{\br}}) : \bD^{\br-2}\to \bD^{\bk-2} $.
So we continue $\ra-1 $-many steps and obtain the result from $\ra=\ra$.
 
\end{proof}

\begin{corollary}
Let $\bk, \n \geq 0$. There is a canonical equivalence in $\Fun(\Theta^\op, \infty\Grp): $$$ \D^\n \circ \N(\bD^\bk)\simeq \N((\bD^\bk)^{\vee \n}).$$
	
\end{corollary}

\begin{corollary} \label{coufo}
Let $\bk \geq 0.$ There are canonical equivalences in $\Fun(\Theta^\op, \infty\Grp): $
	
\begin{enumerate}
\item $$ \coprod_{\n \geq 0} \D^\n \circ \N(\bD^\bk)\simeq \N(\coprod_{\n \geq 0} (\bD^\bk)^{\vee \n}) .$$
\item $$ \colim_{\n \geq 0} \D^\n \circ \N(\bD^\bk) \simeq\N(\colim_{\n \geq 0} (\bD^\bk)^{\vee \n}).$$
\end{enumerate}	
\end{corollary}

We obtain the following:

\begin{theorem}\label{disk1}

Let $\bk \geq 0.$ The following $\infty$-categories are equivalent:

\begin{enumerate}
\item The categorical homology of $\bD^\bk_+$, i. e. $ \Omega^\infty(H(\bN)\wedge \bD_+^\bk)$.

\item The free strict commutative monoid in $\Fun(\Theta^\op,\infty\Grp) $ generated by the nerve of $\bD^\bk$.

\item The free strict symmetric monoidal $\infty$-category generated by $\bD^\bk$.

\item The free strict symmetric monoidal strict $\infty$-category generated by $\bD^\bk$.

\item The coproduct $ \coprod_{\n \geq 0} (\bD^\bk)^{\vee \n}$

\end{enumerate}

\end{theorem}

\begin{proof}
By 	\cref{theuta} the objects of (1) and (3) are equivalent. By 	\cref{free} the categorical homology of $\bD^\n_+$ is the coproduct $ \coprod_{\n \geq 0} \D^\n(\bD^\bk),$ which by definition is the localization of $ \coprod_{\n \geq 0} \D^\n \circ \N(\bD^\bk),$
where $$\N: \infty\Cat \to \Fun(\Theta^\op, \infty\Grp)$$ is the restricted Yoneda embedding. 

By \cref{coufo} the presheaf $ \coprod_{\n \geq 0} \D^\n \circ \N(\bD^\bk)$ is equivalent to $ \N(\coprod_{\n \geq 0} (\bD^\bk)^{\vee \n}) $ and so is local. Thus the localization of $$ \coprod_{\n \geq 0} \D^\n \circ \N(\bD^\bk) \simeq \N(\coprod_{\n \geq 0} (\bD^\bk)^{\vee \n})$$ is $\coprod_{\n \geq 0} (\bD^\bk)^{\vee \n} $ whose image under $\N$ is $ \coprod_{\n \geq 0} \D^\n \circ \N(\bD^\bk), $ which by \cref{polujk} is the free strict commutative monoid on $\N(\bD^\bk).$ So the objects of (1) and (5) and (2) are equivalent.

The free strict symmetric monoidal strict $\infty$-category on $\bD^\bk$ is 
the localization of $$ \coprod_{\n \geq 0} \D^\n \circ \N(\bD^\bk)\simeq \coprod_{\n \geq 0} \N(\bD^\bk)^{\times \n}_{\Sigma_\n}\simeq \coprod_{\n \geq 0} \N((\bD^\bk)^{\times \n})_{\Sigma_\n}  \in \Fun(\Theta^\op, \Set) $$
with respect to the localization $  \Fun(\Theta^\op, \Set)  \to \infty\Cat^\strict. $ 
By \cref{coufo} the presheaf $$ \coprod_{\n \geq 0} \D^\n \circ \N(\bD^\bk) \in \Fun(\Theta^\op, \Set) $$ is $ \N(\coprod_{\n \geq 0} (\bD^\bk)^{\vee \n})$ and so is local. Thus the free strict symmetric monoidal strict $\infty$-category on $\bD^\bk$ is $ \coprod_{\n \geq 0} (\bD^\bk)^{\vee \n} $. So the objects of (4) and (5) are equivalent.
 
\end{proof}

\begin{theorem}\label{disk2}
Let $\bk \geq 0.$ The following $\infty$-categories are equivalent:

\begin{enumerate}
\item The categorical homology of $\bD^\bk$, i. e. $ \Omega^\infty(H(\bN)\wedge \bD^\bk)$.

\item The free reduced strict commutative monoid in $\Fun(\Theta^\op,\infty\Grp) $ generated by the nerve of $\bD^\bk$.

\item The free reduced strict symmetric monoidal $\infty$-category generated by $\bD^\bk$.

\item The free reduced strict symmetric monoidal strict $\infty$-category generated by $\bD^\bk$.

\item The sequential colimit of the diagram $$ \bD^0 \to \bD^\bk \to ... \to (\bD^\bk)^{\vee \n} \to ... $$

\end{enumerate}

\end{theorem}

\begin{proof}
By 	\cref{theuta} the objects of (1) and (3) are equivalent. By 	\cref{redfree} the categorical homology of $\bD^\bk$ is the sequential colimit $ \colim_{\n \geq 0} \D^\n(\bD^\bk),$ which by definition is the localization of $ \colim_{\n \geq 0} \D^\n \circ \N(\bD^\bk),$
where $\N: \infty\Cat \to \Fun(\Theta^\op, \infty\Grp)$ is the restricted Yoneda embedding. 
By \cref{coufo} the presheaf $ \colim_{\n \geq 0} \D^\n \circ \N(\bD^\bk)$ is equivalent to $ \N( \colim_{\n \geq 0} (\bD^\bk)^{\vee \n})$  and so is local.
Hence the categorical homology of $\bD^\n$ is $ \colim_{\n \geq 0} (\bD^\bk)^{\vee \n} $ whose image under $\N$ is $ \colim_{\n \geq 0} \D^\n \circ \N(\bD^\bk), $ which by \cref{polujk} is the free reduced strict commutative monoid on $\N(\bD^\bk).$ So the objects of (1) and (5) and (2) are equivalent.

The free reduced strict symmetric monoidal strict $\infty$-category on $\bD^\bk$ is 
the localization of $$ \colim_{\n \geq 0} \D^\n \circ \N(\bD^\bk) \simeq \colim_{\n \geq 0} \N(\bD^\bk)^{\times \n}_{\Sigma_\n}  \simeq \colim_{\n \geq 0} \N((\bD^\bk)^{\times \n})_{\Sigma_\n}  \in \Fun(\Theta^\op, \Set) $$
with respect to the localization $  \Fun(\Theta^\op, \Set) \to \infty\Cat^\strict. $ 
By \cref{coufo} the presheaf $$ \colim_{\n \geq 0} \D^\n \circ \N(\bD^\bk) \in \Fun(\Theta^\op, \Set) $$ is $ \N( \colim_{\n \geq 0} (\bD^\bk)^{\vee \n})$ and so is local. Hence the free strict symmetric monoidal strict $\infty$-category on $\bD^\bk$ is $ \colim_{\n \geq 0} (\bD^\bk)^{\vee \n} $. So the objects of (4) and (5) are equivalent.	
\end{proof}

\subsection{A categorical cup product}

In the following we construct a higher-categorical version of the cup product on the categorical cohomology of the oriented simplices.

\begin{proposition}\label{coalgos}

Let $n \geq 0.$
The oriented $n$-simplex $\bDelta^n$ carries a canonical coalgebra structure for $(\infty\Cat, \boxtimes),$ which is functorial in 
$[\n] \in \Delta.$


\end{proposition}

\begin{proof}

By \cite[Example 3.8.]{Steiner2004} the oriented $n$-simplex $\bDelta^n$ is a Steiner $\infty$-category (see \cite[Definition 1.27.]{ARA2023107313} for details on Steiner $\infty$-categories).
Let $\infty\Cat^{\Steiner}$ be the full subcategory of the category $\infty\Cat $ spanned by the Steiner $\infty$-categories. 
By \cite[Proposition 3.2.13.]{gepner2025oriented} the category $\infty\Cat^{\Steiner}$ is a $(1,1)$-category.
By \cite[Theorem 3.8.1.]{gepner2025oriented} the Gray tensor product of $\infty\Cat$ restricts to $\infty\Cat^{\Steiner}$. So it suffices to refine $\bDelta^n$ to a coalgebra in $\infty\Cat^{\Steiner}$. 

An augmented directed chain complex is an augmented connective chain complex of abelian groups together with a graded submonoid (see \cite[Definition 2.2.]{Steiner2004} for details). 
An augmented directed chain map is an augmented chain map preserving the graded submonoids.
Let $\A\D\C $ be the $(1,1)$-category of augmented directed chain complexes and augmented directed chain maps.
For every simplicial set $X$ the normalized chains $\C(X)$ refine to an augmented directed complex $ \bar{\C}(X)$ whose graded submonoid is free on $X.$

By \cite[Example 3.10.]{Steiner2004} the $(1,1)$-category $\A\D\C $ carries a canonical monoidal structure
such that the forgetful functor $\A\D\C \to \Ch_{\geq 0}$ is monoidal.
The graded submonoid of the tensor product is the image of the 
tensor product of graded commutative submonoids in the tensor product of graded abelian groups.
By \cite[Theorem 5.11., Proposition 7.2., Example 3.10.]{Steiner2004} there is a canonical monoidal embedding
$\lambda: \infty\Cat^{\Steiner} \to \A\D\C.$ 

Consequently, it suffices to refine the image $\lambda(\bDelta^n)$ in $\A\D\C$ to a coalgebra in $\A\D\C$. 
This image is $\bar{\C}(\Delta^n).$ The normalized chains functor $\C: \s\Set \to \Ch_{\geq 0}$
refines the composition of the free simplicial abelian group functor $\s\Set \to \s\Ab$ followed by the normalized chains functor $ \s\Ab \to \Ch_{\geq 0}.$
The free simplical abelian group functor $\s\Set \to \s\Ab$ is monoidal and the normalized chains functor $\s\Ab \to \Ch_{\geq 0}$
is oplax monoidal via the Alexander-Whitney map and preserves the tensor unit.
Hence the chains functor $\C: \s\Set \to \s\Ab$ is oplax monoidal
and preserves the tensor unit, and so also the functor $\bar{\C}: \s\Set \to \A\D\C$ is oplax monoidal and preserves the tensor unit
since the Alexander-Whitney map is an augmented directed chain map. 
Hence for every simpicial set $X$, viewed as coalgebra for the cartesian product, the chains $\bar{\C}(X)$ is a coalgebra in $\A\D\C$, which is functorial in $X \in \sSet$.
So $\bar{\C}(\Delta^n)$ is functorial in $\bDelta^n \in \sSet$ for $[n] \in \Delta.$
The functor $ \Delta \to \A\D\C, [\n] \mapsto \bar{\C}(\Delta^n) $
identifies with the functor
$$ \bDelta \to \infty\Cat^{\Steiner} \xrightarrow{\lambda} \A\D\C, \ [\n] \mapsto \lambda(\bDelta^n).$$

\end{proof}

\begin{corollary}\label{homolalg}

Let $n \geq 0.$
The non-reduced categorical homology $ H(\bN) \wedge \bDelta^n_+ $ of $\bDelta^n$ refines to a coalgebra in $$ H(\bN)\mathrm{-}\Mod(\Cat\Sp).$$

\end{corollary}

\begin{proof}
By \cref{catmonoidal} the functor $H(\bN) \wedge (-)_+: \infty\Cat \to H(\bN)\mathrm{-}\Mod(\Cat\Sp)$ is monoidal and so preserves the coalgebra structure on $\bDelta^n$ of \cref{coalgos}.

\end{proof}

\begin{definition}Let $R$ be a rig and $X$ an $\infty$-category.
The categorical $R$-cohomology of $X$ is 
$$ \R\Mor_{\Cat\Sp}(X, H(R)) \simeq \R\Mor_{H(R)\mathrm{-}\Mod(\Cat\Sp)}(H(R) \wedge X, H(R)). $$
    
\end{definition}

We obtain the following cup product on categorical cohomology of an oriented simplex.

\begin{corollary}\label{homolcoalg}

Let $n \geq 0.$
The non-reduced categorical cohomology of $\bDelta^n$ refines to an algebra in $$H(\bN)\mathrm{-}\Mod(\Cat\Sp).$$

\end{corollary}

\begin{proof}

For every closed monoidal category $\mV$
the functor $\R\Mor_\mV(-,\tu_\mV): \mV^\op \to \mV^\rev$ is lax monoidal.
We apply this to the presentably monoidal category 
$H(\bN)\mathrm{-}\Mod(\Cat\Sp)$ of \cref{catmonoidal} whose tensor unit is $H(\bN).$
The resulting lax monoidal functor $$ \R\Mor_{H(\bN)\mathrm{-}\Mod(\Cat\Sp)}(-,H(\bN)): H(\bN)\mathrm{-}\Mod(\Cat\Sp)^\op \to H(\bN)\mathrm{-}\Mod(\Cat\Sp)^\rev$$ transports the coalgebra structure on $ H(\bN) \wedge \bDelta^n_+ $ in $H(\bN)\mathrm{-}\Mod(\Cat\Sp).$

\end{proof}




\bibliographystyle{plain}
\bibliography{ma}




\end{document}